\newtheorem{tm}{tm}[section]
\newtheorem{theorem}[tm]{Theorem}
\newtheorem{lemma}[tm]{Lemma}
\newtheorem{corollary}[tm]{Corollary}
\newtheorem{proposition}[tm]{Proposition}
\newtheorem{definition}[tm]{Definition}
\newtheorem{remark}[tm]{Remark}
\newcommand{\chain}[1]{\{#1_n\}_{n\geq0}}
\newcommand{\process}[1]{\{#1_t\}_{t\geq0}}
\newcommand {\R} {\ensuremath{\mathbb{R}}}
\newcommand {\ZZ} {\ensuremath{\mathbb{Z}}}
\newcommand {\N} {\ensuremath{\mathbb{N}}}
\newcommand {\CC} {\ensuremath{\mathbb{C}}}
\numberwithin{equation}{section}
\def\be{\begin{equation}}
\def\ee{\end{equation}}
\begin{document}

 \title{Recurrence and transience criteria for two
cases of  stable-like Markov chains}
 \author{Nikola Sandri\'{c}\\ Department of Mathematics\\
         Faculty of Civil Engineering, University of Zagreb, Zagreb,
         Croatia \\
         Email: nsandric@grad.hr }

 \maketitle
\begin{center}
{
\medskip

} \end{center}

\begin{abstract}
 We give  recurrence and transience criteria for two
cases of time-homogeneous  Markov chains on  the real line with
transition kernel $p(x,dy)=f_x(y-x)dy$, where $f_x(y)$ are
probability densities of symmetric distributions and, for large
$|y|$, have a power-law decay with exponent $\alpha(x)+1$, with
$\alpha(x)\in(0,2)$.

 If $f_x(y)$ is the  density of a symmetric $\alpha$-stable distribution for negative $x$ and
 the density  of a symmetric $\beta$-stable distribution
for non-negative $x$, where $\alpha,\beta\in(0,2)$, then the chain
is recurrent if and only if $\alpha+\beta\geq2.$

If the function $x\longmapsto f_x$ is  periodic and if the set
$\{x:\alpha(x)=\alpha_0:=\inf_{x\in\R}\alpha(x)\}$ has positive
Lebesgue measure,  then, under a uniformity condition on the
densities $f_x(y)$ and some mild technical conditions, the chain is
recurrent if and only if $\alpha_0\geq1.$

\end{abstract}

{\small \textbf{Keywords and phrases:} characteristics of
semimartingale, Feller process, Harris recurrence, Markov chain,
Markov process, recurrence, stable distribution, stable-like
process, T-model, transience}

%
%
%
%


\section{Introduction}

Let $(\Omega,\mathcal{F},\mathbb{P})$ be a probability space and let
$\{Z_n\}_{n\in\N}$ be a sequence of i.i.d.  random variables on
$(\Omega,\mathcal{F},\mathbb{P})$ taking values in $\R^{d}$. Let us
define $S_n:=\sum_{i=1}^{n}Z_i$ and $S_0:=0$. The sequence
$\{S_n\}_{n\geq0}$ is called a \emph{random walk} with jumps
$\{Z_n\}_{n\in\N}$. The random walk $\{S_n\}_{n\geq0}$ is said to be
\emph{recurrent} if
$$\mathbb{P}\left(\liminf_{n\longrightarrow\infty}|S_n|=0\right)=1,$$ and  \emph{transient} if $$\mathbb{P}\left(\lim_{n\longrightarrow\infty}|S_n|=\infty\right)=1.$$
It is well known that every random walk is either recurrent or
transient (see   \cite[Theorem 4.2.1]{durrett}).  In the case $d=1$,
a symmetric $\alpha$-stable random walk, i.e., a random walk with
jump distribution with characteristic function
$\varphi(\xi)=\exp(-\gamma|\xi|^{\alpha})$, where $\alpha\in(0,2]$
and $\gamma\in(0,\infty)$, is recurrent if and only if $\alpha\geq1$
(see the discussion after \cite[Lemma 4.2.12]{durrett}). For
recurrence and transience properties of random walks  see
\cite{chung, durrett}.  In this paper we generalize one-dimensional
symmetric $\alpha$-stable random walks in the way that the index of
stability of  the jump distribution depends on the current position,
and we study the  recurrence and transience property of the
generalization.

From now on, using the notation from \cite{taqqu}, we will write
S$\alpha$S for the one-dimensional symmetric $\alpha$-stable
distribution. Let us denote by $\mathcal{B}(\R)$ the Borel
$\sigma$-algebra on $\R$ and by $\lambda(\cdot)$ the Lebesgue
measure on $\mathcal{B}(\R)$. Furthermore, let us introduce the
notation $f(y)\sim g(y),$ when $y\longrightarrow y_0$, for
$\lim_{y\longrightarrow y_0}f(y)/g(y)=1,$ where
$y_0\in[-\infty,\infty]$.  Recall that if $f(y)$ is the density  of
S$\alpha$S distribution with characteristic function
$\varphi(\xi)=\exp(-\gamma |\xi|^{\alpha})$, where $\alpha\in(0,2)$
and $\gamma\in(0,\infty)$, then
$$f(y)\sim c_{\alpha}|y|^{-\alpha-1},$$ when
$|y|\longrightarrow\infty,$ where $c_1=\frac{\gamma}{2}$ and
$c_{\alpha}=\frac{\gamma}{\pi}\Gamma(\alpha+1)\sin\left(\frac{\pi\alpha}{2}\right),$
for $\alpha\neq1,$ (see    \cite[Property 1.2.15]{taqqu}).

 Let $\alpha:\R\longrightarrow(0,2)$ and
$c:\R\longrightarrow(0,\infty)$ be arbitrary functions and let
$\{f_x\}_{x\in\R}$ be a family of probability densities on $\R$
satisfying:
\begin{enumerate}
  \item [(i)] $x\longmapsto f_x(y)$ is a Borel measurable function for all
  $y\in\R$ and
  \item [(ii)]$f_x(y)\sim c(x)|y|^{-\alpha(x)-1},$ for
$|y|\longrightarrow\infty$.
\end{enumerate}
We define a Markov chain $\chain{X}$ on $\R$   by the following
transition kernel
\begin{align}\label{eq:1.1}p(x,dy):=f_{x}(y-x)dy.\end{align}
Transition densities of the chain $\chain{X}$ are asymptotically
equivalent to the densities of symmetric stable distributions. We
call the Markov chain $\chain{X}$ a \emph{stable-like Markov chain}.

For Borel measurable functions  $\alpha:\R\longrightarrow(0,2)$ and
$\gamma:\R\longrightarrow(0,\infty)$,  let
$f_{(\alpha(x),\gamma(x))}(y)$ be the density  of
 a S$\alpha(x)$S distribution given by the following characteristic function $\varphi(x;\xi)=\exp(-\gamma(x)|\xi|^{\alpha(x)})$. A special case of the stable-like
 chain $\chain{X}$ is a Markov chain $\chain{X^{\alpha(x)}}$
given  by the following transition kernel
\begin{align}\label{eq:1.2}p(x,dy):=f_{(\alpha(x),\gamma(x))}(y-x)dy.\end{align}   The stable-like
chain $\chain{X^{\alpha(x)}}$ has state dependent stable jumps,
i.e., it jumps from the state $x$ by a S$\alpha(x)$S law.

The recurrence and transience problem for the stable-like chain
$\chain{X}$ (the chain given by (\ref{eq:1.1})) was already treated
in \cite{ja}. Using the Foster-Lyapunov drift criterion for
recurrence and transience of Markov chains, under a uniformity
condition on the densities $f_{x}(y)$ and some mild technical
conditions (see conditions (C1)-(C5) in \cite{ja}) it is proved that
if $\liminf_{|x|\longrightarrow\infty}\alpha(x)>1$, then the
stable-like chain $\chain{X}$
 is recurrent, and if
$\limsup_{|x|\longrightarrow\infty}\alpha(x)<1$, then the
stable-like chain $\chain{X}$
 is transient. Results in \cite{ja} give us only sufficient
conditions for recurrence and transience. In this paper we treat two
special cases of the  stable-like chain $\chain{X}$ not covered in
\cite{ja}, and  give their recurrence and transience criteria. For
recurrence and transience properties of Markov chains on general
state space see \cite{meyn}.

As already mentioned, we treat only two special cases of stable-like
chains:
\begin{enumerate}
  \item [(i)] Let $\alpha,\beta\in(0,2)$   and  $\gamma, \delta\in(0,\infty)$ be  arbitrary. Let $\chain{X^{(\alpha,\beta)}}$ be
a stable-like  chain  given by transition densities with following
characteristic functions
\begin{align}\label{eq:1.3}\varphi(x;\xi)=\left\{\begin{array}{cc}
                                                      \exp(-\gamma|\xi|^{\alpha}), & x<0 \\
                                                      \exp(-\delta|\xi|^{\beta}),&
                                                      x\geq0.
                                                    \end{array}\right.\end{align}
  \item [(ii)] Let $\alpha:\R\longrightarrow(0,2)$  and $c:\R\longrightarrow(0,\infty)$
  be arbitrary
    Borel measurable functions   and let $\{f_x\}_{x\in\R}$ be an arbitrary family of  probability
    densities
     on
$\R$ with $f_x(-y)=f_x(y)$ for all $x,y\in\R$. Furthermore, let us
assume that the function $x\longmapsto f_x$ is a periodic function
with period $\tau>0$ and that the following conditions are
satisfied:
\begin{description}
\item [\textbf{(PC1)}]the function $(x,y)\longmapsto f_x(y)$ is  continuous and strictly positive;
\item [\textbf{(PC2)}] $f_x(y)\sim c(x)|y|^{-\alpha(x)-1}$, when
$|y|\longrightarrow\infty$, for all $x\in\R$;
\item [\textbf{(PC3)}]
 $\displaystyle\lim_{|y|\longrightarrow\infty}\sup_{x\in[0,\tau]}\left|f_x(y)\frac{|y|^{\alpha(x)+1}}{c(x)}-1\right|=0;$
                          \item [\textbf{(PC4)}] $\displaystyle\inf_{x\in
                          [0,\tau]}c(x)>0$.
\end{description}
Let $\chain{X^{p}}$ be a stable-like chain, called a \emph{periodic
stable-like chain}, given by the transition kernel \be
\label{eq:1.4} p(x,dy):=f_x(y-x)dy.\ee
\end{enumerate}
Note that  $\tau$-periodicity of the function $x\longrightarrow f_x$
implies $\tau$-periodicity of the functions $\alpha(x)$ and $c(x)$.
Indeed, let $x\in\R$ be arbitrary, then, by (PC2), we have:
\begin{align*}1&=\lim_{|y|\longrightarrow\infty}f_{x+\tau}(y)\frac{|y|^{\alpha(x+\tau)+1}}{c(x+\tau)}\\&=\lim_{|y|\longrightarrow\infty}\left(f_{x}(y)\frac{|y|^{\alpha(x)+1}}{c(x)}\frac{c(x)}{c(x+\tau)}|y|^{\alpha(x+\tau)-\alpha(x)}\right)\\&=\frac{c(x)}{c(x+\tau)}\lim_{|y|\longrightarrow\infty}|y|^{\alpha(x+\tau)-\alpha(x)}.\end{align*}
Therefore,  both stable-like chains $\chain{X^{(\alpha,\beta)}}$ and
$\chain{X^{p}}$ satisfy  conditions (C1)-(C5) from \cite{ja}. In
particular, both stable-like chains $\chain{X^{(\alpha,\beta)}}$ and
$\chain{X^{p}}$ are irreducible with respect to the Lebesgue measure
(see \cite[Proposition 2.1]{ja}). Thus, we have
recurrence-transience dichotomy in both cases. Further, together
with the $\tau$-periodicity of the function $c(x)$,
 condition (PC3) implies
\be\label{eq:1.5}\sup_{x\in[0,\tau]}c(x)=\sup_{x\in\R}c(x)<\infty\ee
(see \cite[Remark 1.1]{ja}).

 From now on, we assume that the
stable-like chain $\chain{X}$ (the chain given by (\ref{eq:1.1}))
satisfies conditions (C1)-(C5). Note that, in general, this is not
the case for the stable-like chain $\chain{X^{\alpha(x)}}$  given by
(\ref{eq:1.2}) (for sufficient conditions see \cite[Proposition
5.5]{ja}). We refer the reader to \cite{ja} for more details about
conditions (C1)-(C5).

An example of the periodic stable like-chain $\chain{X^{p}}$
satisfying conditions (PC1)-(PC4) is given as follows: Let
$\alpha:\R\longrightarrow(0,2)$ be an arbitrary continuous periodic
function with period $\tau>0$ and define the family of density
functions $\{f_x\}_{x\in\R}$ on $\R$ by
$$f_x(y):=\left\{\begin{array}{cc}
                                                 \frac{1}{2}\frac{\alpha(x)}{\alpha(x)+1}     , & |y|\leq 1 \\
                                                     \frac{1}{2}\frac{\alpha(x)}{\alpha(x)+1}|y|^{-\alpha(x)-1} , &
                                                      |y|\geq 1
                                                    \end{array}\right.
                                                    $$ for all
                                                    $x\in\R.$ In
                                                    this case $c(x)=\frac{1}{2}\frac{\alpha(x)}{\alpha(x)+1}
                                                    $.

Now, let us state the main results of this paper:
\begin{theorem}\label{tm1.1}
The stable-like chain $\chain{X^{(\alpha,\beta)}}$ is recurrent if
and only if $\alpha+\beta\geq2.$
\end{theorem}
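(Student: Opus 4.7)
My plan is to apply the Foster--Lyapunov drift criteria for recurrence and transience of $\psi$-irreducible T-model Markov chains from \cite{meyn}. The text has already noted that $\chain{X^{(\alpha,\beta)}}$ satisfies conditions (C1)--(C5) of \cite{ja}, so the chain is Lebesgue-irreducible, is a T-model, and enjoys the recurrence--transience dichotomy. It therefore suffices to produce, in the regime $\alpha+\beta\ge 2$, a norm-like function $V$ with $(PV)(x)\le V(x)$ outside a compact set, and in the regime $\alpha+\beta<2$, a bounded non-constant function satisfying the dual drift condition of \cite[Chapter~8]{meyn}.

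For the recurrence direction I would try a ``two-sided'' ansatz $V(x)=a_-\,\phi_-(-x)$ for $x\le 0$ and $V(x)=a_+\,\phi_+(x)$ for $x\ge 0$, with $\phi_\pm$ of power or logarithmic type. The one-step drift
\[ (PV)(x)-V(x)=\int_{\R}\bigl(V(x+z)-V(x)\bigr)\,f_\bullet(z)\,dz, \]
where $f_\bullet$ is the S$\alpha$S density for $x<0$ and the S$\beta$S density for $x\ge 0$, is then estimated by splitting the integration into three regions: jumps staying on the same half-line with $|z|\le |x|/2$ (Taylor expansion of $V$ together with the truncated second moment $\int_{|z|\le |x|/2}z^2 f_\bullet(z)\,dz\asymp |x|^{2-\alpha}$ or $|x|^{2-\beta}$); far jumps $|z|>|x|/2$ staying on the same side (the tail $f_\bullet(z)\sim c|z|^{-\alpha-1}$ contributes explicit terms of order $|x|^{\gamma_\pm-\alpha}$); and jumps that cross zero (coupling the two sides via $\phi_\mp$). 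Balancing these contributions with $\gamma_\pm$ and $a_\pm$ tuned accordingly should make $(PV)(x)-V(x)$ non-positive for $|x|$ large precisely when $\alpha+\beta\ge 2$, with a logarithmic $V$ expected at the critical equality.

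For the transience direction I would apply the dual Foster-type criterion with a bounded candidate $V$ of the form $V(x)=B+a_-(-x)^{-\delta_-}$ for $x\le -R$ and $V(x)=B+a_+ x^{-\delta_+}$ for $x\ge R$, with a smooth interpolation on $[-R,R]$, choosing constants so that the reverse drift inequality holds outside a compact set and $\{V>\sup_{C}V\}$ is non-empty off $C$. The same three-region split of the jump integral as above, now combined with $\alpha+\beta<2$, should afford enough freedom in $\delta_\pm$ to satisfy the required condition.

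The main obstacle will be extracting the \emph{sharp} threshold $\alpha+\beta\ge 2$ from these drift computations: the jumps crossing zero couple the two half-lines, so the sign of $(PV)(x)-V(x)$ depends on a delicate balance between the contribution of $\phi_-$ weighted by the S$\beta$S tail (for $x\ge 0$, large negative $z$) and vice versa. Optimizing $\gamma_-,\gamma_+,a_-,a_+$ (respectively $\delta_-,\delta_+$) and verifying that the resulting inequality saturates exactly at $\alpha+\beta=2$ is the technical heart of the argument; at the boundary value the choice of $V$ is likely forced to include a logarithmic term, and handling the critical case is where I expect the computations to be the most subtle.
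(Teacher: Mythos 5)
Your route is genuinely different from the paper's: the paper does not attempt a drift analysis at all, but instead transfers the known recurrence criterion for the continuous-time stable-like process $\process{X^{(\alpha,\beta)}}$ of \cite{bjoern} to the chain, via the Markov-chain approximation of \cite{bjoernschilling}, weak convergence of the recurrent-path events $R(O)$ (using that $\partial R(O)$ is a null set for the limit process), the $0$-$1$ law of Proposition \ref{p2.4}, and Propositions \ref{p2.6} and \ref{p2.13} to identify the first approximating chain with $\chain{X^{(\alpha,\beta)}}$. However, as it stands your proposal is a plan rather than a proof, and the gap is precisely where you locate it yourself: no Lyapunov function is actually exhibited, no drift inequality is verified, and the ``balancing'' of $\gamma_\pm$, $a_\pm$ that is supposed to produce the sharp threshold $\alpha+\beta\geq 2$ is deferred entirely. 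This is not a routine omission. The paper is explicit that the Foster--Lyapunov method of \cite{ja} only yields the non-sharp sufficient conditions $\liminf_{|x|\to\infty}\alpha(x)>1$ (recurrence) and $\limsup_{|x|\to\infty}\alpha(x)<1$ (transience), and that the case $\chain{X^{(\alpha,\beta)}}$ is treated here exactly because it is not covered by that method. In the interesting regime $\alpha<1<\beta$ with $\alpha+\beta\geq 2$ any single-sided power ansatz fails on the half-line with the smaller index; what is needed is the precise pair of growth exponents of the positive harmonic-type functions for the two-sided problem at the interface, together with control of the cross-zero jump terms, and nothing in your sketch establishes these. Until that computation is done there is no evidence the method saturates at $\alpha+\beta=2$ rather than at some other combination of $\alpha$ and $\beta$.

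Two further points. First, in the transience direction the test function you write, $V(x)=B+a_\pm|x|^{-\delta_\pm}$ with (implicitly) $a_\pm>0$, is \emph{decreasing} in $|x|$; the transience criterion in \cite[Chapter 8]{meyn} requires a bounded $V$ with $PV\geq V$ on a set of the form $\{V>r\}$ whose complement is also non-negligible, so the function must be \emph{larger} far from the origin (e.g.\ $V(x)=B-a_\pm|x|^{-\delta_\pm}$). The orientation needs to be fixed before the inequality can even be set up. Second, at the critical line $\alpha+\beta=2$ you anticipate a logarithmic correction; note that the claim of the theorem is recurrence \emph{including} equality, so the critical case cannot be waved off as a boundary technicality --- it is part of the statement, and it is also the case where drift arguments are notoriously least forgiving. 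If you want to pursue this route you should first carry out the computation for the pure one-sided model (reproducing Corollary \ref{c1.3}) and then for the genuinely two-sided case with explicit constants; otherwise the paper's strategy of reducing to \cite[Corollary 5.5]{bjoern} is the safer path.
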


\begin{theorem}\label{tm1.2}
If the set
                                                    $\{x:\alpha(x)=\alpha_0:=\inf_{x\in\R}\alpha(x)\}$ has positive Lebesgue measure,
                                                    then the
                                                    periodic
                                                    stable-like
                                                    chain
                                                    $\chain{X^{p}}$ is
recurrent if and only if $\alpha_0\geq1$.
\end{theorem}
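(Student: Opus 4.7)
The strategy is to combine the $\tau$-periodicity of the coefficients with a sharper Foster--Lyapunov drift argument than the one used in \cite{ja}, so as to cover the borderline regimes not treated there, namely $\alpha_0=1$ (for recurrence) and $\alpha_0<1\le\sup\alpha$ (for transience). As a preliminary I would exploit the periodicity by lifting to the compact torus $\R/\tau\ZZ$: by (PC1) the projected chain $\bar{X}^{p}_n:=X^{p}_n\bmod\tau$ has a strictly positive, continuous transition density on a compact state space, hence is strong Feller, uniformly ergodic, and admits a unique invariant probability measure $\pi$ equivalent to Lebesgue measure. In particular $\pi\bigl(\{x:\alpha(x)=\alpha_0\}\bigr)>0$.

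For the recurrence direction ($\alpha_0\geq 1$) I would take $V(x)=\log(1+x^2)$. The symmetry $f_x(-y)=f_x(y)$ kills the linear term in
\[
\Delta V(x)=\int_{\R}\bigl(V(x+y)-V(x)\bigr)f_x(y)\,dy,
\]
and the tail asymptotics (PC2) together with the uniform estimate (PC3) allow one to bound the remainder as $|x|\to\infty$; the outcome is $\Delta V(x)\le 0$ outside a sufficiently large ball whenever $\alpha_0\ge 1$, where in the critical case $\alpha_0=1$ this reproduces the classical logarithmic drift computation for symmetric $1$-stable laws. Combined with Lebesgue-irreducibility of $\chain{X^{p}}$ inherited from (C1)--(C5), the Meyn--Tweedie recurrence criterion from \cite{meyn} then yields recurrence.

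For the transience direction ($\alpha_0<1$) I would try a bounded test function $V(x)=1-(1+x^2)^{-\gamma/2}$ with $\gamma\in(0,\alpha_0)$. A careful tail expansion shows that, for $|x|$ large, $\Delta V(x)$ equals a $\tau$-periodic function of $x$ plus a negligible error. The pointwise sign of this periodic piece is not guaranteed on the thin-tailed points $\{\alpha(x)>\alpha_0\}$, and it is precisely here that the hypothesis $\lambda\bigl(\{\alpha=\alpha_0\}\bigr)>0$ enters: it forces the period-averaged drift $\int_{0}^{\tau}\Delta V(x+k\tau)\,dx$ to be strictly positive for all large $k$, because the $\alpha_0$-tailed states contribute a positive amount while the thin-tailed ones contribute only $o(1)$ by (PC3). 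A bounded $\tau$-periodic correction $W\circ(\cdot\bmod\tau)$, obtained by solving a Poisson-type equation for the torus chain $\bar X^{p}_n$, is then used to absorb the mean-zero periodic fluctuations of $\Delta V$ and convert the averaged inequality into the pointwise drift $\Delta(V+W)(x)\ge 0$ needed for the Meyn--Tweedie transience test.

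The main obstacle is the last step in the transience half, that is, constructing the periodic corrector $W$. Its solvability reduces to a Poisson equation on $\R/\tau\ZZ$, whose well-posedness rests on the uniform ergodicity of $\bar X^{p}_n$ and on the periodic part of $\Delta V$ integrating to zero against $\pi$ --- a centering condition that is set up precisely by (PC3) and the positive-measure assumption on $\{\alpha=\alpha_0\}$. Once $V+W$ is in hand, both halves of the theorem reduce to standard verifications of the drift criteria of \cite{meyn}.
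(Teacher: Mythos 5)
Your overall strategy (Foster--Lyapunov drift on $\R$ plus averaging over the torus $\R/\tau\ZZ$) is genuinely different from the paper's, which subordinates $\chain{X^{p}}$ by a Poisson process, proves that $\{n^{-1/\alpha_0}X^{p}_{N_{nt}}\}_{t\geq0}$ converges in the Skorohod topology to a symmetric $\alpha_0$-stable L\'evy process by controlling the semimartingale characteristics with the exponential ergodicity of the torus chain, and then transfers recurrence/transience between the chain and the limit via the $0$--$1$ law for $\lambda$-irreducible T-models. Unfortunately both halves of your drift argument break down precisely in the two regimes that are \emph{not} already covered by \cite{ja} (namely $\alpha_0=1$ for recurrence, and $\alpha_0<1\leq\sup_x\alpha(x)$ for transience), i.e.\ exactly where the theorem has new content. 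For the recurrence half: with $V(x)=\log(1+x^2)$ and a symmetric density with tail $c\,|y|^{-\alpha-1}$, the leading term of the drift is $\Delta V(x)\approx \tfrac{2c}{|x|^{\alpha}}\int_0^\infty \log|1-u^2|\,u^{-\alpha-1}\,du=\tfrac{c}{|x|^{\alpha}}\cdot\tfrac{2\pi}{\alpha}\cot\bigl(\tfrac{\pi\alpha}{2}\bigr)$, which vanishes identically at $\alpha=1$. So at the critical index the sign of $\Delta V$ is decided by sub-leading corrections to the tail of $f_x$, and (PC2)--(PC3) give only asymptotic equivalence with no rate; the inequality $\Delta V\leq 0$ outside a large ball is simply not derivable from the hypotheses (already for $\alpha\equiv 1$ the method is inconclusive, even though the chain is recurrent). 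This degeneracy is the reason \cite{ja} needs the strict inequality $\liminf\alpha(x)>1$.

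For the transience half the corrector step cannot work as described: if $W$ is a bounded, non-constant, $\tau$-periodic function, then $\Delta W(x)=PW(x)-W(x)$ is of order $1$ and takes both signs (it is $\pi$-centered but not identically zero), whereas $\Delta V(x)=O(|x|^{-\alpha_0})\to 0$. Hence $\Delta(V+W)$ is dominated by $\Delta W$ for large $|x|$ and cannot have a constant sign; to absorb the fluctuations you would need a genuinely two-scale corrector whose amplitude decays like the drift itself, which is a much more delicate construction (and is, in effect, what the paper's homogenization-of-characteristics argument accomplishes). There is also an internal inconsistency: you assert that the period-averaged drift is strictly positive, yet solvability of the Poisson equation $(I-P^{\Lambda})W=g$ on the torus requires $\int g\,d\pi=0$, so you can only correct the centered part and are left with the order-one problem above. (A minor further point: for a transience test function $1-|x|^{-\gamma}$ against an $\alpha$-stable tail with $\alpha<1$ the favorable range is $\gamma\in(0,1-\alpha)$, where $|x|^{\alpha-1}$ is the harmonic boundary case, not $\gamma\in(0,\alpha_0)$.) As it stands, the proposal establishes nothing beyond what \cite{ja} already gives, and a correct proof along the paper's lines has to pass through the scaling limit to the $\alpha_0$-stable process rather than through pointwise drift inequalities.
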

As a simple consequence of Theorems \ref{tm1.1} and \ref{tm1.2} we
get the following well-known recurrence and transience criterion for
the random walk case:
\begin{corollary}\label{c1.3}A S$\alpha$S  random walk on the real
line is recurrent if and only if $\alpha\geq1$.
\end{corollary}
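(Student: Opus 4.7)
The plan is to exhibit the S$\alpha$S random walk as a degenerate case of either of the two stable-like chains treated in Theorems \ref{tm1.1} and \ref{tm1.2}, and then read off the conclusion.

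First I would observe that, for $\alpha\in(0,2)$ and $\gamma\in(0,\infty)$, the S$\alpha$S random walk with jump characteristic function $\varphi(\xi)=\exp(-\gamma|\xi|^{\alpha})$ is precisely a Markov chain on $\R$ with transition kernel $p(x,dy)=f(y-x)\,dy$, where $f$ denotes the S$\alpha$S density. This is exactly the chain $\chain{X^{(\alpha,\beta)}}$ of (\ref{eq:1.3}) in the special case $\beta=\alpha$ and $\delta=\gamma$, in which the two pieces of the definition coincide and the transition density is independent of the sign of the current state. Applying Theorem \ref{tm1.1} to this special case gives recurrence if and only if $\alpha+\alpha=2\alpha\geq 2$, i.e.\ if and only if $\alpha\geq 1$.

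A perfectly analogous route uses Theorem \ref{tm1.2}: taking $\alpha(x)\equiv\alpha$ and $c(x)\equiv c_{\alpha}$ constant, and $f_x\equiv f$ equal to the fixed S$\alpha$S density, the map $x\longmapsto f_x$ is trivially $\tau$-periodic for every $\tau>0$, conditions (PC1)--(PC4) reduce to classical facts about the S$\alpha$S density recalled in the introduction, and the set $\{x:\alpha(x)=\alpha_0\}$ is all of $\R$, hence of positive Lebesgue measure. Theorem \ref{tm1.2} then yields recurrence iff $\alpha_0=\alpha\geq 1$. Either reduction suffices for $\alpha\in(0,2)$.

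The only point that is not covered by Theorems \ref{tm1.1}--\ref{tm1.2} is the boundary case $\alpha=2$, which corresponds to a Gaussian random walk on $\R$ and is not admitted by the standing hypothesis $\alpha(x)\in(0,2)$; for this case I would simply quote the classical Chung--Fuchs theorem (as in \cite[Theorem 4.2.7]{durrett}), which yields recurrence of any one-dimensional mean-zero random walk with finite variance. I do not expect any real obstacle: the whole corollary is a matter of specializing the hypotheses of the already-proven theorems, so the only thing to verify is that this specialization is legitimate, namely that the S$\alpha$S density satisfies the measurability/continuity/tail requirements of Section 1, which is immediate from \cite[Property 1.2.15]{taqqu}.
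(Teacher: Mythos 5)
Your proposal is correct and matches the paper's intended argument: the paper offers no separate proof of Corollary \ref{c1.3}, presenting it exactly as the specialization of Theorem \ref{tm1.1} (with $\beta=\alpha$, $\delta=\gamma$) and of Theorem \ref{tm1.2} (with $f_x\equiv f$ and $\alpha(x)\equiv\alpha$ constant) that you carry out. Your additional remark handling the boundary case $\alpha=2$ via the Chung--Fuchs theorem is a sensible completion of a detail the paper leaves implicit.
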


 The same problem
was already treated, but in continuous-time case, in \cite{bjoern}
and \cite{franke,frankeerata}. In \cite{bjoern} it is proved that
the stable-like process $\process{X^{(\alpha,\beta)}}$ with the
symbol $p(x,\xi)=\gamma(x)|\xi|^{\alpha(x)}$ is recurrent if and
only if $\alpha+\beta\geq2$, where $\alpha:\R\longrightarrow(0,2)$
and $\gamma:\R\longrightarrow(0,\infty)$ are continuously
differentiable functions with bounded derivative such that
$$\alpha(x)=\left\{\begin{array}{cc}
                                                      \alpha, & x<-k \\
                                                      \beta,&
                                                      x>k
                                                    \end{array}\right.\quad\textrm{and}\quad\gamma(x)=\left\{\begin{array}{cc}
                                                      \gamma, & x<-k \\
                                                      \delta,&
                                                      x>k
                                                    \end{array}\right.$$
                                                    for  $\alpha,\beta\in(0,2)$, $\gamma,\delta\in(0,\infty)$ and  $k>0$. In
\cite{franke} the author considers the recurrence and transience
problem of the stable-like process $\process{X^{p}}$  with the
symbol $p(x,\xi)=\gamma(x)|\xi|^{\alpha(x)}$, where
$\alpha:\R\longrightarrow(0,2)$ and
$\gamma:\R\longrightarrow(0,\infty)$ are continuously differentiable
and periodic functions with bounded derivative, and proves that if
the set $\{x\in\R:\alpha(x)=\alpha_0:=\inf_{x\in\R}\alpha(x)\}$ has
positive Lebesgue measure, then the  process  is recurrent if and
only if $\alpha_0\geq1.$ Both results and technics, in \cite{bjoern}
and \cite{franke}, will be crucial in proving our results.

Now we explain our strategy of  proving the  main results. In
\cite{bjoern} it is proved that the stable-like process
$\process{X^{(\alpha,\beta)}}$ is recurrent if and only if
$\alpha+\beta\geq2$, and in \cite{bjoernschilling} it is proved that
 $\process{X^{(\alpha,\beta)}}$ can be approximated by a
sequence of Markov chains $\chain{X^{(m)}}$, $m\in\N$, such that
$\chain{X^{(1)}}\stackrel{\hbox{\scriptsize
d}}{=}\chain{X^{(\alpha,\beta)}}.$ In Theorem~\ref{tm1.1} we prove
that all chains $\chain{X^{(m)}}$, $m\in\N$, are either recurrent
 or  transient at the same time and we prove that
their recurrence property is equivalent with the recurrence property
of the stable-like process $\process{X^{(\alpha,\beta)}}$. This
accomplishes the proof of Theorem~\ref{tm1.1}. In
Theorem~\ref{tm1.2} we subordinate the periodic stable-like chain
$\chain{X^{p}}$ with the Poisson process $\process{N}$ with
parameter $1$ and, following the ideas form \cite{franke},  prove
that the sequence of strong Markov processes
$\{n^{-\frac{1}{\alpha_0}}X^{p}_{N_{nt}}\}_{t\geq0}$, $n\in\N$,
converges in distribution, with respect to the Skorohod topology, to
symmetric $\alpha_0$-stable L\'{e}vy process. Furthermore, we prove
that all the processes
$\{n^{-\frac{1}{\alpha_0}}X^{p}_{N_{nt}}\}_{t\geq0}$, $n\in\N$, are
either recurrent
 or  transient at the same time,
their recurrence property is equivalent with the recurrence property
of a   symmetric $\alpha_0$-stable L\'{e}vy process and
 recurrence properties of the process
$\{X^{p}_{N_{t}}\}_{t\geq0}$ and the periodic stable-like chain
$\chain{X^{p}}$ are equivalent. This accomplishes the proof of
Theorem \ref{tm1.2}.

Let us remark that   the idea of studying recurrence and transience
property of a Markov process
  in terms of
the property of the associated Markov chain  is studied in
\cite{tuo}.

The paper is organized as follows.  In Section 2 we introduce some
preliminary and auxiliary results which will be needed to make the
connection with results proved in \cite{bjoern} and \cite{franke}.
In Sections 3 and 4 we give proofs of Theorems \ref{tm1.1} and
\ref{tm1.2} and in Section 5 we treat discrete version of the
stable-like chains $\chain{X^{(\alpha,\beta)}}$ and $\chain{X^{p}}$
and we derive the same recurrence and transience criteria as in
Theorems \ref{tm1.1} and \ref{tm1.2}.

Throughout the paper we use the following notation.  We write
$\ZZ_+$ and $\R_+$, respectively, for nonnegative integers and
nonnegative real numbers. For $x,y\in\R$ let $x\wedge
y:=\min\{x,y\}$ and $x\vee y:=\max\{x,y\}$. For two functions $f(x)$
and $g(x)$ we write $f(x)=o(g(x)),$ when $x\longrightarrow x_0$, if
$\lim_{x\longrightarrow x_0}f(x)/g(x)=0$, where
$x_0\in[-\infty,\infty].$ Write $B_b(\R)$, $C(\R)$, $C_b(\R)$, and
$C_0(\R)$, respectively, for the sets of bounded Borel measurable
functions, continuous functions, continuous bounded functions and
continuous functions vanishing at infinity. Together with the
supnorm $||\cdot||_\infty:=\sup_{x\in\R}|\cdot|$, $B_b(\R)$,
$C_b(\R)$ and $C_0(\R)$ are a Banach spaces.   Furthermore,
$(\chain{X},\{\mathbb{P}^{x}\}_{x\in\R})$,
$(\chain{X^{\alpha(x)}},\{\mathbb{P}^{x}\}_{x\in\R})$,
$(\chain{X^{(\alpha,\beta)}},\{\mathbb{P}^{x}\}_{x\in\R})$ and
$(\chain{X^{p}},\{\mathbb{P}^{x}\}_{x\in\R})$ will denote the
stable-like chains on $(\R,\mathcal{B}(\R))$ given by
(\ref{eq:1.1}), (\ref{eq:1.2}), (\ref{eq:1.3}) and (\ref{eq:1.4}),
respectively,  while $(\chain{Y},\{\mathbb{P}^{x}\}_{x\in\R})$ and
$(\process{Y},\{\mathbb{P}^{x}\}_{x\in\R})$ will denote an arbitrary
Markov chain and an arbitrary c\`adl\`ag strong Markov process on
$(\R,\mathcal{B}(\R))$ given by
 transition kernels $p(x,B)$ and  $p^{t}(x,B)$, for
$x\in \R$, $B\in \mathcal{B}(\R)$ and $t\in\R_+$, respectively.
Using the notation from \cite{tweedie}, we use the term \emph{Markov
model} and notation  $\{Y_t\}_{t\in\mathbb{T}}$, where $\mathbb{T}$
is either $\ZZ_+$ or $\R_+$, when the result holds regardless of the
time set involved. For $x\in\R$, $B\in \mathcal{B}(\R)$ and
$n\in\N$, let $p^{n}(x,B):=\mathbb{P}^{x}(Y_n\in B)$. For $x\in\R$
and $B\in \mathcal{B}(\R)$  we put
$\eta_B:=\sum_{n=0}^{\infty}1_{\{Y_n\in
                                                    B\}}$ or $\eta_B:=\int_{0}^{\infty}1_{\{Y_t\in
                                                    B\}}dt$, $\tau_B:=\inf \{n\geq0:
                                                    Y_n\in B\}$ or $\tau_B:=\inf \{t\geq0:
                                                    Y_t\in B\}$,
                                                    $Q(x,B):=\mathbb{P}^{x}(\eta_B=\infty)$,
                                                    $L(x,B):=\mathbb{P}^{x}(\tau_B<\infty)$ and
                                                    $U(x,B):=\mathbb{E}^{x}(\eta_B).$

\section{Preliminary and auxiliary results}

In this section we give some  preliminary and auxiliary results
needed for proving  the main results of this paper.
\begin{definition}
A  Markov model $\{Y_t\}_{t\in\mathbb{T}}$ on $(\R,\mathcal{B}(\R))$
is \emph{$\varphi$-irreducible} if there exists a probability
measure $\varphi(\cdot)$ on $\mathcal{B}(\R)$ such that, whenever
$\varphi(B)>0$, we have $U(x,B)>0$ for all $x\in\R$.
\end{definition}
Note that the stable-like chains $\chain{X}$ and
$\chain{X^{\alpha(x)}}$ (the chains given by (\ref{eq:1.1}) and
(\ref{eq:1.2}))
 are $\varphi-$irreducible for any probability measure $\varphi(\cdot)$ on
$\mathcal{B}(\R)$ which is absolutely continuous with respect to the
Lebesgue measure (see \cite[Proposition 2.1]{ja}).

In   \cite[Theorem 2.1]{tweedie} it is shown that the irreducibility
measure can always be  maximized. If $\{Y_t\}_{t\in\mathbb{T}}$ is a
$\varphi$-irreducible Markov model on $(\R,\mathcal{B}(\R))$, then
there exists
 a probability measure $\psi(\cdot)$ on $\mathcal{B}(\R)$  such that the model $\{Y_t\}_{t\in\mathbb{T}}$ is
 $\psi$-irreducible and $\bar{\varphi}\ll\psi$, for every irreducibility
 measure $\bar{\varphi}(\cdot)$ on $\mathcal{B}(\R)$ of the model
 $\{Y_t\}_{t\in\mathbb{T}}$. The measure $\psi(\cdot)$ is called the \emph{maximal
irreducibility measure} and from now on, when we refer to the
irreducibility measure we actually refer to the maximal
irreducibility measure. For the $\psi$-irreducible Markov model
$\{Y_t\}_{t\in\mathbb{T}}$ on $(\R,\mathcal{B}(\R))$ set
$\mathcal{B}^{+}(\R)=\{B\in\mathcal{B}(\R):\psi(B)>0\}.$ The maximal
irreducibility measure for the stable-like chains $\chain{X}$ and
$\chain{X^{\alpha(x)}}$  is equivalent, in absolutely continuous
sense, with the Lebesgue measure  (see \cite[Proposition 2.1]{ja}).

 Recall that a function
$f:\R\longrightarrow\R$ is called \emph{lower semicontinuous} if
$\liminf_{y\longrightarrow x}f(y)\geq f(x)$ holds for all $x\in\R$.
\begin{definition}Let $\{Y_t\}_{t\in\mathbb{T}}$ be a Markov
model on $(\R,\mathcal{B}(\R))$.
\begin{enumerate}
           \item [(i)] A set $B\in\mathcal{B}(\R)$ is \emph{uniformly transient}  if there exists  a finite constant $M\geq0$ such that
$U(x,B)\leq M$ holds for all $x\in\R$. The model
$\{Y_t\}_{t\in\mathbb{T}}$ is \emph{transient } if it is
           $\psi$-irreducible and
if there exists a countable cover of $\R$ with uniformly transient
sets.
\item [(ii)] A set $B\in\mathcal{B}(\R)$ is \emph{recurrent}  if $U(x,B)=\infty$ holds for all $x\in
 \R$. The model $\{Y_t\}_{t\in\mathbb{T}}$ is \emph{recurrent} if it  is $\psi$-irreducible and if every set $B\in \mathcal{B}^{+}(\R)$ is recurrent.
 \item [(iii)]A set $B\in\mathcal{B}(\R)$ is \emph{Harris
               recurrent}, or \emph{H-recurrent},  if $Q(x,B)=1$ holds for all $x\in
 \R$. The  model $\{Y_t\}_{t\in\mathbb{T}}$ is \emph{H-recurrent}
               if it is
$\psi$-irreducible and if every set $B\in \mathcal{B}^{+}(\R)$ is
H-recurrent.
\item [(iv)]The model $\{Y_t\}_{t\in\mathbb{T}}$ is called a \emph{T-model} if for some distribution $a(\cdot)$ on $\mathbb{T}$ there exists a kernel $T(x,B)$ with
$T(x,\R) > 0$ for all $x\in\R$, such that the function $x\longmapsto
T(x,B)$ is lower semicontinuous for all $B\in\mathcal{B}(\R)$, and
$$\int_{\mathbb{T}}p^{t}(x,B)a(dt)\geq T(x,B)$$ holds for all $x\in\R$ and all $B\in\mathcal{B}(\R).$
\end{enumerate}
\end{definition}
Let us remark that the H-recurrence property can be defined in the
equivalent way: The model $\{Y_t\}_{t\in\mathbb{T}}$ is H-recurrent
if it is $\psi$-irreducible and if $L(x,B) = 1$ holds for all
$x\in\R$ and all $B\in\mathcal{B}^{+}(\R)$ (see \cite [Theorem
2.4]{tweedie}). In general, recurrence and H-recurrence properties
are not equivalent (see \cite[Chapter 9]{tweedie}). Obviously,
H-recurrence implies recurrence. In the case of a Markov model which
is a $\lambda$-irreducible T-model, these two properties are
equivalent (see \cite[Proposition 5.3]{ja} and \cite[Theorem
4.2]{bjoern}).

In the following proposition, by assuming certain continuity
properties, we determine ``nice" sets for Markov models.
\begin{proposition}\label{p2.3} Let $\{Y_t\}_{t\in\mathbb{T}}$ be a
$\psi$-irreducible  Markov model, then:
\begin{enumerate}
\item [(i)] the  model $\{Y_t\}_{t\in\mathbb{T}}$ is either
recurrent or transient.
\end{enumerate}
In addition, if we assume that $\{Y_t\}_{t\in\mathbb{T}}$ is a
T-model, then:
\begin{enumerate}
\item [(ii)] the  model $\{Y_t\}_{t\in\mathbb{T}}$ is H-recurrent if and only if there
  exists a  H-recurrent compact set.
\item[(iii)] assume the following additional assumption in the continuous-time case: for every
  compact
set $C\in\mathcal{B}(\R)$ there exists a distribution $a_C(\cdot)$
on $\R_+$, such that \be\label{eq:2.1} \inf_{x\in
C}\int_0^{\infty}\mathbb{P}^{x}(X_t\in B)a_C(dt)
>0\ee holds for all $B\in\mathcal{B}^{+}(\R^{d})$. Then
the model $\{Y_t\}_{t\in\mathbb{T}}$ is transient  if and only if
  every compact set is uniformly transient.
\item [(iv)] under  the assumption (\ref{eq:2.1}) for the continuous-time case, the  model $\{Y_t\}_{t\in\mathbb{T}}$ is recurrent if and only if there
  exists a recurrent compact  set.
\end{enumerate}
\end{proposition}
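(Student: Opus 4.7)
The plan is to derive all four parts from the general theory of $\psi$-irreducible Markov models developed in \cite{meyn,tweedie}, handling the continuous-time refinements via the T-model structure together with assumption (\ref{eq:2.1}). For part (i), the recurrence/transience dichotomy is standard in the literature: in discrete time it is Theorem 8.0.1 of \cite{meyn}, and in continuous time the analogous result is in \cite{tweedie}. The underlying idea is a maximum principle for the potential kernel $U(x,B)$, showing that for each $B\in\mathcal{B}^+(\R)$ the sets $\{x:U(x,B)=\infty\}$ and $\{x:U(x,B)<\infty\}$ cannot both carry positive $\psi$-mass, which forces a global dichotomy. I would simply invoke this here.

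For part (ii), the T-model property allows one to bootstrap H-recurrence of a single compact set to H-recurrence of every set in $\mathcal{B}^+(\R)$. Given an H-recurrent compact $C$ and an arbitrary $B\in\mathcal{B}^+(\R)$, the T-kernel $T(\cdot,B)$ is lower semicontinuous and, by $\psi$-irreducibility, strictly positive on $\R$, hence bounded away from zero on $C$. This yields a uniform positive lower bound for the hitting probability of $B$ starting from $C$; combined with the strong Markov property and the fact that $C$ is visited infinitely often from any $x\in\R$, a geometric/Borel--Cantelli argument gives $L(x,B)=1$ for every $x\in\R$, hence H-recurrence; the converse direction is immediate. Parts (iii) and (iv) then follow by combining (i) and (ii) with the observation that $\R=\bigcup_{n\in\N}[-n,n]$ admits a countable compact cover: for (iii), one direction uses this cover to pass from uniform transience of compacts to a uniformly transient countable cover of $\R$, while the other uses the T-model property and (\ref{eq:2.1}) to transfer a uniformly transient cover of $\R$ back to uniform transience on each compact. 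Part (iv) follows by contraposition, since the existence of a recurrent compact set excludes transience by (iii) and thus forces recurrence via (i), while the converse is immediate from inner regularity of $\psi$ on $\R$.

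The main obstacle, and the place where (\ref{eq:2.1}) is essential, is the continuous-time version of (iii). In discrete time, the T-kernel directly controls the transition kernel of a skeleton chain and the passage from a uniformly transient cover to uniform transience of compacts follows from the general theory of \cite{meyn}. In continuous time, however, the T-kernel only bounds a resolvent-type average of $p^t(x,B)$ from below, so to obtain uniform transience on an arbitrary compact $K$ one needs precisely a positive lower bound of the form $\inf_{x\in K}\int p^t(x,B)\,a_K(dt)>0$, which is exactly what (\ref{eq:2.1}) supplies. Once this transfer is handled, parts (iii) and (iv) fall into place, and the four assertions assemble cleanly from the general framework.
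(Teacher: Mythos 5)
Your overall route coincides with the paper's: part (i) is quoted from the general recurrence/transience dichotomy, part (ii) is the standard T-model result from Meyn and Tweedie, the only place where real work is needed is the continuous-time half of (iii), and there you correctly identify that assumption (\ref{eq:2.1}) is exactly what lets one pass from a single uniformly transient set $B\in\mathcal{B}^{+}(\R)$ to uniform transience of an arbitrary compact $C$; the paper implements this transfer by the Chapman--Kolmogorov estimate $U(x,B)\geq\delta_C\,U(x,C)$ with $\delta_C=\inf_{x\in C}\int_0^{\infty}\mathbb{P}^{x}(X_t\in B)\,a_C(dt)$, which is precisely the mechanism you describe. Part (iv) is likewise deduced from (i) and (iii) in both treatments.

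There is, however, one step in your sketch of (ii) that does not hold as stated: you claim that for every $B\in\mathcal{B}^{+}(\R)$ the function $x\longmapsto T(x,B)$ is strictly positive on $\R$ ``by $\psi$-irreducibility''. The definition of a T-model only guarantees $T(x,\R)>0$, and $\psi$-irreducibility only gives $U(x,B)>0$; since the T-kernel sits \emph{below} the sampled kernel, $\int_{\mathbb{T}}p^{t}(x,B)\,a(dt)\geq T(x,B)$, neither fact forces $T(x,B)>0$ --- the component $T(x,\cdot)$ may assign mass zero to a given $B$ of positive $\psi$-measure. Consequently the uniform lower bound on the hitting probability of $B$ from the compact set $C$ does not follow the way you set it up. The standard repair (and the content of the results the paper cites, \cite[Proposition 9.1.7]{meyn} and \cite[Theorem 3.3]{meyntweedie}) is to use that in a $\psi$-irreducible T-model every compact set is petite, and that an H-recurrent petite set forces H-recurrence of the whole model; the Borel--Cantelli/strong-Markov mechanism you describe is then run with the petiteness minorization rather than with pointwise positivity of $T(\cdot,B)$. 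With that substitution your argument for (ii) goes through, and the remaining parts are fine.
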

\begin{proof}
\begin{itemize}
\item[(i)] The proof is given in \cite[Theorem 2.3]{tweedie}.
  \item [(ii)] The proof is given in \cite[Proposition 9.1.7]{meyn} and \cite[Theorem 3.3]{meyntweedie}.
  \item [(iii)] The proof for the discrete-time case is given in \cite[Theorems 8.3.5]{meyn}. If the process $\{Y_t\}_{t\geq0}$ is transient, then there
  exists at least one uniformly transient set $B\in\mathcal{B}^{+}(\R)$.
  By assumption (\ref{eq:2.1}), $$\delta_C:=\inf_{x\in
C}\int_0^{\infty}\mathbb{P}^{x}(X_t\in B)a_C(dt)
>0$$ holds for every compact set $C\in\mathcal{B}(\R)$. Using the Chapman-Kolmogorov equation we
have:
\begin{align*}U(x,B)&=\int_0^{\infty}U(x,B)a_C(dt)=
\int_0^{\infty}\int_0^{\infty}p^{s}(x,B)dsa_C(dt)\\
&\geq\int_{0}^{\infty}\int_0^{\infty}p^{s+t}(x,B)dsa_C(dt)=\int_{0}^{\infty}\int_0^{\infty}\int_{\R}p^{s}(x,dy)p^{t}(y,B)dsa_C(dt)\\
&\geq\delta_C\int_0^{\infty}\int_{C}p^{s}(x,dy)ds=\delta_CU(x,C).
\end{align*}
  \item [(iv)] The proof follows directly from (i) and (iii).
\end{itemize}
  \end{proof}

Now, we derive the recurrence and transience dichotomy by using
sample-paths properties of Markov models. Let $B\in\mathcal{B}(\R)$
be arbitrary and let $\mathbb{D}(\R)$ be the space of real-valued
c\`adl\`ag functions equipped with the Skorohod topology. In the
continuous-time case, define the \emph{set of recurrent paths} by:
$$R(B):=\{\omega\in\ \mathbb{D}(\R):\forall n\in\N,\ \exists t\geq n \ \textrm{such that} \ \omega(t)\in B\},$$ and the \emph{set of transient
paths}  by:
$$T(B):=\{\omega\in\ \mathbb{D}(\R):\exists s\geq0 \ \textrm{such that} \ \omega(t)\not\in B, \ \forall t\geq s
\}.$$ It is clear that $T(B)=R(B)^{c}$, and for any open set
$O\subseteq\R$, by the right continuity, $R(O)$ and $T(O)$ are
measurable (with respect to the Borel $\sigma$-algebra generated by
the Skorohod topology). In the discrete-time case, using the same
notation, we similarly define the \emph{set of recurrent paths} by:
$$R(B):=\{\omega\in\R^{\ZZ_+}:\forall n\in\N,\ \exists m\geq n \ \textrm{such that} \ \omega(m)\in B\},$$ and the \emph{set of transient
paths}  by:
$$T(B):=\{\omega\in\R^{\ZZ_+}:\exists m\geq0 \ \textrm{such that} \ \omega(n)\not\in B, \ \forall n\geq
m \}.$$ Clearly, $T(B)=R(B)^{c}$ and for any $B\in\mathcal{B}(\R)$,
$R(B)$ and  $T(B)$ are $\mathcal{B}(\R)^{\ZZ_+}$ measurable.

\begin{proposition}\label{p2.4} Let $\textbf{Y}=\{Y_t\}_{t\in\mathbb{T}}$  be a $\lambda$-irreducible
T-model, and  let us assume (\ref{eq:2.1})  holds  for the
continuous-time case. Then the following 0-1 property must be met:
$$\mathbb{P}^{x}\left(\eta_O=\infty\right)=0 \quad \textrm{for all} \ x\in\R \ \textrm{and all open bounded sets}\ O\subseteq\R $$ or
$$\mathbb{P}^{x}\left(\eta_O=\infty\right)=1 \quad \textrm{for all} \ x\in\R \ \textrm{and all open bounded sets}\quad
O\subseteq\R.$$ In particular, the model $\textbf{Y}$ is recurrent
if
             and only if $\mathbb{P}^{x}_\textbf{Y}(R(O))=1$ for all $x\in\R$ and
             all
             open bounded sets $O\subseteq\R$, and
              it is transient if
             and only if $\mathbb{P}^{x}_\textbf{Y}(T(O))=1$ for all $x\in\R$ and
             all
             open bounded sets $O\subseteq\R$.
\end{proposition}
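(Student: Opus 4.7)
I would base the argument on the abstract dichotomy of Proposition \ref{p2.3}(i): under the standing hypotheses $\textbf{Y}$ is either recurrent or transient, so it suffices to show that recurrence forces $\mathbb{P}^{x}(\eta_O=\infty)=1$ (and hence $\mathbb{P}^{x}(R(O))=1$) for every $x\in\R$ and every nonempty open bounded $O\subseteq\R$, while transience forces the opposite value $0$ together with $\mathbb{P}^{x}(T(O))=1$. This produces the $0$-$1$ alternative for $\mathbb{P}^{x}(\eta_O=\infty)$ stated in the proposition, and the two sample-path ``if and only if'' characterizations follow from this together with the dichotomy (the converse implications use that $R(O)$ and $T(O)$ are complementary events).

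For the recurrent case I would invoke the equivalence of recurrence and H-recurrence for $\lambda$-irreducible T-models recorded in the remark preceding Proposition \ref{p2.3}, obtaining $\mathbb{P}^{x}(\eta_B=\infty)=Q(x,B)=1$ for all $x\in\R$ and all $B\in\mathcal{B}^{+}(\R)$. Any nonempty open bounded $O$ satisfies $\lambda(O)>0$, and since $\lambda\ll\psi$ by maximality of $\psi$, one has $O\in\mathcal{B}^{+}(\R)$. The inclusion $\{\eta_O=\infty\}\subseteq R(O)$ is immediate in discrete time, and in continuous time follows because any $\omega\in T(O)=R(O)^{c}$ has $\eta_O(\omega)$ bounded by the final-exit time of $\omega$ from $O$; hence $\mathbb{P}^{x}(R(O))=1$.

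For the transient case I would apply Proposition \ref{p2.3}(iii) (with hypothesis (\ref{eq:2.1}) in the continuous-time case) to conclude that every compact set, in particular $\bar O$, is uniformly transient. Thus $U(x,O)\leq U(x,\bar O)\leq M_{\bar O}<\infty$ for all $x\in\R$, so $\mathbb{E}^{x}[\eta_O]<\infty$ and $\mathbb{P}^{x}(\eta_O=\infty)=0$. To upgrade this to $\mathbb{P}^{x}(T(O))=1$, the discrete case is trivial since there $T(O)=\{\eta_O<\infty\}$. In the continuous case I would apply the strong Markov property at successive hitting times of $O$, together with right continuity (each entry to the open set $O$ is followed by a right-interval of positive length spent in $O$) and the uniform bound on $U(\cdot,\bar O)$, in order to rule out paths that keep returning to $O$ at arbitrarily late times while accumulating only a finite total sojourn there.

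The main obstacle is precisely this last step: converting the $L^{1}$ bound $\mathbb{E}^{x}[\eta_O]<\infty$ into the almost-sure statement $\mathbb{P}^{x}(T(O))=1$ in continuous time. The delicate point is to secure a uniform positive lower bound on the expected sojourn in $O$ per visit from $\bar O$, for which the T-model regularity together with (\ref{eq:2.1}) provides the right framework. Everything else is a clean combination of the dichotomy in Proposition \ref{p2.3}(i), the uniform transience of compacts from Proposition \ref{p2.3}(iii), and the recurrence/H-recurrence equivalence for $\lambda$-irreducible T-models.
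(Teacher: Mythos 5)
Your architecture is sound and is essentially the route the paper takes (the paper simply outsources each step to references): reduce everything to the dichotomy of Proposition \ref{p2.3}(i), use the recurrence/H-recurrence equivalence for $\lambda$-irreducible T-models in the recurrent case, and use the uniform transience of compacts from Proposition \ref{p2.3}(iii) in the transient case. The recurrent half is complete: H-recurrence gives $Q(x,O)=1$ for every nonempty open bounded $O$ (such $O$ lies in $\mathcal{B}^{+}(\R)$ because $\lambda\ll\psi$), and $\{\eta_O=\infty\}\subseteq R(O)$ in both time scales. The transient half correctly yields $\mathbb{P}^{x}(\eta_O=\infty)=0$ from $U(x,O)\leq U(x,\bar O)<\infty$. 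So the $0$-$1$ alternative for $\eta_O$ is fully established, and the discrete-time case of the sample-path characterization is done.

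The genuine gap is the one you flag yourself: in continuous time the implication ``transient $\Rightarrow\mathbb{P}^{x}(T(O))=1$'' is announced but not proved, and without it neither ``if and only if'' closes (the converse of the recurrence characterization also needs it, because in continuous time $R(O)$ is strictly larger than $\{\eta_O=\infty\}$: a c\`adl\`ag path can enter $O$ at arbitrarily late times while accumulating only finite occupation time, the sojourn intervals being of positive length by right continuity but possibly shrinking to zero). Your proposed repair --- a uniform positive lower bound on the expected sojourn in $O$ per visit, obtained from the strong Markov property and the T-model structure --- does not follow from the hypotheses as stated: condition (\ref{eq:2.1}) bounds from below the resolvent-type quantity $\int_0^{\infty}\mathbb{P}^{x}(Y_t\in O)\,a_C(dt)$ for an unspecified distribution $a_C$, which controls neither the occupation time of $O$ over a fixed window after a hitting time nor the probability of a further return, so the chain of estimates you envisage does not close without additional input. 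This is precisely the step the paper does not prove either but cites away, invoking \cite[Theorem 3.3]{meyntweedie} (non-evanescence versus Harris recurrence for T-processes) to pass from the occupation-time dichotomy to the sample-path statement. To make your argument complete you must either import that result explicitly or reprove its transient half; as written, your proof is complete only in discrete time.
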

\begin{proof} The 0-1 property  in the discrete-time case follows   from \cite
[Proposition 5.3]{ja} and \cite[Theorems 6.2.5 and 8.3.5]{meyn}. The
claim in the continuous-time case follows from Proposition
\ref{p2.3}, \cite[Theorem 5.1]{tweedie} and \cite[Theorem
4.2]{bjoern}. Now, the characterization  by sample paths
 easily follows from the 0-1 property  and  \cite[Theorem 3.3]{meyntweedie}.
\end{proof}

As already mentioned, the stable-like chains $\chain{X}$ and
$\chain{X^{\alpha(x)}}$ (the chains given by (\ref{eq:1.1}) and
(\ref{eq:1.2})) are $\lambda$-irreducible and, by \cite[Proposition
5.2]{ja}, the stable-like chain $\chain{X}$ is a T-model. In the
following proposition we give sufficient conditions for the
stable-like chain $\chain{X^{\alpha(x)}}$ to be a T-model.
\begin{proposition}\label{p2.5}Let $\alpha:\R\longrightarrow(0,2)$ and
$\gamma:\R\longrightarrow(0,\infty)$ be continuous functions. Then
the stable-like chain  $\chain{X^{\alpha(x)}}$  is a T-model. In
particular, $\chain{X^{\alpha(x)}}$ is H-recurrent if and only if it
is recurrent.
\end{proposition}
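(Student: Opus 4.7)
The strategy is to verify the T-model definition directly with the sampling distribution $a(\cdot):=\delta_1$ on $\ZZ_+$ and the natural kernel $T(x,B):=p(x,B)$. Since $T(x,\R)=1>0$, the only nontrivial point is showing that $x\longmapsto p(x,B)$ is lower semicontinuous for every $B\in\mathcal{B}(\R)$. Once this is done, the ``in particular'' clause is immediate from $\lambda$-irreducibility of $\chain{X^{\alpha(x)}}$ (established after Definition 2.1) together with the fact, recorded in the paragraph following Definition 2.2, that recurrence and H-recurrence coincide for $\lambda$-irreducible T-models.

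The main technical step is to show that $(\alpha,\gamma,z)\longmapsto f_{(\alpha,\gamma)}(z)$ is jointly continuous and strictly positive on $(0,2)\times(0,\infty)\times\R$. By Fourier inversion and symmetry,
\[
f_{(\alpha,\gamma)}(z)=\frac{1}{\pi}\int_{0}^{\infty}\cos(\xi z)\,e^{-\gamma\xi^{\alpha}}\,d\xi.
\]
Given $(\alpha_n,\gamma_n,z_n)\to(\alpha,\gamma,z)$, choose a small neighbourhood of $(\alpha,\gamma)$ on which $\alpha_n\geq\alpha_->0$ and $\gamma_n\geq\gamma_->0$. The integrand converges pointwise and is uniformly bounded by the Lebesgue integrable function $\mathbf{1}_{[0,1]}(\xi)+e^{-\gamma_{-}\xi^{\alpha_{-}}}\mathbf{1}_{(1,\infty)}(\xi)$, so dominated convergence yields joint continuity. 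Strict positivity of $f_{(\alpha,\gamma)}$ everywhere on $\R$ is a classical property of symmetric stable densities. Since $x\longmapsto(\alpha(x),\gamma(x))$ is continuous by assumption, it follows that $(x,y)\longmapsto f_{(\alpha(x),\gamma(x))}(y-x)$ is jointly continuous and strictly positive on $\R\times\R$.

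With this in hand, lower semicontinuity of $x\longmapsto T(x,B)=\int_{B}f_{(\alpha(x),\gamma(x))}(y-x)\,dy$ follows from a direct application of Fatou's lemma: along any sequence $x_n\to x$,
\[
\liminf_{n\to\infty}T(x_n,B)=\liminf_{n\to\infty}\int_{B}f_{(\alpha(x_n),\gamma(x_n))}(y-x_n)\,dy\geq\int_{B}f_{(\alpha(x),\gamma(x))}(y-x)\,dy=T(x,B),
\]
where the inequality uses pointwise continuity of the density on $\R\times\R$. Since $\int_{\ZZ_+}p^{k}(x,B)\,\delta_1(dk)=p(x,B)=T(x,B)$, the T-model condition is verified, and the equivalence of recurrence and H-recurrence follows as explained above.

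The only genuine obstacle is the dominated convergence argument giving joint continuity of stable densities in their parameters; everything else is bookkeeping. A minor care point is choosing the dominating function uniformly on a neighbourhood of the limit $(\alpha,\gamma)$, which is possible precisely because $\alpha$ and $\gamma$ take values in the open sets $(0,2)$ and $(0,\infty)$ and vary continuously.
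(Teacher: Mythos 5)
Your proposal is correct and follows essentially the same route as the paper: the same choice $a(\cdot)=\delta_1$, $T(x,B)=p(x,B)$, continuity of the stable densities in their parameters via Fourier inversion and dominated convergence, and then Fatou's lemma to pass from continuity of the integrand to lower semicontinuity of $x\longmapsto p(x,B)$ for general Borel $B$. The only cosmetic difference is that the paper first proves full continuity of $y\longmapsto p(y,B)$ for sets of finite Lebesgue measure and then applies Fatou to the decomposition $B=\bigcup_{n}B\cap(n,n+1]$, whereas you apply Fatou directly at the level of the densities; both are valid.
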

\begin{proof}Let us define $a(\cdot):=\delta_1(\cdot)$ and $T(x,B):=p(x,B)$ for $x\in\R$ and $B\in\mathcal{B}(\R)$. We  prove that the  function $x\longmapsto T(x,B)$ is lower semicontinuous
for every $B\in\mathcal{B}(\R).$ Let $x\in\R$ and
$B\in\mathcal{B}(\R)$ be arbitrary and such that
$\lambda(B)<\infty$. By the dominated convergence theorem and
continuity of the functions $\alpha(x)$ and $\gamma(x)$ we have
\begin{align*}\lim_{y\longrightarrow x}p(y,B)&=\lim_{y\longrightarrow
x}\int_Bf_{(\alpha(y),\gamma(y))}(z-y)dz\\&=(2\pi)^{-1}\lim_{y\longrightarrow
x}\int_{B}\int_{\R}\cos (\xi(z-y))
e^{-\gamma(y)|\xi|^{\alpha(y)}}d\xi dz\\&=\int_{B}\int_{\R}\cos
(\xi(z-x)) e^{-\gamma(x)|\xi|^{\alpha(x)}}d\xi
dz\\&=p(x,B).\end{align*} Let $B\in\mathcal{B}(\R)$ be arbitrary,
then, by Fatou's lemma, we have
\begin{align}\label{eq:2.2}\liminf_{y\longrightarrow
x}p(y,B)=&\liminf_{y\longrightarrow x}\sum_{n\in\ZZ}p(y,B\cap(
n,n+1]) \geq \sum_{n\in\ZZ}p(x,B\cap( n,n+1])=p(x,B).\end{align}
\end{proof}

Recall that a Markov model $\{Y_t\}_{t\in\mathbb{T}}$ is said to
satisfy the \emph{$C_b$-Feller property} if for all $f\in C_b(\R)$
and all $t\in\mathbb{T}$ the function $x\longmapsto\int_\R
p^{t}(x,dy)f(y)$ is in the space $C_b(\R)$. Furthermore, a Markov
model $\{Y_t\}_{t\in\mathbb{T}}$ is said to satisfy the \emph{strong
Feller property} if for all $f\in B_b(\R)$ and all
$t\in\mathbb{T}\setminus\{0\}$ the function $x\longmapsto\int_\R
p^{t}(x,dy)f(y)$ is in the space $C_b(\R)$. In \cite[Proposition
6.1.1]{meyn} it is shown that the $C_b$-Feller property
(respectively the strong Feller property) of a Markov model is
equivalent with the lower semicontinuity of the function
$x\longmapsto p^{t}(x,O)$ (respectively the function $x\longmapsto
p^{t}(x,B)$) for all open sets $O\subseteq\R$ (respectively all
Borel sets $B\subseteq\R$) and all $t\in\mathbb{T}\setminus\{0\}$.
Note that (\ref{eq:2.2}) reads that the stable-like chain
$\chain{X^{\alpha(x)}}$ satisfies the $C_b$-Feller property and the
strong Feller property.

Unfortunately, the stable-like chain $\chain{X^{(\alpha,\beta)}}$
(the chain given by (\ref{eq:1.3})) does not satisfy the
$C_b$-Feller property and the strong Feller property
($\liminf_{y\longrightarrow0} p(y,O)\geq p(0,O)$ does not hold for
some open sets $O\subseteq\R$). We introduce its ``continuous" and
in the recurrence and transience sense equivalent version: Let $k>0$
be arbitrary and let $\chain{\bar{X}^{(\alpha,\beta)}}$ be the
stable-like Markov chain defined by transition densities with
following characteristic functions
\begin{align*}\bar{\varphi}(x;\xi)=
                                                      \exp(-\bar{\gamma}(x)|\xi|^{\bar{\alpha}(x)}),\end{align*}
                                                      where
                                                      functions
                                                      $\bar{\alpha}:\R\longrightarrow(0,2)$
                                                      and
                                                      $\bar{\gamma}:\R\longrightarrow(0,\infty)$
                                                      are continuous
                                                      functions such
                                                      that
\begin{align*}\bar{\alpha}(x)=\left\{\begin{array}{cc}
                                                      \alpha, & x<-k \\
                                                      \beta, & x>k
                                                    \end{array}\right.   \quad \textrm{and}\quad  \bar{\gamma}(x)=\left\{\begin{array}{cc}
                                                      \gamma, & x<-k \\
                                                      \delta,&
                                                      x>k.
                                                    \end{array}\right.\end{align*}

\begin{proposition}\label{p2.6} The stable-like chain $\chain{\bar{X}^{(\alpha,\beta)}}$ is
recurrent if and only if the stable-like chain
$\chain{X^{(\alpha,\beta)}}$ is recurrent.
\end{proposition}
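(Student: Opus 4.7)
The plan is to exploit the fact that $\chain{X^{(\alpha,\beta)}}$ and $\chain{\bar{X}^{(\alpha,\beta)}}$ share the same one-step transition kernel outside the compact set $K:=[-k,k]$, combined with the sample-path 0-1 characterization of recurrence in Proposition~\ref{p2.4}. First I would verify that both chains are $\lambda$-irreducible T-models: for $\chain{X^{(\alpha,\beta)}}$ this is built into the standing hypotheses (C1)-(C5) together with \cite[Proposition 5.2]{ja}, and for $\chain{\bar{X}^{(\alpha,\beta)}}$ it follows from Proposition~\ref{p2.5} since $\bar{\alpha}$ and $\bar{\gamma}$ are continuous by construction. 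In particular, both chains share Lebesgue measure as maximal irreducibility measure and Proposition~\ref{p2.4} with its 0-1 dichotomy is available for each.

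Assume $\chain{X^{(\alpha,\beta)}}$ is recurrent. Pick any open bounded set $O_{0}\subset K$. By Proposition~\ref{p2.4}, $\mathbb{P}^{x}(\eta_{O_{0}}=\infty)=1$ for every $x\in\R$ under $X^{(\alpha,\beta)}$, and in particular $\mathbb{P}^{x}(\tau_{K}<\infty)=1$. Because the one-step kernels of $X^{(\alpha,\beta)}$ and $\bar{X}^{(\alpha,\beta)}$ coincide on $K^{c}$, an elementary induction on the jump steps shows that the joint law of the stopped trajectory $(X_{0},X_{1},\ldots,X_{\tau_{K}})$ starting from any $x\in K^{c}$ is identical for the two chains; therefore $\mathbb{P}^{x}(\tau_{K}<\infty)=1$ also under $\bar{X}^{(\alpha,\beta)}$ for every $x\in K^{c}$. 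On the other hand, continuity of $\bar{\alpha}$ and $\bar{\gamma}$ on the compact set $K$ together with positivity and joint continuity of the stable densities gives $\inf_{y\in K}\bar{p}(y,K^{c})\geq\varepsilon>0$, so starting from $y\in K$ the exit time from $K$ under $\bar{X}^{(\alpha,\beta)}$ is stochastically dominated by a geometric random variable and is in particular almost surely finite.

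Iterating these two observations via the strong Markov property, $\bar{X}^{(\alpha,\beta)}$ almost surely visits $K$ infinitely often from any starting point in $\R$. Choosing any open bounded $O\supset K$ then gives $\eta_{O}\geq\eta_{K}=\infty$ $\mathbb{P}^{x}$-a.s., hence $\mathbb{P}^{x}(\eta_{O}=\infty)=1$ for this $O$ and every $x$; by the 0-1 dichotomy of Proposition~\ref{p2.4} the chain $\bar{X}^{(\alpha,\beta)}$ is recurrent. The converse direction is obtained by running the same argument with the roles of $X^{(\alpha,\beta)}$ and $\bar{X}^{(\alpha,\beta)}$ exchanged, noting that the symmetric setup uses only the two ingredients singled out above.

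The main obstacle is conceptual rather than computational: one needs to justify that a Markov perturbation supported on a compact set cannot change the recurrence/transience status. The argument above makes this precise through (i) the path-law identity up to $\tau_{K}$ for two chains sharing a transition kernel on $K^{c}$, and (ii) the uniform one-step exit lower bound on the compact set $K$. Together with strong Markov iteration these reduce the whole question to the single family of open bounded sets $O\supset K$, where the 0-1 dichotomy of Proposition~\ref{p2.4} does the remaining work.
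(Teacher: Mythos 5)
Your proof is correct, but it takes a genuinely different route from the paper's. The paper does not argue on sample paths at all: it invokes \cite[Proposition 5.4]{ja}, a ready-made statement that two stable-like chains agreeing outside a compact set have the same recurrence property provided condition (C5) holds, and its entire proof consists of verifying (C5) for $\chain{\bar{X}^{(\alpha,\beta)}}$ --- namely that $\inf_{x\in[-k,k]}\int_{C-x}f_{(\bar{\alpha}(x),\bar{\gamma}(x))}(y)\,dy>0$ for every compact $C\subseteq[-k,k]^{c}$ of positive Lebesgue measure --- via a compactness argument using continuity in $x$ of the stable densities and their symmetric bell shape. You instead prove the underlying ``compact perturbation does not change recurrence'' principle from scratch: the path-law identity up to $\tau_{K}$ for kernels agreeing on $K^{c}$, the passage from recurrence to Harris-type statements for $\lambda$-irreducible T-models packaged in Proposition~\ref{p2.4} (which gives $\mathbb{P}^{x}(\tau_{K}<\infty)=1$ for \emph{every} $x$, not just a.e.\ $x$ --- the step where the T-model hypothesis is genuinely used), a strong Markov iteration to conclude $\eta_{K}=\infty$ a.s., and the 0-1 dichotomy to upgrade a single good open set $O\supset K$ to recurrence. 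Your argument is self-contained and makes transparent exactly which structural features matter, at the cost of length; the paper's is two lines of reduction plus one analytic estimate, but leans entirely on the cited machinery of \cite{ja}. Two minor remarks, neither affecting correctness: the uniform exit bound $\inf_{y\in K}\bar{p}(y,K^{c})>0$ is not actually needed, since if the chain never left $K$ it would trivially visit $K\subset O$ infinitely often; and in the converse direction the analogous bound for $\chain{X^{(\alpha,\beta)}}$ follows only from piecewise (not global) continuity of $x\longmapsto p(x,K^{c})$ on $[-k,0)$ and $[0,k]$, since that chain's kernel is discontinuous at $0$.
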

\begin{proof}
By \cite[Propostion 5.4]{ja}, it suffices to prove that condition
(C5) holds, i.e., that there exists $l
> 0$ such that for all compact sets $C\subseteq[-l, l]^{c}$ with
$\lambda(C) > 0,$ we have
$$\inf_{x\in[-k,k]}\int_{C-x}f_{(\bar{\alpha}(x),\bar{\gamma}(x))}(dy)>0.$$
Let us take $l=k$. Without loss of generality, let $C\subseteq(
k,\infty)$ be a compact set with $\lambda(C) > 0$. Then by symmetry
and bell-shaped property of densities
$f_{(\bar{\alpha}(x),\bar{\gamma}(x))}(y)$ (see \cite[Theorem
1]{gawronski}), we have
$$\inf_{x\in[-k,k]}\int_{C-x}f_{(\bar{\alpha}(x),\bar{\gamma}(x))}(y)dy\geq
\inf_{x\in[-k,k]}\int_{C+k}f_{(\bar{\alpha}(x),\bar{\gamma}(x))}(y)dy.$$
Let us assume that
$\inf_{x\in[-k,k]}\int_{C-x}f_{(\bar{\alpha}(x),\bar{\gamma}(x))}(y)dy=0$.
Then there exists a sequence $\{x_n\}_{n\in\N}\subseteq[-k,k]$, such
that $\lim_{n\longrightarrow\infty}x_n=x_0\in[-k,k]$ and
$$\lim_{n\longrightarrow\infty}\int_{C+k}f_{(\bar{\alpha}(x_n),\bar{\gamma}(x_n))}(y)dy=(2\pi)^{-1}\lim_{n\longrightarrow\infty}\int_{C+k}\int_{\R}\cos (\xi y) e^{-\bar{\gamma}(x_n)|\xi|^{\bar{\alpha}(x_n)}}d\xi dy=0.$$
Now, by the dominated convergence theorem and continuity of the
functions $\bar{\alpha}(x)$ and $\bar{\gamma}(x)$ we have
\begin{align*}0&=(2\pi)^{-1}\lim_{n\longrightarrow\infty}\int_{C+k}\int_{\R}\cos
(\xi y)e^{-\bar{\gamma}(x_n)|\xi|^{\bar{\alpha}(x_n)}}d\xi dy\\
&=(2\pi)^{-1}\int_{C+k}\int_{\R}\cos (\xi y)
e^{-\bar{\gamma}(x_0)|t|^{\bar{\alpha}(x_0)}}d\xi dy=
\int_{C+k}f_{(\bar{\alpha}(x_0),\bar{\gamma}(x_0))}(y)dy.\end{align*}
This is impossible since $\lambda(C)>0$.
\end{proof}

 For a Markov process
$\process{Y}$ we define a family of operators $\process{P}$  on
$B_b(\R)$ by $P_tf(x):= \mathbb{E}^{x}(f(Y_t))$. Since $\process{Y}$
is a Markov process, the family $\process{P}$ forms a
\emph{semigroup} of linear operators on
$(B_b(\R),||\cdot||_\infty)$, i.e., $P_t\circ P_s=P_{t+s}$ and
$P_0=I$. Furthermore, the semigroup $\process{P}$ is
\emph{contractive} ($||P_tf||_{\infty}\leq||f||_{\infty}$ for all
$f\in B_b(\R)$) and \emph{positivity preserving} ($P_tf\geq 0$
whenever $f\geq0$, $f\in B_b(\R)$).     The process $\process{Y}$ is
said to be a $C_0$-\emph{Feller process} if the semigroup
$\process{P}$ forms a \emph{Feller semigroup}. This means that:
\begin{enumerate}
  \item [(i)] the family $\process{P}$ is a semigroup of linear
operators on the space $C_0(\R)$;
  \item [(ii)] the family $\process{P}$ is \emph{strongly continuous}, i.e., $\lim_{t\longrightarrow0}||P_tf-f||_{\infty}=0$.
\end{enumerate}
The \emph{infinitesimal generator} $\mathcal{A}$ of the semigroup
$\process{P}$ is defined by
$$\mathcal{A}f:=
  \lim_{t\longrightarrow0}\frac{P_tf-f}{t}$$
on $\mathcal{D}_{\mathcal{A}}:=\{f\in B_b(\R):
\lim_{t\longrightarrow0}\frac{P_t f-f}{t} \quad \textrm{exists in
supnorm}\}$. If the set of smooth functions with compact support
$C_c^{\infty}(\R)$ is contained in $\mathcal{D}_{\mathcal{A}}$ and
$\mathcal{A}(C_c^{\infty}(\R))\subseteq C(\R)$, then
$\mathcal{A}|_{C_c^{\infty}(\R)}$ is a \emph{pseudo-differential
operator}, i.e., it can be written in the form
\begin{align}\label{eq:1}\mathcal{A}|_{C_c^{\infty}(\R)}f(x) = -\int_{\R}p(x,\xi)e^{ix\xi}
\widehat{f}(\xi) d\xi,\end{align}  where $\widehat{f}(\xi)=
(2\pi)^{-1} \int_{\R} e^{-ix\xi} f(x) dx$ is the Fourier transform
of $f(x)$ (see \cite[Theorem 3.4]{courege}). The function $p :
\R\times \R\longrightarrow \CC$ is called  the \emph{symbol} of the
pseudo-differential operator. It is measurable and locally bounded
in $(x,\xi)$ and continuous and negative definite as a function of
$\xi$. Hence, by \cite[Theorem 3.7.7]{jacobI}, $\xi\longmapsto
p(x,\xi)$ has for each $x$ the L\'{e}vy-Khinchine representation,
i.e., \begin{align}\label{eq:2}p(x,\xi) =a(x)- ib(x)\xi +
\frac{1}{2}c(x)\xi^{2} -
\int_{\R}\left(e^{iy\xi}-1-iy\xi1_{\{z:|z|\leq1\}}(y)\right)\nu(x,dy),\end{align}
where $a(x)\geq0$, $b(x)\in\R$ and $c(x)\geq0$ are Borel measurable
functions and $\nu(x,\cdot)$ is a Borel kernel on $\R\times
\mathcal{B}(\R)$, such that $\nu(x,\{0\})=0$ and $\int_{\R}(1\wedge
y^{2})\nu(x,dy)<\infty$ holds for all $x\in\R$. The quadruple
$(a(x),b(x),c(x),\nu(x,\cdot))$ is called the
\emph{L\'{e}vy-quadruple} of the pseudo-differential operator
$\mathcal{A}|_{C_c^{\infty}(\R)}$. In the following we  assume,
without loss of generality, that every Feller process  has
c\`adl\`ag paths (see \cite[Theorem III.2.7]{revuzyor}).

\begin{proposition}\label{p2.7}Let $a\neq0$ be arbitrary and let
                                                   $\process{N^{\kappa}}$
                                                   be the Poisson
                                                   process with
                                                   parameter
                                                   $\kappa>0$
                                                   independent of a
                                                     Markov chain
                                                   $\chain{Y}$ on $(\R,\mathcal{B}(\R))$. Then the process $\process{Y^{(a,\kappa)}}$, defined by $Y^{(a,\kappa)}_t:=aY_{N^{\kappa}_t}$, is \begin{enumerate}
                                                                    \item
                                                                    [(i)]
                                                                    a
                                                                    strong
                                                                    Markov
                                                                    process
                                                                    with
                                                                    the strongly
                                                                    continuous
                                                                    semigroup
                                                                    $\process{P^{(a,\kappa)}}$
                                                                    and
                                                                    the
                                                                    infinitesimal
                                                                    generator
                                                                    $$\mathcal{A}^{(a,\kappa)}f(x)=\kappa\int_{\R}(f(y)-f(x))p\left(a^{-1}x,a^{-1}dy\right)$$
                                                                    with
                                                                    the
                                                                    domain
                                                                    $\mathcal{D}_{\mathcal{A}^{(a,\kappa)}}=B_b(\R^{d});$

\item [(ii)] $\lambda$-irreducible and
                                                                   recurrent
                                                                   (respectively
                                                                   H-recurrent)
                                                                    if
                                                                    and
                                                                    only
                                                                    if
                                                                    the
                                                                    chain
                                                                    $\chain{Y}$
                                                                    is
                                                                    $\lambda$-irreducible
                                                                    and
                                                                    recurrent (respectively
                                                                   H-recurrent).
 \end{enumerate}
\end{proposition}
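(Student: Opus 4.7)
The approach is to compute the semigroup explicitly by conditioning on $N^\kappa_t$. Since $Y^{(a,\kappa)}_0=x$ corresponds to $Y_0=a^{-1}x$, independence of $\process{N^\kappa}$ and $\chain{Y}$ gives
\[
P^{(a,\kappa)}_t f(x) \;=\; \mathbb{E}^{a^{-1}x}\!\bigl[f\bigl(aY_{N^\kappa_t}\bigr)\bigr] \;=\; e^{-\kappa t}\sum_{n=0}^{\infty}\frac{(\kappa t)^n}{n!}\,Q^n f(x),
\]
where $Qf(x):=\int_\R f(ay)\,p(a^{-1}x,dy)=\int_\R f(y)\,p(a^{-1}x,a^{-1}dy)$ is a linear contraction on $(B_b(\R),\|\cdot\|_\infty)$. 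Hence $P^{(a,\kappa)}_t=\exp\bigl(t\kappa(Q-I)\bigr)$ is the exponential of a bounded operator on $B_b(\R)$, which is automatically strongly continuous, with infinitesimal generator $\mathcal{A}^{(a,\kappa)}=\kappa(Q-I)$ defined on all of $B_b(\R)$, exactly the stated formula. For the strong Markov property, one observes that $\process{Y^{(a,\kappa)}}$ is a pure-jump continuous-time Markov chain with bounded total jump rate $\kappa$ and embedded transition kernel $p(a^{-1}\cdot,a^{-1}d\cdot)$: between consecutive Poisson jump times $\tau_n,\tau_{n+1}$ the path is constant at $aY_n$. Strong Markovness at a stopping time $T$ then reduces to the memoryless property of the exponential inter-arrival distribution combined with the (discrete) strong Markov property of $\chain{Y}$.

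\textbf{Plan for (ii).} Integrating the series above in $t$ and using $\int_0^{\infty} e^{-\kappa t}(\kappa t)^n/n!\,dt=1/\kappa$ yields the resolvent-type identity
\[
U^{(a,\kappa)}(x,B) \;=\; \frac{1}{\kappa}\, U_Y(a^{-1}x, a^{-1}B), \qquad x\in\R,\ B\in\mathcal{B}(\R).
\]
Because $a\neq 0$, the map $y\mapsto ay$ is a bimeasurable bijection that preserves Lebesgue-null sets, so $U^{(a,\kappa)}(x,B)>0$ for all $x$ whenever $\lambda(B)>0$ iff $U_Y(x,B)>0$ for all $x$ whenever $\lambda(B)>0$, giving the equivalence of $\lambda$-irreducibility; moreover the maximal irreducibility measures coincide up to the pushforward by $a$. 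Applied to the characterization of recurrence via $U(\cdot,B)\equiv\infty$ for $B\in\mathcal{B}^+(\R)$, the same identity together with Proposition~\ref{p2.3}(i) shows that $B$ is recurrent for $\process{Y^{(a,\kappa)}}$ iff $a^{-1}B$ is recurrent for $\chain{Y}$, hence the equivalence of recurrence for the two models.

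\textbf{Plan for H-recurrence and main obstacle.} For H-recurrence I use a direct pathwise identity. Let $\{\tau_n\}$ denote the jump times of $N^\kappa$, so that
\[
\eta^{(a,\kappa)}_B \;=\; \int_0^{\infty} 1_{\{Y^{(a,\kappa)}_t\in B\}}\,dt \;=\; \sum_{n=0}^{\infty}(\tau_{n+1}-\tau_n)\,1_{\{Y_n\in a^{-1}B\}}.
\]
Since the inter-arrival times $\tau_{n+1}-\tau_n$ are a.s.\ strictly positive and finite and independent of $\chain{Y}$, this sum is a.s.\ infinite iff $\eta_{a^{-1}B}^Y=\sum_n 1_{\{Y_n\in a^{-1}B\}}=\infty$. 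Consequently $Q^{(a,\kappa)}(x,B)=Q_Y(a^{-1}x,a^{-1}B)$, which combined with the scaling invariance of $\lambda$-null sets gives the equivalence of H-recurrence. The main obstacle is the strong Markov property in (i): since no continuity or Feller hypothesis is placed on $p(x,dy)$, one cannot invoke generic Feller-process theory and must verify strong Markovness from first principles via the explicit Poisson-clock construction. Once that is in hand, everything else reduces to careful bookkeeping with the scaling factor $a$ and the shifted initial condition $Y_0=a^{-1}x$.
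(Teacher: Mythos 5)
Your proposal is correct and follows essentially the same route as the paper: both rest on the Poisson-mixture representation $p^{t}(x,dy)=e^{-\kappa t}\sum_{n\geq0}\frac{(\kappa t)^{n}}{n!}p^{n}(a^{-1}x,a^{-1}dy)$, from which the bounded generator, strong continuity, and the transfer of irreducibility, recurrence and H-recurrence under the scaling $y\mapsto ay$ and the exponential time change all follow. You merely make explicit some steps the paper leaves as "easily follows" (the resolvent identity $U^{(a,\kappa)}(x,B)=\kappa^{-1}U_Y(a^{-1}x,a^{-1}B)$ and the occupation-time identity for $Q$, where the paper instead uses the hitting-probability functional $L$), which is a cosmetic rather than substantive difference.
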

\begin{proof}
\begin{itemize}
               \item [(i)] First, note that if $\chain{Y}$ is a
               Markov chain with respect to the family of probability
               measures
               $\{\mathbb{P}^{x}\}_{x\in\R^{d}}$, then
               $\chain{aY}$ is a
               Markov chain with respect to the family of probability
               measures
               $\{\mathbb{Q}^{x}:=\mathbb{P}^{a^{-1}x}\}_{x\in\R}$.
Hence, the process $\process{Y^{(a,\kappa)}}$ is  a strong Markov
process. Clearly, its transition
               kernel is given by
               $$p^{t}(x,dy)=e^{-\kappa
               t}\sum_{n=0}^{\infty}\frac{(\kappa
               t)^{n}}{n!}p^{n}\left(a^{-1}x,a^{-1}dy\right).$$
               Now, the claim easily follows.

               \item [(ii)] The equivalence of $\lambda$-irreducibility and
               recurrence between the process
               $\process{Y^{(a,\kappa)}}$ and the  chain $\chain{Y}$ easily follows from the definition and the fact that the exponential distribution has finite all moments. In the
               case of H-recurrence  we have
              $$L^{(a,\kappa)}(x,B)=\mathbb{Q}^{x}(\tau^{(a,\kappa)}_B<\infty)=\mathbb{P}^{a^{-1}x}(\tau_{a^{-1}B}<\infty)=L(a^{-1}x,a^{-1}B).$$
              Hence, the process $\process{Y^{(a,\kappa)}}$ is H-recurrent  if and only if the  chain $\chain{Y}$ is
               H-recurrent.
\end{itemize}
\end{proof}

It is natural to expect that if the functions
$\alpha:\R\longrightarrow(0,2)$ and
$\gamma:\R\longrightarrow(0,\infty)$ are continuous, the  process
$\process{Y^{\alpha(x)}}:=\{aX^{\alpha(x)}_{N^{\kappa}_t}\}_{t\geq0}$
is a $C_0$-Feller process. We need the following lemma.

\begin{lemma}\label{l2.8}Let $0<\varepsilon<2$ and $C>0$ be arbitrary, and let $\alpha:\R\longrightarrow(\varepsilon,2)$ and
$\gamma:\R\longrightarrow(0,C)$ be arbitrary
 functions. Furthermore, let
$\{f_{(\alpha(x),\gamma(x))}\}_{x\in\R}$ be a family of
S$\alpha(x)$S densities given by the following characteristic
functions $\varphi(x;\xi)=
                                                       \exp(-\gamma(x)|\xi|^{\alpha(x)}).$
                                                       Then the following uniformity condition
                                                       holds
\begin{align}\label{eq:2.3}\lim_{b\longrightarrow\infty}\sup_{x\in\R}\int_b^{\infty}f_{(\alpha(x),\gamma(x))}(y)dy=0.\end{align}
Moreover,
\begin{align*}\lim_{|y|\longrightarrow\infty}\sup_{\{x\in\R:\,\alpha(x)<1\}}\left|f_{(\alpha(x),\gamma(x))}(y)\frac{|y|^{\alpha(x)+1}}{c(x)}-1\right|=0,\end{align*}
 where
$$c(x)=\frac{\gamma(x)}{\pi}\Gamma(\alpha(x)+1)\sin\left(\frac{\pi\alpha(x)}{2}\right).$$
\end{lemma}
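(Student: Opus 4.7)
The plan is to use the scaling property of symmetric stable laws to reduce both claims to uniform estimates about the unit-scale family $\{g_\alpha\}$, where $g_\alpha:=f_{(\alpha,1)}$. Scaling gives
\[
f_{(\alpha(x),\gamma(x))}(y)=\gamma(x)^{-1/\alpha(x)}\,g_{\alpha(x)}\!\left(\gamma(x)^{-1/\alpha(x)}y\right),
\]
and since $\alpha(x)\in(\varepsilon,2)$ and $\gamma(x)\in(0,C)$ one has the uniform bound $\gamma(x)^{1/\alpha(x)}\leq M:=\max\{C^{1/\varepsilon},C^{1/2},1\}$. Hence the substitution $u=\gamma(x)^{-1/\alpha(x)}y$ converts a tail at $b$ into a unit-scale tail at a point no smaller than $b/M$, and an analogous substitution $z=\gamma(x)^{-1/\alpha(x)}y$ yields
\[
\frac{f_{(\alpha(x),\gamma(x))}(y)\,|y|^{\alpha(x)+1}}{c(x)}=\frac{g_{\alpha(x)}(z)\,|z|^{\alpha(x)+1}}{\tilde c(\alpha(x))},\qquad\tilde c(\alpha):=\frac{\Gamma(\alpha+1)\sin(\pi\alpha/2)}{\pi},
\]
with $|z|\geq|y|/M\to\infty$.

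For \eqref{eq:2.3} I would then invoke Markov's inequality with exponent $p:=\varepsilon/2$. Since $p<\alpha$ for every $\alpha\in[\varepsilon,2]$ and the $p$-th absolute moment of a unit-scale $S\alpha S$ variable $Z_\alpha$ admits a closed-form expression (see \cite[Property 1.2.17]{taqqu}) continuous in $\alpha$ on the compact set $[\varepsilon,2]$, there is a finite $K$ with $\mathbb{E}|Z_\alpha|^p\leq K$ for all such $\alpha$. Therefore
\[
\int_b^\infty f_{(\alpha(x),\gamma(x))}(y)\,dy\leq\int_{b/M}^\infty g_{\alpha(x)}(u)\,du\leq\frac{KM^p}{b^p}\xrightarrow{b\to\infty}0,
\]
uniformly in $x\in\R$, which proves \eqref{eq:2.3}.

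For the second claim I would invoke the convergent series representation of $g_\alpha$ valid for $\alpha\in(0,1)$,
\[
g_\alpha(z)=\frac{1}{\pi|z|}\sum_{k=1}^\infty\frac{(-1)^{k+1}}{k!}\Gamma(k\alpha+1)\sin\!\left(\frac{k\pi\alpha}{2}\right)|z|^{-k\alpha},
\]
whose $k=1$ term equals exactly $\tilde c(\alpha)|z|^{-\alpha-1}$. The modulus of the remainder is dominated termwise by $\Gamma(k\alpha+1)/[k!\,\Gamma(\alpha+1)\sin(\pi\alpha/2)]\cdot|z|^{-\alpha(k-1)}$. On the compact parameter range $[\varepsilon,1]$ the denominator $\Gamma(\alpha+1)\sin(\pi\alpha/2)$ is bounded below by some $c_0>0$; for $k\alpha+1\geq 2$ the monotonicity of $\Gamma$ on $[2,\infty)$ yields $\Gamma(k\alpha+1)\leq k!$; and the finitely many indices $k<1/\varepsilon$ are handled by continuity in $\alpha$. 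A uniform constant $K_0$ thus bounds $\Gamma(k\alpha+1)/k!$ for all $k\geq 2$ and $\alpha\in[\varepsilon,1]$, and summing the resulting geometric series gives
\[
\left|\frac{g_\alpha(z)|z|^{\alpha+1}}{\tilde c(\alpha)}-1\right|\leq\frac{K_0}{c_0}\frac{|z|^{-\alpha}}{1-|z|^{-\alpha}}\leq\frac{2K_0}{c_0}|z|^{-\varepsilon}
\]
for $|z|^\varepsilon\geq 2$, which is the desired uniform decay.

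The main technical obstacle is maintaining uniform control of the remainder as $\alpha\uparrow 1$, where naive bounds on $\Gamma(k\alpha+1)/k!$ could grow without bound; this is precisely where the monotonicity of $\Gamma$ on $[2,\infty)$ (used for large $k$) combined with compactness (for the finitely many small $k$) is essential.
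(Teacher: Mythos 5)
Your proof is correct and follows essentially the same route as the paper's: a fractional-moment Markov/Chebyshev bound with exponent below $\varepsilon$ for the tail estimate \eqref{eq:2.3}, and the convergent series expansion of the S$\alpha$S density (valid for $\alpha<1$) with a uniform geometric-series bound on the remainder for the second claim. The only difference is presentational --- you first rescale to the unit-scale family $g_\alpha$ and track $\gamma(x)$ through the factor $M$, whereas the paper keeps $\gamma(x)\le C$ inside the series terms directly; the key ingredients are identical.
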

\begin{proof}
Let $0<\rho<\varepsilon$ be arbitrary and let $\{Z_x\}_{x\in \R}$ be
a family of random variables with S$\alpha(x)$S distributions with
densities $\{f_{(\alpha(x),\gamma(x))}\}_{x\in \R}$. Then we have
$$\sup_{x\in \R}\int_b^{\infty}f_{(\alpha(x),\gamma(x))}(y)dy=\sup_{x\in \R}\mathbb{P}(Z_x\geq b)\leq\sup_{x\in
\R}\mathbb{P}(|Z_x|\geq b)\leq\frac{1}{b^{\rho}}\sup_{x\in
\R}\mathbb{E}|Z_x|^{\rho}.$$ Since $\sup_{x\in
\R}\mathbb{E}|Z_x|^{\rho}$ is finite (see \cite[page 163]{sato}),
the first claim easily follows.

To prove the second part of lemma  we use \cite[Theorem
2.4.2]{zolotarev}. Since $\alpha(x)<1$, for $|y|\geq1$ we have
\begin{align*}&\left|f_{(\alpha(x),\gamma(x))}(y)\frac{|y|^{\alpha(x)+1}}{c(x)}-1\right|\\
&=\frac{1}{\Gamma(\alpha(x)+1)\sin\left(\frac{\pi\alpha(x)}{2}\right)}\left|\sum_{n=2}^{\infty}(-1)^{n+1}\frac{\Gamma(n\alpha(x)+1)}{n!}\sin\left(\frac{n\pi\alpha(x)}{2}\right)\left(\frac{\gamma(x)}{|y|^{\alpha(x)}}\right)^{n-1}\right|\\
&\leq\frac{1}{\Gamma(\varepsilon+1)\sin\left(\frac{\pi\varepsilon}{2}\right)}\sum_{n=1}^{\infty}\left(\frac{C}{|y|^{\varepsilon}}\right)^{n}.\end{align*}
Now, by taking $\sup_{\{x\in\R:\,\alpha(x)<1\}}$ and letting
$|y|\longrightarrow\infty$, we get the desired result.
\end{proof}

\begin{proposition}\label{p2.9}Let $0<\varepsilon<2$ and $C>0$ be arbitrary,  let $\alpha:\R\longrightarrow(\varepsilon,2)$ and
$\gamma:\R\longrightarrow(0,C)$ be  continuous functions.
Furthermore, let $a\neq0$ be arbitrary and let
                                                   $\process{N^{\kappa}}$
                                                   be the Poisson
                                                   process with
                                                   parameter
                                                   $\kappa>0$
                                                   independent of
                                                   the  stable-like chain
$\chain{X^{\alpha(x)}}$ (the chain given by (\ref{eq:1.2})). Then
the process
$\process{Y^{\alpha(x)}}:=\{aX^{\alpha(x)}_{N^{\kappa}_t}\}_{t\geq0}$
is
\begin{enumerate}
  \item [(i)] a $C_0$-Feller process with
                                                                    the
                                                                    symbol
                                                                    $$p(x,\xi)=a^{-1}\kappa\left(1-\int_{\R}e^{i\xi y}f_{(\alpha(a^{-1}x),\gamma(a^{-1}x))}(a^{-1}y)dy\right)$$
                                                                    and the L\'{e}vy
quadruple $(0,0,0,a^{-1}\kappa
f_{(\alpha(a^{-1}x),\gamma(a^{-1}x))}(a^{-1}y)dy),$ and it
 satisfies the  $C_b$-Feller property and the
strong Feller property;
  \item [(ii)] a
                                                                    T-model.
\end{enumerate}
\end{proposition}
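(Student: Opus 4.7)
My plan is to leverage Proposition~2.7(i), which already supplies the strong Markov property, the semigroup representation
$P_t f(x) = e^{-\kappa t}\sum_{n\geq 0} (\kappa t)^n (n!)^{-1} T^n f(x)$ with $Tf(x) = \int_\R f(y)\,p(a^{-1}x, a^{-1}dy)$,
and the generator $\mathcal{A}^{(a,\kappa)} f(x) = \kappa \int (f(y)-f(x))\,p(a^{-1}x, a^{-1}dy)$ on $B_b(\R)$. To identify the symbol and L\'evy quadruple I would change variable $z = y-x$ to put the generator in L\'evy-type form with jump measure $\nu(x,dz) = a^{-1}\kappa\, f_{(\alpha(a^{-1}x),\gamma(a^{-1}x))}(a^{-1}z)\,dz$; this measure is symmetric in $z$ and finite of total mass $\kappa$, so the truncation term $iz\xi \mathbf{1}_{\{|z|\le 1\}}$ drops out of the L\'evy--Khinchine representation and we obtain $p(x,\xi) = \nu(x,\R) - \int e^{iz\xi}\nu(x,dz)$, which rearranges to the claimed formula and yields the L\'evy quadruple $(0,0,0,\nu(x,dz))$.

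For the Feller properties the whole burden reduces to the one-step operator $T$. After the substitution $y = x + az$ one has $Tf(x) = \int_\R f(x+az)\, f_{(\alpha(a^{-1}x),\gamma(a^{-1}x))}(z)\,dz$, and continuity of $\alpha,\gamma$ together with continuity of $(x,\xi)\mapsto e^{-\gamma(x)|\xi|^{\alpha(x)}}$ and Lebesgue's dominated convergence theorem (exactly as in the proof of Proposition~2.5) yield continuity of $Tf$ for every $f\in B_b(\R)$; iterating and plugging into the Poisson series then gives both the $C_b$-Feller and the strong Feller properties for $P_t$, $t>0$, while strong continuity $\|P_t f - f\|_\infty \to 0$ follows from $\|T\|\le 1$ and the tail of the exponential series.

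The main obstacle is showing $Tf\in C_0(\R)$ when $f\in C_0(\R)$, since the density factor itself varies with $x$ and direct dominated convergence fails. The remedy is the uniform tail bound (2.3) of Lemma~2.8, whose hypotheses $\alpha\in(\varepsilon,2)$ and $\gamma\in(0,C)$ match exactly those of the proposition. Splitting $Tf(x) = \int_{|z|\le R} + \int_{|z|>R}$, the first integral tends to $0$ as $|x|\to\infty$ by dominated convergence (because $f(x+az)\to 0$ pointwise), while the second is bounded by $\|f\|_\infty \sup_x \int_{|z|>R} f_{(\alpha(a^{-1}x),\gamma(a^{-1}x))}(z)\,dz$, which (2.3) forces to $0$ as $R\to\infty$; choosing $R$ large first and then letting $|x|\to\infty$ finishes the argument.

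For part (ii), the T-model property follows essentially for free from part (i) by isolating the absolutely continuous part of $p^1$. I would take $a_{\mathbb{T}}(\cdot) := \delta_1(\cdot)$ and $T(x,B) := p^1(x,B) - e^{-\kappa}\mathbf{1}_B(x)$, i.e., the contribution from the terms $n\ge 1$ in the Poisson series for $p^1$; then $T(x,\R) = 1 - e^{-\kappa} > 0$ and $\int p^t(x,B)\,a_{\mathbb{T}}(dt) = p^1(x,B) \ge T(x,B)$ trivially. Lower semicontinuity (in fact continuity) of $x\mapsto T(x,B)$ for every Borel $B$ follows from continuity in the initial state of the $n$-step transition densities of $\chain{X^{\alpha(x)}}$ for $n\ge 1$, established via the same dominated convergence argument used for $T$ above.
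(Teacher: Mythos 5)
Your argument is sound except at one point: the strong Feller property. Writing $P_tf=e^{-\kappa t}\sum_{n\ge0}\frac{(\kappa t)^{n}}{n!}T^{n}f$ and invoking continuity of $T^{n}f$ for $f\in B_b(\R)$ only controls the terms with $n\ge1$; the $n=0$ term is $e^{-\kappa t}f$ itself, which is not continuous for a general bounded Borel $f$. So ``plugging into the Poisson series'' does give the $C_b$-Feller property (there $f$ is continuous to begin with), but it cannot give the strong Feller property. Moreover this is not merely a gap in your write-up: since the $n$-step kernels are absolutely continuous for $n\ge1$, one has $P_t1_{\{x_0\}}(x)=e^{-\kappa t}1_{\{x_0\}}(x)$, which is discontinuous, so the subordinated process retains an atom at its starting point and fails the strong Feller property as defined in the paper. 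The paper does not argue this point directly but delegates it to a citation of \cite[Theorem 1.1]{rene-wang-feller}; what your computation actually establishes, and what is used downstream (for the T-model and for verifying (\ref{eq:2.1})), is continuity of $x\longmapsto p^{t}(x,B)-e^{-\kappa t}1_B(x)$ for every Borel set $B$ and every $t>0$, and you should state your conclusion in that form rather than as the strong Feller property.

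The remainder is correct and in places more self-contained than the paper's proof. For the symbol you derive the L\'evy quadruple by a change of variables together with the symmetry and finiteness of the jump measure (so the compensating term $iy\xi1_{\{|y|\le1\}}$ integrates to zero); the paper simply points at (\ref{eq:1})--(\ref{eq:3}). For $P_t(C_0(\R))\subseteq C_0(\R)$ you split the integral at radius $R$ and use the uniform tail estimate (\ref{eq:2.3}) of Lemma \ref{l2.8}, whereas the paper approximates $f$ by a compactly supported $f_\epsilon$ and applies the same lemma to the translated support; the two arguments are equivalent and both hinge on Lemma \ref{l2.8}. For part (ii) you construct the $T$-kernel explicitly as $T(x,B)=p^{1}(x,B)-e^{-\kappa}1_B(x)$ with $a(\cdot)=\delta_1(\cdot)$, which has total mass $1-e^{-\kappa}>0$ and is (lower semi)continuous in $x$ by continuity of the $n$-step kernels for $n\ge1$; this replaces the paper's appeal to \cite[Theorem 7.1]{tweedie} by a short direct verification and has the merit of isolating exactly the absolutely continuous component that makes the process a T-model.
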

\begin{proof} By Proposition \ref{p2.7}, the semigroup of
the process $\process{Y^{\alpha(x)}}$ is given by
$$P^{\alpha(x)}_tf(x)=e^{-\kappa
               t}\sum_{n=0}^{\infty}\frac{(\kappa
               t)^{n}}{n!}\int_{\R}p^{n}\left(a^{-1}x,a^{-1}dy\right)f(y),$$ for $f\in
               B_b(\R)$, and the generator
\begin{align}\label{eq:3}\mathcal{A}^{\alpha(x)}f(x)=a^{-1}\kappa\int_{\R}(f(y+x)-f(x))f_{(\alpha(a^{-1}x),\gamma(a^{-1}x))}(a^{-1}y)dy\end{align}
 with the domain $\mathcal{D}_{\mathcal{A}^{\alpha(x)}}=B_b(\R).$
Furthermore, it is shown that the semigroup is strongly continuous.
\begin{enumerate}
  \item [(i)] The $C_b$-Feller property easily follows from (\ref{eq:2.2}) and Fatou's lemma.
Now, let us  show that $P^{\alpha(x)}_t(C_0(\R))\subseteq C_0(\R)$
for all $t\in\R_+$. For $f\in C_0(\R)$, by the $C_b$-Feller
property, $P^{\alpha(x)}_tf\in C_b(\R)$ for all $t\in\R_+$. Next we
show that $P^{\alpha(x)}_tf(x)$ vanishes at infinity for all $f\in
C_0(\R)$ and all $t\in\R_+$. Let $f\in C_0(\R)$ and $\epsilon>0$ be
arbitrary such that $||f||_\infty\leq M$, for some $M\geq0$. Since
$C_c(\R)$ is dense in $(C_0(\R),||\cdot||_\infty)$, there exists
$f_\epsilon\in C_c(\R)$ such that
$||f-f_\varepsilon||_\infty<\epsilon$. We have
\begin{align*}\left|\int_\R p(a^{-1}x,a^{-1}dy)f(y)\right|&\leq\int_\R
p(a^{-1}x,a^{-1}dy)|f(y)|<\int_\R
p(a^{-1}x,a^{-1}dy)|f_\epsilon(y)|+\epsilon\\&=a^{-1}\int_{\rm{supp}\,\it{f}_\epsilon-x}f_{\alpha(a^{-1}x),\gamma(a^{-1}x))}(a^{-1}y)|
f_\epsilon(y+x)|dy+\epsilon\\&\leq
a^{-1}(M+\epsilon)\int_{\rm{supp}\,\it{f}_\epsilon-x}f_{\alpha(a^{-1}x),\gamma(a^{-1}x))}(a^{-1}y)dy+\epsilon.\end{align*}
Since $\rm{supp}\,\it{f}_\epsilon$ is a compact set, by applying
Lemma \ref{l2.8}, the function $x\longmapsto\int_\R
p(a^{-1}x,a^{-1}dy)f(y)$ is a $C_0(\R)$ function. Thus, by the
dominated convergence theorem we have the claim,  i.e., the process
$\process{Y^{\alpha(x)}}$ is a $C_0$-Feller process.

The second part of the proposition easily follows from the relations
(\ref{eq:1}), (\ref{eq:2}) and (\ref{eq:3}), and the strong Feller
property follows from \cite[Theorem 1.1]{rene-wang-feller}
 \item [(ii)] The claim follows from \cite[Theorem 7.1]{tweedie}.
\end{enumerate}
\end{proof}

Let us  recall the notion of characteristics of a semimartingale
(see \cite{jacod} or \cite{schnurr}). Let
$(\Omega,\mathcal{F},\process{F},\mathbb{P},\process{S})$,
$\process{S}$ in the sequel, be a semimatingale and let
$h:\R\longrightarrow\R$ be a truncation function (i.e., a continuous
bounded function such that $h(x)=x$ in a neighborhood of the
origin).
 We  define two processes
$$\check{S}(h)_t:=\sum_{s\leq t}(\Delta S_s-h(\Delta S_s))\quad
\textrm{and} \quad S(h)_t:=S_t-\check{S}(h)_t,$$ where the process
$\process{\Delta S}$ is defined by $\Delta S_t:=S_t-S_{t-}$ and
$\Delta S_0:=S_0$. The process $\process{S(h)}$ is a special
semimartingale. Hence, it admits the unique decomposition
\begin{align}\label{eq:2.4}S(h)_t=S_0+M(h)_t+B(h)_t,\end{align} where $\process{M(h)}$ is a local
martingale and $\process{B(h)}$ is a predictable process of bounded
variation.

\begin{definition}
 Let $\process{S}$  be a
semimartingale and let $h:\R\longrightarrow\R$ be the truncation
function. Furthermore, let $\process{B(h)}$  be the predictable
process
 of bounded variation appearing in (\ref{eq:2.4}),  let $N(\omega,ds,dy)$ be the
compensator of the jump measure
$$\mu(\omega,ds,dy)=\sum_{s:\Delta S_s(\omega)\neq 0}\delta_{(s,\Delta S_s(\omega))}(ds,dy)$$ of the process
$\process{S}$ and let $\process{C}$ be the quadratic co-variation
process for $\process{S^{c}}$ (continuous martingale part of
$\process{S}$), i.e.,
$$C_t=\langle S^{c}_t,S^{c}_t\rangle.$$  Then $(B,C,N)$ is called
the \emph{characteristics} of the semimartingale $\process{S}$
 (relative to $h(x)$). If we put $\tilde{C}(h)_t:=\langle
 M(h)_t,M(h)_t\rangle$, where $\process{M(h)}$ is the local martingale
 appearing in (\ref{eq:2.4}), then $(B,\tilde{C},N)$ is called the \emph{modified
 characteristics} of the semimartingale $\process{S}$ (relative to $h(x)$).
\end{definition}

\begin{proposition}\label{p2.11}
Let $a\neq0$ be arbitrary and let $\{f_x\}_{x\in\R}$ be a family of
probability densities on the real line such that $x\longmapsto
f_x(y)$ is a Borel measurable function for all $y\in\R$. Let
$\chain{Y}$ be a Markov chain  on $(\R,\mathcal{B}(\R))$, with
respect to the filtration $\chain{\mathcal{F}}$, given by the
 transition kernel $p(x,dy):=f_x(y-x)dy$. Furthermore, let
$\process{Y^{(a,\kappa)}}$ be the process defined by
$Y^{\kappa}_t:=aY_{N^{\kappa}_t},$
                                                   where
                                                   $\process{N^{\kappa}}$
                                                   is the Poisson
                                                   process, with
respect to the  filtration $\process{\mathcal{G}}$, with
                                                   parameter
                                                   $\kappa>0$
                                                   independent of
                                                   the  chain
                                                   $\chain{Y}.$
Then the process $\process{Y^{(a,\kappa)}}$  is a semimartingale
with respect to the filtration
$\{\sigma\{\mathcal{F}_\infty\cup\mathcal{G}_t\}\}_{t\geq0}$, where
$\mathcal{F}_{\infty}=\bigcup_{n=0}^{\infty}\mathcal{F}_n$, and its
characteristics and the modified characteristics, relative to the
truncation function $h(x)$, are given by:
\begin{align*}
B^{(a,\kappa)}_t&=a^{-1}\kappa\int_0^{t}\int_{\R}h(y)f_{Y_{N^{\kappa}_{s}}}(a^{-1}y)dyds,\\
C^{(a,\kappa)}_t&=0,\\
\tilde{C}^{(a,\kappa)}_t&= a^{-1}\kappa\int_0^{t}\int_{\R}h^{2}(y)f_{Y_{N^{\kappa}_{s}}}(a^{-1}y)dyds\quad and \\
N^{(a,\kappa)}(ds,dy)&=a^{-1}\kappa
f_{Y_{N^{\kappa}_{s}}}(a^{-1}y)dyds.
\end{align*}
\end{proposition}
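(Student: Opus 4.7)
The plan is to read off the characteristics from the piecewise-constant jump structure of $\process{Y^{(a,\kappa)}}$. Because the Poisson process $\process{N^{\kappa}}$ has only finitely many jumps on any compact interval, $Y^{(a,\kappa)}_t=aY_{N^{\kappa}_t}$ is a c\`adl\`ag process that is piecewise constant, hence of locally finite variation and therefore a semimartingale in any filtration to which it is adapted, including the one stated. Since the paths are piecewise constant, the continuous martingale part vanishes identically, which gives $C^{(a,\kappa)}_t\equiv 0$.

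The heart of the argument is the identification of the compensator $N^{(a,\kappa)}$. Writing $T_n$ for the $n$-th jump time of $\process{N^{\kappa}}$, the jump measure of $\process{Y^{(a,\kappa)}}$ is
\[
\mu^{(a,\kappa)}(ds,dy)=\sum_{n\geq 1}\delta_{(T_n,\,a(Y_n-Y_{n-1}))}(ds,dy).
\]
For any bounded predictable $f(s,y)$ the elementary identity $\sum_n\mathbf{1}_{\{T_n\leq t\}}g(T_n)=\int_0^t g(s)\,dN^{\kappa}_s$ yields
\[
\int_0^t\!\!\int_{\R}f(s,y)\,\mu^{(a,\kappa)}(ds,dy)=\int_0^t f\bigl(s,\,a(Y_{N^{\kappa}_{s-}+1}-Y_{N^{\kappa}_{s-}})\bigr)\,dN^{\kappa}_s.
\]
By the Markov property of $\chain{Y}$ together with the independence of $\chain{Y}$ and $\process{N^{\kappa}}$, the conditional density of the increment $Y_{N^{\kappa}_{s-}+1}-Y_{N^{\kappa}_{s-}}$ (given the strict past) is $f_{Y_{N^{\kappa}_{s-}}}$. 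Taking dual predictable projections and using that $\process{N^{\kappa}}$ has compensator $\kappa\,ds$ then identifies
\[
N^{(a,\kappa)}(ds,dy)=a^{-1}\kappa\,f_{Y_{N^{\kappa}_{s-}}}(a^{-1}y)\,dy\,ds,
\]
which agrees with the stated formula since $N^{\kappa}_{s-}=N^{\kappa}_s$ for Lebesgue-almost every $s$. As an independent cross-check, this is precisely the L\'evy kernel one reads off from the generator $\mathcal{A}^{(a,\kappa)}$ computed in Proposition~\ref{p2.7}(i).

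With the compensator in hand, the remaining formulas are forced. The predictable finite variation part of the special semimartingale $\process{Y^{(a,\kappa)}(h)}$ is
\[
B^{(a,\kappa)}_t=\int_0^t\!\!\int_{\R}h(y)\,N^{(a,\kappa)}(ds,dy),
\]
which gives the claimed expression for $B^{(a,\kappa)}$; and since $C^{(a,\kappa)}\equiv 0$, the modified characteristic reduces to $\tilde C^{(a,\kappa)}_t=\int_0^t\!\int_{\R}h^{2}(y)\,N^{(a,\kappa)}(ds,dy)$, matching the stated formula for $\tilde C^{(a,\kappa)}$. The main delicate step is the dual predictable projection itself: in the enlarged filtration $\{\sigma(\mathcal{F}_{\infty}\cup\mathcal{G}_{t})\}_{t\geq 0}$ the jumps $a(Y_n-Y_{n-1})$ are already $\mathcal{F}_{\infty}$-measurable, so one must use the absolute continuity of the transition kernel $p(x,dy)=f_x(y-x)dy$, together with the independence and Markov structure above, to justify that the absolutely continuous representation of $N^{(a,\kappa)}$ given in the proposition is indeed the correct predictable compensator appearing in the semimartingale characteristics.
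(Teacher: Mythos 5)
Your route is genuinely different from the paper's. The paper never touches the jump measure directly: it takes the generator $\mathcal{A}^{(a,\kappa)}$ from Proposition \ref{p2.7}, invokes Ethier--Kurtz, Proposition IV.1.7, to conclude that $M^{f}_t=f(Y^{(a,\kappa)}_t)-f(Y^{(a,\kappa)}_0)-\int_0^{t}\mathcal{A}^{(a,\kappa)}f(Y^{(a,\kappa)}_{s-})ds$ is a martingale, splits $\mathcal{A}^{(a,\kappa)}f$ by means of the truncation function, and then reads off the characteristics from Jacod--Shiryaev, Proposition II.2.17 and Theorem II.2.42. You instead write down the jump measure explicitly and compute its dual predictable projection from the compensator $\kappa\,ds$ of $\process{N^{\kappa}}$ together with the Markov property, and then derive $B^{(a,\kappa)}$ and $\tilde{C}^{(a,\kappa)}$ from the compensator. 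That last derivation is correct and arguably more transparent than the paper's (for $\tilde{C}^{(a,\kappa)}$ you should note that $N^{(a,\kappa)}(\{s\}\times\R)=0$, so the second-order correction term in Theorem II.2.42 vanishes), and your cross-check against the generator of Proposition \ref{p2.7} is exactly the bridge between the two arguments.

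The genuine gap sits precisely at the point you flag in your last sentence and then wave away. In the filtration $\mathcal{H}_t:=\sigma\{\mathcal{F}_\infty\cup\mathcal{G}_t\}$ the whole chain is $\mathcal{H}_0$-measurable, so conditionally on the strict past at time $s$ the increment $Y_{N^{\kappa}_{s-}+1}-Y_{N^{\kappa}_{s-}}$ is a known constant, not a random variable with density $f_{Y_{N^{\kappa}_{s-}}}$; absolute continuity of the transition kernel cannot repair this. Concretely, the $\mathcal{H}$-predictable integrand $W(s,y)=1_{\{y=a(Y_{N^{\kappa}_{s-}+1}-Y_{N^{\kappa}_{s-}})\}}$ satisfies $\mathbb{E}^{x}\left[W\ast\mu^{(a,\kappa)}_t\right]=\kappa t$ while $\mathbb{E}^{x}\left[W\ast N^{(a,\kappa)}_t\right]=0$, so the measure $N^{(a,\kappa)}$ in the statement is \emph{not} the $\mathcal{H}$-compensator of the jump measure; the true $\mathcal{H}$-compensator is $\kappa\,\delta_{a(Y_{N^{\kappa}_{s-}+1}-Y_{N^{\kappa}_{s-}})}(dy)\,ds$. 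Your computation, and the formulas in the proposition, are correct for the natural filtration of $\process{Y^{(a,\kappa)}}$, where the strict past reveals only $Y_0,\dots,Y_{N^{\kappa}_{s-}}$ and the Markov property gives exactly the conditional density you use. The paper's own proof suffers from the same mismatch (the Dynkin martingale supplied by Ethier--Kurtz is a martingale for the process's own filtration, not for $\{\mathcal{H}_t\}_{t\geq0}$), so the correct repair is to change the filtration in the statement; the justification you promise for the enlarged filtration does not exist.
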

\begin{proof}
Clearly, the process $\process{Y^{(a,\kappa)}}$ is a semimartingale.
By Proposition \ref{p2.7},  the infinitesimal generator of the
process $\process{Y^{(a,\kappa)}}$ is given by
$\mathcal{A}^{(a,\kappa)}f(x)=a^{-1}\kappa\int_{\R}(f(y+x)-f(x))f_{a^{-1}x}(a^{-1}y)dy,$
 $f\in B_b(\R)$. Furthermore, by \cite[Poposition IV.1.7]{ethier}, for every $f\in B_b(\R)$ the
process
$$M^{f}_t:=f(Y^{(a,\kappa)}_t)-f(Y^{(a,\kappa)}_0)-\int_0^{t}\mathcal{A}^{(a,\kappa)}f(Y^{(a,\kappa)}_{s-})ds$$ is a
martingale. Let $h(x)$ be the truncation function and let $f\in
C_b^{1}(\R)$. Then $\process{M^{f}}$ can be rewritten in the
following form
\begin{align*}
M^{f}_t&=f(Y^{(a,\kappa)}_t)-f(Y^{(a,\kappa)}_0)-a^{-1}\kappa\int_0^{t}\int_{\R}\left(f(y+Y^{(a,\kappa)}_{s-})-f(Y^{(a,\kappa)}_{s-})\right)f_{Y_{N^{\kappa}_{s-}}}(a^{-1}y)dyds\\
&=f(Y^{(a,\kappa)}_t)-f(Y^{(a,\kappa)}_0)-a^{-1}\kappa\int_0^{t}\int_{\R}f'(Y^{(a,\kappa)}_{s-})h(y)f_{Y_{N^{\kappa}_{s-}}}(a^{-1}y)dyds\\
&\ \ \ \
-a^{-1}\kappa\int_0^{t}\int_{\R}\left(f(y+Y^{(a,\kappa)}_{s-})-f(Y^{(a,\kappa)}_{s-})-f'(Y^{(a,\kappa)}_{s-})h(y)\right)f_{Y_{N^{\kappa}_{s-}}}(a^{-1}y)dyds.\end{align*}
Now, from \cite[Proposition II.2.17 and Theorem II.2.42]{jacod}, the
claim follows.
\end{proof}

We refer the reader to \cite{jacod, schilling, schnurr} for more
details about characteristics of a semimartingale and connection
with Feller processes.

As we know, the recurrence property of  S$\alpha$S random walk,
given by the characteristic function
$\varphi(\xi)=\exp(-\gamma|\xi|^{\alpha})$, depends only on the
index of stability $\alpha\in(0,2]$ and it does not depend on the
scaling constant $\gamma\in(0,\infty)$. In the following proposition
we show that this is also the case with the stable-like chain
$\chain{X^{\alpha(x)}}$ (the chain given by (\ref{eq:1.2})).

\begin{proposition}\label{p2.13}
 Let $\chain{X^{\alpha(x)}}$ be the
stable-like chain defined in Proposition \ref{p2.9}. Furthermore,
let $c>0$ be arbitrary and let $\chain{X^{(\alpha(x),c)}}$ be the
stable-like chain which we get by replacing the scaling function
$\gamma(x)$ by the scaling function $c\gamma(x)$.  Then the
stable-like chain $\chain{X^{(\alpha(x),c)}}$ is recurrent if and
only if the stable-like chain $\chain{X^{\alpha(x)}}$ is recurrent.
\end{proposition}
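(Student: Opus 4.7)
The plan is to lift the problem to the Poisson-subordinated strong Markov processes and exploit the scaling symmetry of symmetric $\alpha$-stable laws.

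First, I would apply Proposition~\ref{p2.7}(ii) with $a=1$ and $\kappa=1$ to reduce recurrence of $\chain{X^{\alpha(x)}}$ (respectively $\chain{X^{(\alpha(x),c)}}$) to recurrence of its subordinated strong Markov counterpart $\{X^{\alpha(x)}_{N^{1}_t}\}_{t\geq0}$ (respectively $\{X^{(\alpha(x),c)}_{N^{1}_t}\}_{t\geq0}$). By Proposition~\ref{p2.9} both subordinated processes are $\lambda$-irreducible $C_0$-Feller T-models satisfying the strong Feller property, so Proposition~\ref{p2.3}(iv) applies and the question reduces to the existence of a recurrent compact set.

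Second, I would use the scaling property of the S$\alpha$S family: if $Z$ has characteristic function $\exp(-\gamma|\xi|^{\alpha})$, then $c^{1/\alpha}Z$ has characteristic function $\exp(-c\gamma|\xi|^{\alpha})$. Equivalently, $f_{(\alpha(x),c\gamma(x))}(y)=c^{-1/\alpha(x)}f_{(\alpha(x),\gamma(x))}(c^{-1/\alpha(x)}y)$, so the jump of $\chain{X^{(\alpha(x),c)}}$ from state $x$ is distributed as $c^{1/\alpha(x)}$ times the jump of $\chain{X^{\alpha(x)}}$ from $x$. Since $\alpha(x)\in(\varepsilon,2)$, the dilation $c^{1/\alpha(x)}$ is bounded between two positive constants depending only on $c$ and $\varepsilon$, and the two densities share the tail exponent $\alpha(x)$, differing only by the bounded factor $c$ in the leading constant of the asymptotics $\sim c(x)|y|^{-\alpha(x)-1}$.

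Third, I would compare occupation times $U(x,C)$ on compact sets for the two subordinated processes. Using the L\'evy-kernel representation from Proposition~\ref{p2.11}, both semimartingales have absolutely continuous L\'evy kernels whose ratio is bounded above and below by positive constants uniformly on bounded state regions. A Foster--Lyapunov drift computation in the spirit of \cite{ja}, with a test function such as $V(x)=\ln(1+|x|)$, shows that the sign of the drift at infinity is governed only by the tail exponent $\alpha(x)$ and is insensitive to a bounded multiplicative perturbation in the leading constant. Consequently both processes satisfy the same side of the recurrence--transience dichotomy of Proposition~\ref{p2.3}(i), and combining this with Proposition~\ref{p2.7}(ii) yields the desired equivalence for the two chains.

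The main obstacle is the state-dependence of the dilation factor $c^{1/\alpha(x)}$: there is no global rescaling that transforms one chain into the other, so a literal path-wise coupling cannot be maintained past the first jump. Bridging this requires combining the uniform bounds on $c^{1/\alpha(x)}$ with the T-model and strong Feller structure supplied by Propositions~\ref{p2.7}--\ref{p2.9} in order to transfer compact-set occupation estimates from one chain to the other.
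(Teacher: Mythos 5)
Your first step (lifting to the Poisson-subordinated processes via Proposition~\ref{p2.7} and using the T-model/Feller structure from Proposition~\ref{p2.9}) matches the setup of the paper's proof. The gap is in your third step. You propose to decide which side of the recurrence--transience dichotomy each process falls on by a Foster--Lyapunov drift computation with $V(x)=\ln(1+|x|)$, claiming the sign of the drift at infinity is governed only by $\alpha(x)$ and is insensitive to a bounded multiplicative change of the scaling constant. But the drift criteria of \cite{ja} are only \emph{sufficient}: they give recurrence when $\liminf_{|x|\to\infty}\alpha(x)>1$ and transience when $\limsup_{|x|\to\infty}\alpha(x)<1$, and are silent in the critical regime. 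Proposition~\ref{p2.13} is invoked in the proof of Theorem~\ref{tm1.1} precisely to show that the chains $\{X^{(m)}_n\}_{n\geq0}$, whose scaling functions are $\gamma(x)/m$, are simultaneously recurrent or transient \emph{including} in the critical case $\alpha+\beta=2$ with $\alpha<1<\beta$. There no Lyapunov function of the proposed form settles the dichotomy (if it did, the delicate approximation arguments of Sections 3 and 4 would be unnecessary), so the conclusion ``both processes satisfy the same side of the dichotomy'' does not follow. Similarly, your intermediate claim that a two-sided bound on the ratio of the L\'evy kernels yields comparable occupation times $U(x,C)$ is unsubstantiated: the transition kernels $p(x,dy)=f_x(y-x)\,dy$ are not symmetric in $(x,y)$, so Dirichlet-form-type comparison theorems are not available, and a bounded ratio of jump kernels does not in general transfer to a bounded ratio of Green functions.

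For contrast, the paper avoids any drift computation. It shows that the map $c\mapsto\textbf{X}^{c}$ is continuous in distribution (Skorohod topology) by verifying locally uniform convergence of the modified semimartingale characteristics of Proposition~\ref{p2.11} and applying \cite[Theorem IX.4.8]{jacod}; since $R(O)$ is a continuity set for the limit law (\cite[Lemmas 2 and 3]{franke}), the function $c\mapsto\mathbb{P}^{x}_{\textbf{X}^{c}}(R(O))$ is continuous on $(0,\infty)$, while the zero--one law of Proposition~\ref{p2.4} forces it to take only the values $0$ and $1$; connectedness of $(0,\infty)$ then makes it constant. If you want to salvage your scaling observation $f_{(\alpha(x),c\gamma(x))}(y)=c^{-1/\alpha(x)}f_{(\alpha(x),\gamma(x))}(c^{-1/\alpha(x)}y)$, it would have to feed into such a continuity-plus-dichotomy argument rather than into a drift estimate.
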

\begin{proof}
Let
                                                   $\process{N^{1}}$
                                                   be the Poisson
                                                   process with
                                                   parameter
                                                   $1$
                                                   independent of
                                                   the stable-like
                                                   chain
                                                   $\chain{X^{(\alpha(x),c)}}$. Let us define the process $\textbf{X}^{c}=\process{X^{c}}$  by
                                                   $X^{c}_t:=X^{(\alpha(x),c)}_{N_t}.$
                                                   By Proposition \ref{p2.11}, the process
                                                   $\textbf{X}^{c}$
                                                   has
                                                   the modified
                                                   characteristics (relative to the truncation function $h(x)$) given by: \begin{align*}
B^{c}_t&=\int_0^{t}\int_{\R\setminus\{0\}}h(y)f_{(\alpha(X^{c}_{s}),c\gamma(X^{c}_{s}))}(y)dyds,\\
C^{c}_t&=0,\\
\tilde{C}^{c}_t&= \int_0^{t}\int_{\R\setminus\{0\}}h^{2}(y)f_{(\alpha(X^{c}_{s}),c\gamma(X^{c}_{s}))}(y)dyds\quad and \\
N^{c}(ds,dy)&= f_{(\alpha(X^{c}_{s}),c\gamma(X^{c}_{s}))}(y)dyds.
\end{align*}

Let $c_0>0$ be arbitrary and fixed and let us show that
$$\textbf{X}^{c}\stackrel{\hbox{\scriptsize d}}{\longrightarrow}
\textbf{X}^{c_0},\ \ \textrm{when}\ \ c\longrightarrow c_0,$$ where
$\stackrel{\hbox{\scriptsize d}}{\longrightarrow}$ denotes the
convergence in the space of c\`adl\`ag functions equipped with the
Skorohod topology.
 We only have to check assumptions 4.3, 4.7, 4.9, 4.10, 4.11 and 4.12
 from
\cite[Theorem IX.4.8]{jacod}. Assumptions 4.3, 4.7, 4.10 and 4.12
can be easily verified  by use of \cite[Theorem 3.3.5]{durrett},
continuity assumption of the functions $\alpha(x)$ and $\gamma(x)$,
the dominated convergence theorem and Propositions \ref{p2.9} and
\ref{p2.11}, while assumption 4.9 follows from Lemma \ref{l2.8}. To
verify assumption 4.11 we have to show that
$$\lim_{c\longrightarrow
c_0}\sup_{x\in[a,b]}\left|\int_{\R}g(y)\left(f_{(\alpha(x),c_0\gamma(x))}(y)-f_{(\alpha(x),c\gamma(x))}(y)\right)dy\right|=0$$
holds for all $g\in C_b(\R)$ and all $[a,b]\subseteq\R.$ If that
would not be the case, then there would exist $g\in C_b(\R)$,
$[a,b]\subseteq\R$, $\delta>0$ and sequences $\{c_n\}_{n\in\N}$ and
$\{x_n\}_{n\in\N}\subseteq[a,b]$ with limits $c_0$ and
$x_0\in[a,b],$ respectively, such that
\be\label{eq:2.8}\left|\int_{\R}g(y)\left(f_{(\alpha(x_n),c_0\gamma(x_n))}(y)-f_{(\alpha(x_n),c_n\gamma(x_n))}(y)\right)dy\right|>\delta\ee
holds for all $n\in\N$. Let $M\geq0$ be such that
$||g(x)||_{\infty}\leq M$ and let $R>0$ be arbitrary. We have
\begin{eqnarray*}&&\left|\int_{\R}g(y)\left(f_{(\alpha(x_n),c_0\gamma(x_n))}(y)-f_{(\alpha(x_n),c_n\gamma(x_n))}(y)\right)dy\right|\nonumber\\
&&\begin{split}\leq&\left|\int_{-R}^{R}g(y)\left(f_{(\alpha(x_n),c_0\gamma(x_n))}(y)-f_{(\alpha(x_n),c_n\gamma(x_n))}(y)\right)dy\right|\\&+\left|\int_{|y|\geq
R}g(y)\left(f_{(\alpha(x_n),c_0\gamma(x_n))}(y)-f_{(\alpha(x_n),c_n\gamma(x_n))}(y)\right)dy\right|.\end{split}\end{eqnarray*}
From continuity of the functions $\alpha(x)$ and $\gamma(x)$ and
from \cite[Corollary 1.2.4]{ushakov}, we have
\begin{align*}\lim_{n\longrightarrow\infty}\left|\int_{-R}^{R}g(y)\left(f_{(\alpha(x_n),c_0\gamma(x_n))}(y)-f_{(\alpha(x_n),c_n\gamma(x_n))}(y)\right)dy\right|=0.\end{align*}
Furthermore, by Lemma \ref{l2.8} we have
\begin{align*}&\lim_{R\longrightarrow\infty}\sup_{n\in\N}\left|\int_{|y|\geq
R}g(y)\left(f_{(\alpha(x_n),c_0\gamma(x_n))}(y)-f_{(\alpha(x_n),c_n\gamma(x_n))}(y)\right)dy\right|\nonumber\\
&\leq M\lim_{R\longrightarrow\infty}\sup_{n\in\N}\int_{|y|\geq
R}f_{(\alpha(x_n),c_0\gamma(x_n))}(y)+M\lim_{R\longrightarrow\infty}\sup_{n\in\N}\int_{|y|\geq
R}f_{(\alpha(x_n),c_n\gamma(x_n))}(y)=0.\end{align*} Hence,
$$\lim_{n\longrightarrow\infty}\left|\int_{\R}g(y)\left(f_{(\alpha(x_n),c_0\gamma(x_n))}(y)-f_{(\alpha(x_n),c_n\gamma(x_n))}(y)\right)dy\right|=0,$$
what is in contradiction with (\ref{eq:2.8}). The locally uniform
convergence of the other two characteristics easily follows from
\cite[Theorem 3.3.5]{durrett}, the continuity assumption of the
functions $\alpha(x)$ and $\gamma(x)$ and the dominated convergence
theorem.

Let $x\in\R$ be arbitrary and let $O\subseteq\R$ be an arbitrary
open bounded set. Since the  process
             $\textbf{X}^{c_0}$ satisfies the $C_b$-Feller property, by
             \cite[Lemmas 2 and 3]{franke}, we have $\mathbb{P}^{x}_{\textbf{X}^{c_0}}(\partial R(O))=0.$  Here $\partial A$ denotes the boundary of the set $A$.  Therefore, by   \cite[Theorem 2.1]{bil}, we have
             \begin{align*}\lim_{c\longrightarrow c_0}\mathbb{P}^{x}_{\textbf{X}^{c}}(R(O))=\mathbb{P}^{x}_{\textbf{X}^{c_0}}(R(O))\end{align*}
             for all $x\in\R$ and for all open bounded  sets
             $O\subseteq\R$. Hence, for all $x\in\R$ and all
             open bounded sets $O\subseteq\R$, the function $$c\longmapsto
             \mathbb{P}^{x}_{\textbf{X}^{c}}(R(O))$$ is a continuous function on $(0,\infty)$.
             Note that  (\ref{eq:2.1}) is satisfied if for the distribution
             $a_C(\cdot)$ we
             take $a_C(\cdot):=\delta_{t_0}(\cdot)$, where $t_0>0$ is
             arbitrary.
             Since the processes $\textbf{X}^{c}$ are $\lambda$-irreducible T-models, by   Proposition \ref{p2.4}, $\mathbb{P}^{x}_{\textbf{X}^{c}}(R(O))=1$
            for all
             $c\in(0,\infty)$, all $x\in\R$ and all open bounded
             sets
             $O\subseteq\R$, or $\mathbb{P}^{x}_{\textbf{X}^{c}}(R(O))=0$
            for all
             $c\in(0,\infty)$, all $x\in\R$ and all open bounded
             sets
             $O\subseteq\R$. This means, again by  Proposition \ref{p2.4},
             that all processes $\textbf{Y}^{c}$, $c\in(0,\infty)$, are either recurrent
             or transient at the same time. Now,
             by Proposition \ref{p2.7},  the desired result follows.
\end{proof}

\section{Proof of  Theorem 1.1}
In this section we give a proof of  Theorem \ref{tm1.1}. Let the
function $p : \R\times \R\longrightarrow \CC$ be given by
$p(x,\xi)=\gamma(x)|\xi|^{\alpha(x)}$, for some functions
$\alpha:\R\longrightarrow(0,2)$ and
$\gamma:\R\longrightarrow(0,\infty)$. In \cite{bass} it is shown
that if the functions $\alpha(x)$ and $\gamma(x)$
satisfy:\begin{enumerate}
                                                  \item [(i)] $0<\inf_{x\in\R}\alpha(x)\leq\sup_{x\in\R}\alpha(x)<2$
                                                  and
                                                  $0<\inf_{x\in\R}\gamma(x)\leq\sup_{x\in\R}\gamma(x)<\infty$,
                                                  \item [(ii)]
                                                  $\beta(z)=o(1/|\ln(z)|),$
                                                  when
                                                  $z\longrightarrow0$,
                                                  where
                                                  $\beta(z):=\sup_{|x-y|\leq
                                                  z}|\alpha(x)-\alpha(y)|$,
                                                  \item [(iii)]
                                                  $\int_{0}^{1}\frac{\beta(z)}{z}dz<\infty,$
                                                  i.e., the function
                                                  $\alpha(x)$ is
                                                  Dini continuous and
                                                  \item [(iv)] $\gamma\in C(\R)$,
                                                \end{enumerate}
then the function (symbol) $p(x,\xi)=\gamma(x)|\xi|^{\alpha(x)}$
defines  $C_b$-Feller process on $(\R,\mathcal{B}(\R))$ called a
\emph{stable-like process}.
 Note that if the function $\alpha(x)$ is
Lipschitz continuous, i.e., if there exists $L>0$, such that
$|\alpha(x)-\alpha(y)|\leq L|x-y|$ holds for all $x,y\in\R$, then it
is also Dini continuous and  condition (ii) is satisfied. Write
$C^{1}_b(\R)$ for the set of bounded continuously differentiable
functions with bounded derivative. Clearly, $\alpha\in C_b^{1}(\R)$
implies
 Lipschitz continuity of $\alpha(x)$. Furthermore, by \cite[Theorems
 1.1 and 3.3]{rene-wang-feller}, $\alpha,\gamma\in C_b^{1}(\R)$ imply that the corresponding stable-like process is a $C_0$-Feller process and it satisfies the
 $C_b$-Feller property and the strong Feller property.

\begin{proof}[Proof of Theorem \ref{tm1.1}]
Let $k>0$ be arbitrary and let
$\textbf{X}^{(\alpha,\beta)}=\process{X^{(\alpha,\beta)}}$ be the
stable-like process on $\R$ which corresponds to the symbol
$p(x,\xi)=\gamma(x)|\xi|^{\alpha(x)}$, where the functions
$\alpha,\gamma \in C_b^{1}(\R)$ are such that
\begin{align*}\alpha(x)=\left\{\begin{array}{cc}
                                                      \alpha, & x<-k \\
                                                      \beta, & x>k
                                                    \end{array}\right.   \quad \textrm{and}\quad  \gamma(x)=\left\{\begin{array}{cc}
                                                      \gamma, & x<-k \\
                                                      \delta,&
                                                      x>k.
                                                    \end{array}\right.\end{align*}
By \cite[Theorem 5.1]{kolokoltsov}, transition kernel
$\mathbb{P}^{x}(X^{(\alpha,\beta)}_t\in dy)$ is absolutely
continuous with respect to the Lebesgue measure, and by
\cite[Theorem 3.3]{rene-wang-feller},
$\mathbb{P}^{x}(X^{(\alpha,\beta)}_t\in B)>0$ holds for all
$x\in\R$, all $t\in\R_+$  and all $B\in\mathcal{B}(\R)$ with
$\lambda(B)>0$. Therefore, the stable-like
                                                    process
                                                    $\textbf{X}^{(\alpha,\beta)}$ is
                                                    $\lambda-$irreducible
                                                    and, by
                                                    \cite[Theorem 7.1]{tweedie}, it
                                                    is a
                                                    T-model.
                                                    Hence, from
                                                      \cite[Theorem 4.2]{bjoern},
                                                    H-recurrence and
                                                    recurrence
                                                    properties of
                                                    the stable-like
                                                     process $\textbf{X}^{(\alpha,\beta)}$
                                                    are equivalent.
                                                    Furthermore,
                                                     from
                                                    \cite[Corollary 5.5]{bjoern}, the stable-like
                                                    process $\textbf{X}^{(\alpha,\beta)}$
                                                    is recurrent if
                                                    and only if
                                                    $\alpha+\beta\geq2$.

                                                    By \cite{bjoernschilling},  the stable-like process
                                                    $\textbf{X}^{(\alpha,\beta)}$ can be approximated by a
                                                    sequence of
                                                    Markov chains,
                                                    i.e., for a sequence of Markov chains
                                                    $\{X^{(m)}_n\}_{n\geq0}$,
                                                    $m\in\N$,
on $(\R,\mathcal{B}(\R))$ given by a sequence of transition kernels
$p_m(x,dy)$, $m\in\N$, such that
$$\int_{\R}e^{i\xi y}p_m(x,dy)=e^{i\xi x-\frac{1}{m}\eta(x,\xi)}=e^{i\xi x-\frac{\gamma(x)}{m}|\xi|^{\alpha(x)}},$$
we have that
$$\textbf{X}^{(m)}\stackrel{\hbox{\scriptsize
d}}{\longrightarrow} \textbf{X}^{(\alpha,\beta)},\ \ \textrm{as}\ \
m\longrightarrow\infty,$$ where $\textbf{X}^{(m)}=\{X^{(m)}_{\lfloor
mt\rfloor}\}_{t\geq0}$. Again, $\stackrel{\hbox{\scriptsize
d}}{\longrightarrow}$ denotes  convergence in distribution in the
space of c\`adl\`ag functions equipped with the Skorohod topology.
By Proposition \ref{p2.13}, the chains $\{X^{(m)}_n\}_{n\geq0}$,
                                                    $m\in\N$, are
                                                    either recurrent
                                                    or transient at
                                                    the same time.
                                                    Hence, the rest of
                                                     proof is
                                                    devoted to prove
                                                    that this
                                                    dichotomy is
                                                    equivalent with
                                                    the recurrence-transience dichotomy of
                                                    the stable like
                                                    process $\textbf{X}^{(\alpha,\beta)}$.

Since the stable-like process
             $\textbf{X}^{(\alpha,\beta)}$ is a $C_b$-Feller process, by
             \cite[Lemmas 2 and 3]{franke} we have $\mathbb{P}^{x}_{\textbf{X}^{(\alpha,\beta)}}(\partial R(O))=0$ for
             all
             $x\in\R$ and all
             open bounded  sets $O\subseteq\R$.  Therefore, by  \cite[Theorem 2.1]{bil}, we have
             \begin{align}\label{eq:3.1}\lim_{m\longrightarrow\infty}\mathbb{P}^{x}_{\textbf{X}^{(m)}}(R(O))=\mathbb{P}^{x}_{\textbf{X}^{(\alpha,\beta)}}(R(O))\end{align}
             for all $x\in\R$ and for all open bounded  sets $O\subseteq\R$.

Let us assume that $\alpha+\beta\geq2$. Hence, the stable-like
process $\textbf{X}^{(\alpha,\beta)}$ is recurrent. Note that
assumption (\ref{eq:2.1}) follows if for  the distribution
$a_C(\cdot)$ we take $a_C(\cdot):=\delta_{t_0}(\cdot),$ where
$t_0>0$ is arbitrary, and apply the strong Feller property. Hence,
by the 0-1 property (Proposition \ref{p2.4})
$\mathbb{P}^{x}_{\textbf{X}^{(\alpha,\beta)}}(R(O))=1$ holds for all
$x\in\R$ and all open bounded sets $O\subseteq\R$.  From
(\ref{eq:3.1}), for any starting point $x\in\R$ and any open bounded
set $O\subseteq\R$ there exists $m_0\geq1$ such that
$\mathbb{P}^{x}_{\textbf{X}^{(m_0)}}(R(O))>0$, i.e.,
$\mathbb{P}^{x}\left(\sum_{n=0}^{\infty}1_{\{X^{(m_0)}_n\in
O\}}=\infty\right)>0.$ But, since the stable-like chain
$\{X^{(m_0)}_n\}_{n\geq0}$ is $\lambda-$irreducible T-model, by 0-1
property,
$$\mathbb{P}^{x}\left(\sum_{n=0}^{\infty}1_{\{X^{(m_0)}_n\in
O\}}=\infty\right)=1$$ holds for all $x\in\R$ and all open bounded
sets $O\subseteq\R$, i.e., the stable like chain
$\{X^{(m_0)}_n\}_{n\geq0}$ is recurrent. Now, by applying
Proposition \ref{p2.13},  all  stable-like chains
$\{X^{(m)}_n\}_{n\geq0}$,
                                                    $m\in\N$, are
                                                    recurrent. Therefore, since
$\{\bar{X}^{(\alpha,\beta)}_n\}_{n\geq0}\stackrel{\hbox{\scriptsize
d}}{=}\{X^{(1)}_n\}_{n\geq0}$ (recall that the stable-like chain
$\{\bar{X}^{(\alpha,\beta)}_n\}_{n\geq0}$ is defined in Proposition
\ref{p2.6}), by Propositions \ref{p2.6}  the stable-like chain
$\chain{X^{(\alpha,\beta)}}$ is recurrent.

Let us now show that the recurrence property  of the stable-like
chain $\chain{X^{(\alpha,\beta)}}$ implies $\alpha+\beta\geq2.$ Let
us assume that this is not the case, i.e., let us assume that
$\alpha+\beta<2$. Hence, the stable-like process
$\textbf{X}^{(\alpha,\beta)}$ is transient, i.e.,
$\mathbb{P}^{x}_{\textbf{X}^{(\alpha,\beta)}}(T(O))=1$ holds for all
$x\in\R$ and all
             open bounded sets $O\subseteq\R$. Now, by (\ref{eq:3.1}), we have
$$\lim_{m\longrightarrow\infty}\mathbb{P}^{x}_{\textbf{X}^{(m)}}(T(O))=
\mathbb{P}^{x}_{\textbf{X}^{(\alpha,\beta)}}(T(O))=1.$$
 Hence, for any starting point $x\in\R$ and any open bounded set $O\subseteq\R$, there exists $m_0\geq1$ such that
$\mathbb{P}^{x}_{\textbf{X}^{(m_0)}}(T(O))>0$. Therefore,
$\mathbb{P}^{x}\left(\sum_{n=0}^{\infty}1_{\{X^{(m_0)}_n\in
O\}}=\infty\right)<1.$ Again, by the 0-1 property, we have
$$\mathbb{P}^{x}\left(\sum_{n=0}^{\infty}1_{\{X^{(m_0)}_n\in
O\}}=\infty\right)=0$$ for all $x\in\R$ and all open bounded sets
$O\subseteq\R$. Hence, the stable like chain
$\{X^{(m_0)}\}_{n\geq0}$ is transient. Therefore, by Proposition
\ref{p2.13},  all the stable-like chains $\{X^{(m)}_n\}_{n\geq0}$,
                                                    $m\in\N$, are
                                                    transient.
Since
$\{\bar{X}^{(\alpha,\beta)}_n\}_{n\geq0}\stackrel{\hbox{\scriptsize
d}}{=}\{X^{(1)}_n\}_{n\geq0}$, by  Proposition \ref{p2.6}, the
stable-like chain $\chain{X^{(\alpha,\beta)}}$ is also transient.
But this is in contradiction with recurrence assumption of the
stable-like chain $\chain{X^{(\alpha,\beta)}}$. Hence, we have
proved the desired result.
\end{proof}

\section{Proof of  Theorem 1.2}
In this section we give a proof of  Theorem \ref{tm1.2}. Recall that
the functions $x\longmapsto f_x$, $\alpha(x)$ and $c(x)$ are
$\tau$-periodic, the function $(x,y)\longmapsto f_x(y)$ is
continuous and strictly positive and $\alpha(x)$ and $c(x)$ are
Borel measurable. Let us put $\Lambda:=\tau\ZZ$ and let
$\Pi_{\Lambda} : \R\longrightarrow \R/\Lambda$ be the covering map.
We denote by $\chain{X^{\Lambda p}}$ the process on $\R/\Lambda$
obtained by projection of the stable-like chain $\chain{X^{p}}$ (the
chain given by (\ref{eq:1.4})) with respect to $\Pi_{\Lambda}(x)$.
By \cite[Proposition 3.8.8]{vasili-book}, the process
$\chain{X^{\Lambda p}}$ is a
 Markov chain on $\R/\Lambda$ with transition density
function
$$p^{\Lambda}(x,y)=\sum_{k\in\Lambda}p(z_x,z_y+k)=\sum_{k\in\Lambda}f_{z_x}(z_y-z_x+k)$$ for all $x,y\in\R/\Lambda$, where    $z_x$ and $z_y$ are arbitrary points  in
$\Pi^{-1}_{\Lambda}(\{x\})$ and $\Pi^{-1}_{\Lambda}(\{y\})$,
respectively. Furthermore, by \cite[Theorem
III.3.1]{benso-lions-book}, the chain $\chain{X^{\Lambda p}}$
possesses an invariant measure $\pi(\cdot)$, with
$\pi(\R/\Lambda)<\infty$, and there exist constants $C>0$ and $c>0$,
such that for all $\tau$-periodic functions $f\in B_b(\R)$  we have
$$\int_{\R/\Lambda}f(z_x)\pi(dx)=0\quad\Longrightarrow\quad
\left|\left|\int_{\R}p_n(\cdot,dy)f(y)\right|\right|_{\infty}\leq
C||f||_{\infty}e^{-cn}\quad\textrm{for all}\quad n\in\N.$$ Since
$\pi(\R/\Lambda)<\infty$, without loss of generality, we assume that
$\pi(\R/\Lambda)=1.$ Following the ideas from the proof of
\cite[Theorem 1]{franke}, we give the proof of Theorem \ref{tm1.2}.

\begin{proof}[Proof of  Theorem \ref{tm1.2}]
 Let $\chain{X^{\Lambda p}}$ be as above.  Let us suppose that the set
$\{x\in\R:\alpha(x)=\alpha_0:=\inf_{x\in\R}\alpha(x)\}$ has positive
Lebesgue measure. By $\lambda$-irreducibility of the stable-like
chain $\chain{X^{p}}$, this is equivalent with
$\pi(\Pi_{\Lambda}(\{x\in\R:\alpha(x)=\alpha_0:=\inf_{x\in\R}\alpha(x)\}))>0$.
Indeed, since $\pi(\cdot)$ is the invariant measure of the chain
$\chain{X^{\Lambda p}}$,
$$\int_{\R/\Lambda}p^{\Lambda}(x,B)\pi(dx)=\pi(B)$$ holds for all
$B\in\mathcal{B}(\R/\Lambda)$, where $\mathcal{B}(\R/\Lambda)$
denotes the Borel $\sigma$-algebra with respect to the quotient
topology. Let us put
$A:=\{x\in\R:\alpha(x)=\alpha_0:=\inf_{x\in\R}\alpha(x)\}$ and
$B:=\Pi_{\Lambda}(A).$ We have
\begin{align*}\pi(B)&=\int_{\R/\Lambda}p^{\Lambda}(x,B)\pi(dx)=\int_{\R/\Lambda}p(z_x,\Pi_{\Lambda}^{-1}(B))\pi(dx)=\int_{\R/\Lambda}p(z_x,A)\pi(dx).\end{align*}
Now, if $\lambda(A)>0$, then $p(z_x, A)>0$ for all $z_{x}\in\R$.
Therefore, $\pi(B)>0$ as well. On the other hand, if $\lambda(A)=0$,
then $p(z_x,A)=0$ for all $z_{x}\in \R$. Hence, $\pi(B)=0$.

In the sequel (because of $\tau$-periodicity) we use the
abbreviation $\alpha(x)$ and $c(x)$, for $\alpha\left(z_x\right)$
and $c\left(z_x\right)$, where $x\in\R/\Lambda$ and
$z_x\in\Pi^{-1}_{\Lambda}(\{x\})$ are arbitrary.

Let $\process{N^{1}}$ be the Poisson process with parameter $1$
independent of the periodic stable-like chain $\chain{X^{p}}$ and
let us define a $\lambda$-irreducible  Markov process
$\textbf{Y}^{p}:=\{X^{p}_{N_t}\}_{t\geq0}.$ By Proposition
\ref{p2.7}, the semigroup of the process $\textbf{Y}^{p}$ is given
by
$$P_tf(x)=e^{-t}\sum_{n=0}^{\infty}\frac{t^{n}}{n!}\int_{\R}p^{n}(x,dy)f(y)$$
for $f\in B_b(\R)$ and $t\in\R_+$. Hence, for every $\tau$-periodic
function $f\in B_b(\R)$ we have
\be\label{eq:4.1}\left|\left|P_tf\right|\right|_{\infty}\leq
C||f||_{\infty}e^{-t}\sum_{n=0}^{\infty}\frac{t^{n}}{n!}e^{-cn}=C||f||_{\infty}e^{-t(1-e^{-c})}.\ee

Let us define the sequence of   semimartingales
$\textbf{Y}^{p}_{n}:=\{n^{-\frac{1}{\alpha_0}}X^{p}_{N_{nt}}\}$,
$n\in\N$. Now, we prove  that the sequence of processes
$\textbf{Y}^{p}_{n}$, $n\in\N$, converges in distribution to a
symmetric $\alpha_0$-stable L\'{e}vy process
$\textbf{L}=\process{L}$ with
 the modified characteristics (relative to
the truncation function $h(x)$)
\begin{align*}
B^{0}_t&=\Theta t\int_{\R}(h(y)-y1_{|y|\leq1})\frac{dy}{|y|^{\alpha_0+1}},\\
\tilde{C}^{0}_t&=\Theta t\int_{\R}h^{2}(y)\frac{dy}{|y|^{\alpha_0+1}} \quad \textrm{and} \\
N^{0}(ds,dy)&=\Theta\frac{dyds}{|y|^{\alpha_0+1}},
\end{align*} where $\Theta:=\int_{\R/\Lambda}1_{\{\alpha(x)=\alpha_0\}}c(x)\pi(dx)$ (see \cite[Theorem 3.5]{schilling}). Without loss of generality, we take all the processes $\textbf{Y}^{p}_{n}$,
$n\in\N$, and $\textbf{L}$ to be defined on the same probability
spaces $(\Omega,\mathcal{F},\{\mathbb{P}^{x}\}_{x\in\R})$. In order
to prove this convergence, by \cite[Theorem VIII.2.17]{jacod} it
suffices to show that initial distributions of $\textbf{Y}^{p}_{n}$
converge to initial distribution  of $\textbf{L}$ (what is trivially
satisfied) and the modified characteristics
$(B^{n},\widetilde{C}^{n},N^{n})$ of the processes
$\textbf{Y}^{p}_{n}$, $n\in\N$, converge in probability to the
modified characteristics $(B^{0},\tilde{C}^{0},N^{0})$, when
$n\longrightarrow\infty$. By Proposition \ref{p2.11}, the modified
characteristics $(B^{n},\tilde{C}^{n},N^{n})$ of the process
$\textbf{Y}^{p}_{n}$ are given by
\begin{align*}
B^{n}_t&=n^{1+\frac{1}{\alpha_0}}\int_0^{t}\int_{\R}h(y)f_{Y^{p}_{ns}}\left(n^{\frac{1}{\alpha_0}}y\right)dyds,\\
\tilde{C}^{n}_t&= n^{1+\frac{1}{\alpha_0}}\int_0^{t}\int_{\R}h^{2}(y)f_{Y^{p}_{ns}}\left(n^{\frac{1}{\alpha_0}}y\right)dyds\quad and \\
N^{n}(ds,dy)&=n^{1+\frac{1}{\alpha_0}}
f_{X^{p}_{ns}}\left(n^{\frac{1}{\alpha_0}}y\right)dyds.
\end{align*}Note  that (PC4), (\ref{eq:1.5}) and
$\lambda(\{x\in\R:\alpha(x)=\alpha_0:=\inf_{x\in\R}\alpha(x)\})>0$,
i.e.,
$\pi(\Pi_{\Lambda}(\{x\in\R:\alpha(x)=\alpha_0:=\inf_{x\in\R}\alpha(x)\}))>0$,
imply $0<\Theta<\infty,$ therefore the above $\alpha_0$-stable
L\'{e}vy process
 characteristics are well defined.

 Recall that
for a Borel measurable function $g:\R\longrightarrow\R$ and a random measure $\mu(\omega,ds,dx)$ on $\mathcal{B}(\R_+)\times\mathcal{B}(\R)$, the $\ast$-product is defined by $$ g\ast\mu_t(\omega):=\left\{\begin{array}{cc} \int_{[0,t]\times\R}g(x)\mu(\omega,ds,dx), & \int_{[0,t]\times\R}|g(x)|\mu(\omega,ds,dx)<\infty \\
                                                     \infty,
                                                      & \textrm{otherwise},
                                                    \end{array}\right.$$
 (see \cite[Definition II.1.3]{jacod} for details).  Let
$g\in C_b(\R)$ vanish in a neighborhood of the origin. We have
\begin{align}g\ast N^{n}_t=&\int_0^{t}\int_{\R}g(y)N^{n}(ds,dy)\nonumber\\
=&\int_0^{t}\int_{\R}g(y)n^{1+\frac{1}{\alpha_0}}
f_{Y^{p}_{ns}}\left(n^{\frac{1}{\alpha_0}}y\right)dyds\nonumber\\
=&\int_0^{t}\int_{\R}g\left(n^{\frac{1}{\alpha(Y^{p}_{ns})}-\frac{1}{\alpha_0}}y\right)n^{1+\frac{1}{\alpha(Y^{p}_{ns})}}
f_{Y^{p}_{ns}}\left(n^{\frac{1}{\alpha(Y^{p}_{ns})}}y\right)dyds\nonumber\\
=&\int_0^{t}\int_{\R}1_{\{\alpha(X^{p}_{ns})=\alpha_0\}}g\left(n^{\frac{1}{\alpha(Y^{p}_{ns})}-\frac{1}{\alpha_0}}y\right)n^{1+\frac{1}{\alpha(Y^{p}_{ns})}}
f_{Y^{p}_{ns}}\left(n^{\frac{1}{\alpha(Y^{p}_{ns})}}y\right)dyds\nonumber\\
&+\int_0^{t}\int_{\R}1_{\{\alpha(Y^{p}_{ns})>\alpha_0\}}g\left(n^{\frac{1}{\alpha(Y^{p}_{ns})}-\frac{1}{\alpha_0}}y\right)n^{1+\frac{1}{\alpha(Y^{p}_{ns})}}
f_{Y^{p}_{ns}}\left(n^{\frac{1}{\alpha(Y^{p}_{ns})}}y\right)dyds\nonumber\\
=\label{eq:4.2}&\int_0^{t}\int_{\R}1_{\{\alpha(Y^{p}_{ns})=\alpha_0\}}g\left(y\right)\frac{c(Y^{p}_{ns})}{|y|^{\alpha_0+1}}dyds\\
&\label{eq:4.3}+\int_0^{t}\int_{\R}1_{\{\alpha(Y^{p}_{ns})=\alpha_0\}}g\left(y\right)\left(n^{1+\frac{1}{\alpha_0}}
f_{Y^{p}_{ns}}\left(n^{\frac{1}{\alpha_0}}y\right)-\frac{c(Y^{p}_{ns})}{|y|^{\alpha_0+1}}\right)dyds\\
&\label{eq:4.4}+\int_0^{t}\int_{\R}1_{\{\alpha(Y^{p}_{ns})>\alpha_0\}}g\left(y\right)n^{1-\frac{\alpha(Y^{p}_{ns})}{\alpha_0}}\frac{c(Y^{p}_{ns})}{|y|^{\alpha(Y^{p}_{ns})+1}}dyds\\
&\label{eq:4.5}+\int_0^{t}\int_{\R}1_{\{\alpha(Y^{p}_{ns})>\alpha_0\}}g\left(y\right)\left(n^{1+\frac{1}{\alpha_0}}
f_{Y^{p}_{ns}}\left(n^{\frac{1}{\alpha_0}}y\right)-n^{1-\frac{\alpha(Y^{p}_{ns})}{\alpha_0}}\frac{c(Y^{p}_{ns})}{|y|^{\alpha(Y^{p}_{ns})+1}}\right)dyds.
\end{align}
Let $0<\varepsilon<1$ be arbitrary. Then, by (PC3), there exists
$y_{\varepsilon}\geq1$, such that
\be\label{eq:4.6}(1-\varepsilon)\frac{c(x)}{|y|^{\alpha(x)+1}}<f_x(y)<(1+\varepsilon)\frac{c(x)}{|y|^{\alpha(x)+1}}\ee
holds for all $|y|\geq y_{\varepsilon}$ and all $x\in\R$. Since the
function $g(x)$ vanishes in a neighborhood of the origin, by
(\ref{eq:4.6}) and the dominated convergence theorem, (\ref{eq:4.3})
and (\ref{eq:4.5}) converge to $0$, $\mathbb{P}^{x}$-a.s., when
$n\longrightarrow\infty.$ Let us  prove that (\ref{eq:4.4})
converges in $L^{2}(\Omega,\mathbb{P}^{x})$ to $0$,
 when $n\longrightarrow\infty$. We define
$$U_n(z):=\int_{\R}g(y)\left(1_{\{\alpha(z)>\alpha_0\}}n^{1-\frac{\alpha(z)}{\alpha_0}}\frac{c(z)}{|y|^{\alpha(z)+1}}-\int_{\R/\Lambda}1_{\{\alpha(x)>\alpha_0\}}n^{1-\frac{\alpha(x)}{\alpha_0}}\frac{c(x)}{|y|^{\alpha(x)+1}}\pi(dx)\right)dy.$$
By $\tau$-periodicity of functions $\alpha(x)$ and $c(x)$, the
function $U_n(z)$ is $\tau$-periodic and
$$\int_{\R/\Lambda}U_n(z)\pi(dz)=0.$$
Using integration by parts formula, Markov property and
(\ref{eq:4.1}), we have
\begin{align*}\mathbb{E}^{x}\left[\left(\int_0^{t}U_n(Y^{p}_{ns})\right)^{2}\right]&=2\int_0^{t}\int_0^{s}\mathbb{E}^{x}[U_n(Y^{p}_{ns})U_n(Y^{p}_{nr})]drds\nonumber\\
&=2\int_0^{t}\int_0^{s}\mathbb{E}^{x}[\mathbb{E}^{x}[U_n(Y^{p}_{ns})|\mathcal{F}_{nr}]U_n(Y^{p}_{nr})]drds\nonumber\\&=2\int_0^{t}\int_0^{s}\mathbb{E}^{x}[P_{n(s-r)}U_n(Y^{p}_{nr})U_n(Y^{p}_{nr})]drds\end{align*}
\begin{align}\label{eq:4.7}=2\int_0^{t}\int_0^{s}Ce^{-n(1-e^{-c})(s-r)}||U_n||^{2}_\infty=\frac{2C||U_n||^{2}_\infty}{n(1-e^{-c})}\int_0^{t}(1-e^{-n(1-e^{-c})s})ds\leq\frac{C||U_n||^{2}_{\infty}}{
n(1-e^{-c})}.\end{align}  Note that, by (\ref{eq:1.5}),
$$|U_n(z)|\leq\sup_{z\in\R}\left(\int_{\R}|g(y)|\frac{c(z)}{|y|^{\alpha(z)+1}}dy+\int_{\R}|g(y)|\int_{\R/\Lambda}\frac{c(x)}{|y|^{\alpha(x)+1}}\pi(dx)dy\right)<\infty,$$ i.e., $||U_n||_{\infty}$ remains bounded as $n$ grows. Hence
$$\lim_{n\longrightarrow\infty}\mathbb{E}^{x}\left[\left(\int_0^{t}U_n(Y^{p}_{ns})\right)^{2}\right]=0.$$
Furthermore,
\begin{align}\label{eq:4.8}&\left(\mathbb{E}^{x}\left[\left(\int_0^{t}\int_{\R}1_{\{\alpha(Y^{p}_{ns})>\alpha_0\}}g\left(y\right)n^{1-\frac{\alpha(Y^{p}_{ns})}{\alpha_0}}\frac{c(Y^{p}_{ns})}{|y|^{\alpha(Y^{p}_{ns})+1}}dyds\right)^{2}\right]\right)^{\frac{1}{2}}\nonumber\\
&\leq
\left(\mathbb{E}^{x}\left[\left(\int_0^{t}U_n(Y^{p}_{ns})\right)^{2}\right]\right)^{\frac{1}{2}}\nonumber\\
&  \quad
+\left(\mathbb{E}^{x}\left[\left(\int_0^{t}\int_{\R}\int_{\R/\Lambda}1_{\{\alpha(x)>\alpha_0\}}n^{1-\frac{\alpha(x)}{\alpha_0}}g(y)\frac{c(x)}{|y|^{\alpha(x)+1}}\pi(dx)dy\right)^{2}\right]\right)^{\frac{1}{2}}.\end{align}
By the dominated convergence theorem, (\ref{eq:4.8}) converges to
zero, when $n\longrightarrow\infty$, i.e.,  (\ref{eq:4.4}) converges
in $L^{2}(\Omega,\mathbb{P}^{x})$ to $0$, when
$n\longrightarrow\infty.$ Now, let us  prove that (\ref{eq:4.2})
converges in $L^{2}(\Omega,\mathbb{P}^{x})$ to
$$g\ast N_t^{0}=t\int_{\R}\int_{\R/\Lambda}1_{\{\alpha(x)=\alpha_0\}}g(y)\frac{c(x)}{|y|^{\alpha_0+1}}\pi(dx)dy,$$
when $n\longrightarrow\infty.$ We define
$$U(z):=\int_{\R}g(y)\left(1_{\{\alpha(z)=\alpha_0\}}\frac{c(z)}{|y|^{\alpha_0+1}}-\int_{\R/\Lambda}1_{\{\alpha(x)=\alpha_0\}}\frac{c(x)}{|y|^{\alpha_0+1}}\pi(dx)\right)dy.$$
By $\tau$-periodicity of functions $\alpha(x)$ and $c(x)$, the
function $U(z)$ is $\tau$-periodic and
$$\int_{\R/\Lambda}U(z)\pi(dz)=0.$$ Hence, in the same way as for  (\ref{eq:4.4}), it can be shown that  $g\ast N^{n}_t$ converges in probability to
$g\ast N_t^{0}$. In  the same way one can prove that $B_t^{n}$
converges in probability to $B_t^{0},$ when
$n\longrightarrow\infty.$

At the end, let us show that $\widetilde{C}_t^{n}$ converges in
probability to $\tilde{C}_t^{0}$, when $n\longrightarrow\infty$.
Recall that the truncation function $h(x)$ is a bounded Borel
measurable function satisfying $h(x)=x$ in a neighborhood of the
origin. Let $\delta>0$ be small enough and such that $h(x)=x$ for
all $x\in(-\delta,\delta).$ We have
\begin{align}\widetilde{C}_t^{n}=&\int_0^{t}\int_{\R}h^{2}(y)n^{1+\frac{1}{\alpha_0}}
f_{Y^{p}_{ns}}\left(n^{\frac{1}{\alpha_0}}y\right)dyds\nonumber\\
=&\int_0^{t}\int_{\R}1_{\{\alpha(Y^{p}_{ns})=\alpha_0\}}h^{2}\left(y\right)n^{1+\frac{1}{\alpha_0}}
f_{Y^{p}_{ns}}\left(n^{\frac{1}{\alpha_0}}y\right)dyds\nonumber\\
&+\int_0^{t}\int_{\R}1_{\{\alpha(Y^{p}_{ns})>\alpha_0\}}h^{2}\left(y\right)n^{1+\frac{1}{\alpha_0}}
f_{Y^{p}_{ns}}\left(n^{\frac{1}{\alpha_0}}y\right)dyds\nonumber\\
\label{eq:4.9}=&\int_0^{t}\int_{\R}1_{\{\alpha(Y^{p}_{ns})=\alpha_0\}}h^{2}\left(y\right)\frac{c(Y^{p}_{ns})}{|y|^{\alpha_0+1}}dyds\\
\label{eq:4.10}&+\int_0^{t}\int_{(-\delta,\delta)^{c}}1_{\{\alpha(Y^{p}_{ns})=\alpha_0\}}h^{2}\left(y\right)\left(n^{1+\frac{1}{\alpha_0}}
f_{Y^{p}_{ns}}\left(n^{\frac{1}{\alpha_0}}y\right)-\frac{c(Y^{p}_{ns})}{|y|^{\alpha_0+1}}\right)dyds\\
\label{eq:4.11}&+\int_0^{t}\int_{(-\delta,\delta)}1_{\{\alpha(Y^{p}_{ns})=\alpha_0\}}y^{2}\left(n^{1+\frac{1}{\alpha_0}}
f_{Y^{p}_{ns}}\left(n^{\frac{1}{\alpha_0}}y\right)-\frac{c(Y^{p}_{ns})}{|y|^{\alpha_0+1}}\right)dyds\\
\label{eq:4.12}&+\int_0^{t}\int_{\R}1_{\{\alpha(Y^{p}_{ns})>\alpha_0\}}h^{2}\left(y\right)n^{1-\frac{\alpha(Y^{p}_{ns})}{\alpha_0}}
\frac{c(Y^{p}_{ns})}{|y|^{\alpha(Y^{p}_{ns})+1}}dyds\\
\label{eq:4.13}&+\int_0^{t}\int_{(-\delta,\delta)^{c}}1_{\{\alpha(Y^{p}_{ns})>\alpha_0\}}h^{2}\left(y\right)\left(n^{1+\frac{1}{\alpha_0}}
f_{Y^{p}_{ns}}\left(n^{\frac{1}{\alpha_0}}y\right)-n^{1-\frac{\alpha(Y^{p}_{ns})}{\alpha_0}}
\frac{c(Y^{p}_{ns})}{|y|^{\alpha(Y^{p}_{ns})+1}}\right)dyds\\
\label{eq:4.14}&+\int_0^{t}\int_{(-\delta,\delta)}1_{\{\alpha(Y^{p}_{ns})>\alpha_0\}}y^{2}\left(n^{1+\frac{1}{\alpha_0}}
f_{Y^{p}_{ns}}\left(n^{\frac{1}{\alpha_0}}y\right)-n^{1-\frac{\alpha(Y^{p}_{ns})}{\alpha_0}}
\frac{c(Y^{p}_{ns})}{|y|^{\alpha(Y^{p}_{ns})+1}}\right)dyds.\end{align}
 By (\ref{eq:4.6}) and the dominated
convergence theorem, (\ref{eq:4.10}) and (\ref{eq:4.13})  converge
to $0$ $\mathbb{P}^{x}$-a.s., when $n\longrightarrow\infty.$ Let us
prove that (\ref{eq:4.11})
 converges to $0$ $\mathbb{P}^{x}$-a.s., when $n\longrightarrow\infty$ and
$\delta\longrightarrow0$, respectively. By using (\ref{eq:4.6}), we
have
\begin{eqnarray*}&&\int_0^{t}\int_{(-\delta,\delta)}1_{\{\alpha(Y^{p}_{ns})=\alpha_0\}}y^{2}\left(n^{1+\frac{1}{\alpha_0}}
f_{Y^{p}_{ns}}\left(n^{\frac{1}{\alpha_0}}y\right)-\frac{c(Y^{p}_{ns})}{|y|^{\alpha_0+1}}\right)dyds\\&&\begin{split}
=&\int_0^{t}\int_{(-n^{\frac{1}{\alpha_0}}\delta,n^{\frac{1}{\alpha_0}}\delta)}1_{\{\alpha(Y^{p}_{ns})=\alpha_0\}}y^{2}n^{1-\frac{2}{\alpha_0}}
f_{Y^{p}_{ns}}\left(y\right)dyds\\
&-\int_0^{t}\int_{(-\delta,\delta)}1_{\{\alpha(Y^{p}_{ns})=\alpha_0\}}|y|^{1-\alpha_0}c(Y^{p}_{ns})dyds\\
=&\int_0^{t}\int_{(-y_{\varepsilon},y_{\varepsilon})}1_{\{\alpha(Y^{p}_{ns})=\alpha_0\}}y^{2}n^{1-\frac{2}{\alpha_0}}
f_{Y^{p}_{ns}}\left(y\right)dyds\\
&+\int_0^{t}\int_{(-n^{\frac{1}{\alpha_0}}\delta,-y_{\varepsilon})\cup(
y_{\varepsilon},n^{\frac{1}{\alpha_0}}\delta)}1_{\{\alpha(Y^{p}_{ns})=\alpha_0\}}y^{2}n^{1-\frac{2}{\alpha_0}}
f_{Y^{p}_{ns}}\left(y\right)dyds\\
&+\frac{2}{2-\alpha_0}\delta^{2-\alpha_0}\int_0^{t}1_{\{\alpha(Y^{p}_{ns})=\alpha_0\}}c(Y^{p}_{ns})ds\\
\leq&
n^{1-\frac{2}{\alpha_0}}\int_0^{t}\int_{(-y_{\varepsilon},y_{\varepsilon})}1_{\{\alpha(Y^{p}_{ns})=\alpha_0\}}y^{2}
f_{Y^{p}_{ns}}\left(y\right)dyds\\
&+(1+\varepsilon)n^{1-\frac{2}{\alpha_0}}\int_0^{t}\int_{(-n^{\frac{1}{\alpha_0}}\delta,-y_{\varepsilon})\cup(
y_{\varepsilon},n^{\frac{1}{\alpha_0}}\delta)}1_{\{\alpha(Y^{(n)}_{s})=\alpha_0\}}y^{2}\frac{c(Y^{p}_{ns})}{|y|^{\alpha_0+1}}
dyds\\
&+\frac{2}{2-\alpha_0}\delta^{2-\alpha_0}\int_0^{t}1_{\{\alpha(Y^{p}_{ns})=\alpha_0\}}c(Y^{p}_{ns})ds
\end{split}
\end{eqnarray*}
\begin{align*}=&n^{1-\frac{2}{\alpha_0}}\int_0^{t}\int_{(-y_{\varepsilon},y_{\varepsilon})}1_{\{\alpha(Y^{p}_{ns})=\alpha_0\}}y^{2}
f_{Y^{p}_{ns}}\left(y\right)dyds\\
&+(1+\varepsilon)\frac{2}{2-\alpha_0}\delta^{2-\alpha_0}\int_0^{t}
1_{\{\alpha(Y^{p}_{ns})=\alpha_0\}}c(Y^{p}_{ns})ds \\
&-(1+\varepsilon)n^{1-\frac{2}{\alpha_0}}\frac{2}{2-\alpha_0}y_{\varepsilon}^{2-\alpha_0}\int_0^{t}
1_{\{\alpha(Y^{p}_{ns})=\alpha_0\}}c(Y^{p}_{ns})ds\\
&+\frac{2}{2-\alpha_0}\delta^{2-\alpha_0}\int_0^{t}1_{\{\alpha(Y^{p}_{ns})=\alpha_0\}}c(Y^{p}_{ns})ds.
\end{align*}  Now, by (\ref{eq:1.5}) and the dominated convergence
theorem, we have
$$\lim_{\delta\longrightarrow0}\lim_{n\longrightarrow\infty}\int_0^{t}\int_{(-\delta,\delta)}1_{\{\alpha(Y^{p}_{ns})=\alpha_0\}}y^{2}\left(n^{1+\frac{1}{\alpha_0}}
f_{Y^{p}_{ns}}\left(n^{\frac{1}{\alpha_0}}y\right)-\frac{c(Y^{p}_{ns})}{|y|^{\alpha_0+1}}\right)dyds=0
\quad \mathbb{P}^{x}\textrm{-a.s.}$$ In completely the same way one
can prove that (\ref{eq:4.14}) converges to 0 $\mathbb{P}^{x}$-a.s.,
when $n\longrightarrow\infty$ and $\delta\longrightarrow0$,
respectively. In order to prove that (\ref{eq:4.12}) converges in
$L^{2}(\Omega,\mathbb{P}^{x})$ to $0$, when
$n\longrightarrow\infty$, we define
$$V_n(z):=\int_{\R}h^{2}(y)\left(1_{\{\alpha(z)>\alpha_0\}}n^{1-\frac{\alpha(z)}{\alpha_0}}\frac{c(z)}{|y|^{\alpha(z)+1}}-\int_{\R/\Lambda}1_{\{\alpha(x)>\alpha_0\}}(x)n^{1-\frac{\alpha(x)}{\alpha_0}}\frac{c(x)}{|y|^{\alpha(x)+1}}\pi(dx)\right)dy$$
and proceed as for (\ref{eq:4.4}). It remains to prove that
(\ref{eq:4.9}) converges in $L^{2}(\Omega,\mathbb{P}^{x})$ to
$\tilde{C}_t^{0}$, when $n\longrightarrow\infty$. Let us define
$$V(z):=\int_{\R}h^{2}(y)\left(1_{\{\alpha(z)=\alpha_0\}}\frac{c(z)}{|y|^{\alpha_0+1}}-\int_{\R/\Lambda}1_{\{\alpha(x)=\alpha_0\}}\frac{c(x)}{|y|^{\alpha_0+1}}\pi(dx)\right)dy.$$
By $\tau$-periodicity of the functions $\alpha(x)$ and $c(x)$, the
function $V(z)$ is $\tau$-periodic and
$$\int_{\R/\Lambda}V(z)\pi(dz)=0.$$ Hence, by repeating the same calculation as
for (\ref{eq:4.4}), we have the claim. Therefore, by \cite[Theorem
VIII.2.17]{jacod}, we have proved that the sequence of processes
$\textbf{Y}^{p}_{n}$ converges in distribution to symmetric
$\alpha_0$-stable L\'{e}vy process $\textbf{L}$ with the compensator
(L\'{e}vy measure) $N^{0}(ds,dy).$

Now, let us  prove that the periodic stable-like chain
$\chain{X^{p}}$ is recurrent if and only if $\alpha_0\geq1$. By
\cite [Lemmas 2 and 3]{franke}, the set of recurrent paths $R(O)$ is
a continuity set for the probability measure
$\mathbb{P}^{x}_{\textbf{L}}(\cdot)$ for all $x\in\R$ and all open
bounded sets $O\subseteq\R$. Furthermore, since $\textbf{L}$ is a
$\lambda$-irreducible T-model (note that (\ref{eq:2.1}) is trivially
satisfied), by Proposition \ref{p2.4}, $\textbf{L}$ is recurrent if
and only if $\mathbb{P}^{x}_{\textbf{L}}(R(O))=1$ for all  $x\in\R$
and all open bounded sets $O\subseteq\R$, and it is transient if and
only if $\mathbb{P}^{x}_{\textbf{L}}(T(O))=1$ for all $x\in\R$ and
all open bounded sets $O\subseteq\R$.

Let $x\in\R$ be an arbitrary starting point and let $O\subseteq\R$
be an arbitrary open bounded set. By \cite[Theorem 2.1]{bil}, we
have
\begin{align}\label{eq:4.15}\lim_{n\longrightarrow\infty}\mathbb{P}^{x}_{\textbf{Y}^{p}_{n}}(R(O))=\mathbb{P}^{x}_{\textbf{L}}(R(O)).\end{align}
If the stable-like chain $\chain{X^{p}}$ is recurrent, since it is
 $\lambda$-irreducible T-model, it is H-recurrent as well. Hence,
by Proposition \ref{p2.7}, all the processes $\textbf{Y}^{p}_{n}$,
$n\in\N$, are H-recurrent. This implies
$$\mathbb{P}^{x}_{\textbf{Y}^{n}_{n}}(R(O))=1\quad\textrm{for
all}\quad n\in\N.$$ Therefore, by (\ref{eq:4.15}),
$\mathbb{P}_{\textbf{L}}(R(O)_x)=1$, i.e., $\textbf{L}$ is
recurrent.

Let us assume that  the periodic stable-like chain $\chain{X^{p}}$
is transient. Then, by Proposition \ref{p2.4},
$\mathbb{P}^{x}\left(\tau_O<\infty\right)=0$ for all $x\in\R$ and
all open bounded sets $O\subseteq\R$. Hence, by Proposition
\ref{p2.7}, $\mathbb{P}^{x}\left(\tau^{n}_O<\infty\right)=0$, i.e.,
$$\mathbb{P}^{x}_{\textbf{Y}^{p}_{n}}(R(O))=0$$ for all $n\in\N$, all $x\in\R$ and all open bounded sets $O\subseteq\R$. Therefore, by (\ref{eq:4.15}), $\mathbb{P}^{x}_{\textbf{L}}(R(O))=0$, i.e.,
$\textbf{L}$ is transient. Finally, by \cite[Corollary 37.17]{sato},
 $\textbf{L}$ is recurrent if and only if
$\alpha_0\geq1$. This accomplishes the proof.
\end{proof}

\begin{remark}\rm{\begin{enumerate}
                    \item [(i)] In Theorem \ref{tm1.2} we assume that the densities
$\{f_x\}_{x\in\R}$ satisfy $f_x(-y)=f_x(y)$ for all $x,y\in\R$ and
$f_x(y)\sim c(x)|y|^{-\alpha(x)-1},$ when
$|y|\longrightarrow\infty.$ This assumptions can be relaxed. Let
$\alpha:\R\longrightarrow(0,2)$ and $c_+,
c_-:\R\longrightarrow(0,\infty)$
  be Borel measurable
    functions   and let $\{f_x\}_{x\in\R}$ be an arbitrary family of  probability densities on $\R$.  Furthermore, let us assume that the function
$x\longmapsto f_x$ is a periodic function with period $\tau>0$ and
that the following conditions are satisfied:
\begin{description}
\item [\textbf{(PC1')}] the function $(x,y)\longmapsto f_x(y)$ is continuous and strictly positive;
\item [\textbf{(PC2')}] $f_x(y)\sim c_+(x)y^{-\alpha(x)-1}$,
when $y\longrightarrow\infty$, and

\ \  \ \ \ $f_x(y)\sim c_-(x)(-y)^{-\alpha(x)-1},$ when
$y\longrightarrow-\infty$, for all $x\in\R$;
                          \item [\textbf{(PC3')}]
 $\displaystyle\lim_{y\longrightarrow\infty}\sup_{x\in[0,\tau]}\left|f_x(y)\frac{y^{\alpha(x)+1}}{c_+(x)}-1\right|=0\quad \textrm{and}$
 $\displaystyle\lim_{y\longrightarrow-\infty}\sup_{x\in[0,\tau]}\left|f_x(y)\frac{|y|^{\alpha(x)+1}}{c_-(x)}-1\right|=0;$
\item [\textbf{(PC4')}] $\displaystyle\inf_{x\in
                          [0,\tau]}(c_-(x)\wedge c_+(x))>0$.
\end{description}
Hence, the densities $\{f_x\}_{x\in\R}$ have two-tail behavior. Let
$\chain{\bar{X}^{p}}$ be a Markov chain given by the transition
kernel $ \bar{p}(x,dy):=f_x(y-x)dy.$  By completely the same
arguments as in the proof of Theorem \ref{tm1.2}, we can deduce
recurrence and transience property of the chain
$\chain{\bar{X}^{p}}$. If the set
$\{x\in\R:\alpha(x)=\alpha_0:=\inf_{x\in\R}\alpha(x)\}$ has positive
Lebesgue measure, then by  subordination of the chain
$\chain{\bar{X}^{p}}$ with the Poisson process $\process{N}$ with
parameter $1$ (independent of the chain $\chain{\bar{X}^{p}}$), one
can prove that the process
$\{n^{-\frac{1}{\alpha_0}}\bar{X}^{p}_{N_{nt}}\}_{t\geq0}$ converges
in distribution, with respect to the Skorohod topology, to
$\alpha_0$-stable L\'{e}vy process. In general, this
$\alpha_0$-stable L\'{e}vy process is not  symmetric anymore.
Non-symmetry of the densities $\{f_x\}_{x\in\R}$ implies that the
$\alpha_0$-stable L\'{e}vy process has a nonzero shift parameter,
and two-tail behavior implies that the $\alpha_0$-stable L\'{e}vy
process has a nonzero skewness parameter. Hence, by \cite[Corollary
37.17]{sato}, the only recurrent cases are if either $\alpha_0>1$
and shift parameter  vanishes or $\alpha_0=1$ and skewness parameter
vanishes.
                    \item [(ii)] As already mentioned, it is shown  in \cite{franke}  that
if the functions $\alpha:\R\longrightarrow(0,2)$ and
$\gamma:\R\longrightarrow(0,\infty)$ are continuously differentiable
with bounded derivative and periodic  and  if the set
$\{x\in\R:\alpha(x)=\alpha_0:=\inf_{x\in\R}\alpha(x)\}$ has positive
Lebesgue measure, then the  stable-like process with the symbol
$p(x,\xi)=\gamma(x)|\xi|^{\alpha(x)}$  is recurrent if and only if
$\alpha_0\geq1.$ In general, we cannot apply Theorem \ref{tm1.2} for
the discrete-time version of this stable-like process, i.e., for the
stable-like chain $\chain{X^{\alpha(x)}}$ (the chain given by
(\ref{eq:1.2})), since we do not have a proof that its transition
densities satisfy condition (PC3). But, by repeating the proof of
Theorem \ref{tm1.1} we deduce: If $\alpha:\R\longrightarrow(0,2)$
and $\gamma:\R\longrightarrow(0,\infty)$ are  continuously
differentiable  and periodic functions with bounded derivative  and
if the set $\{x\in\R:\alpha(x)=\alpha_0:=\inf_{x\in\R}\alpha(x)\}$
has positive Lebesgue measure, then the stable-like  chain
$\chain{X^{\alpha(x)}}$  is recurrent if and only if
$\alpha_0\geq1$.

Similarly, by repeating the proof of Theorem \ref{tm1.1}, we can
prove transience property of the discrete-time version of the
stable-like process considered in \cite{rene-wang-feller}, i.e., the
process given by the symbol $p(x,\xi)=\gamma(x)|\xi|^{\alpha(x)}$,
where $\alpha:\R\longrightarrow(0,2)$ and
$\gamma:\R\longrightarrow(0,\infty)$ are continuously differentiable
functions with bounded derivative and such that
$\limsup_{|x|\longrightarrow\infty}\alpha(x)<1$ and
$0<\inf_{x\in\R}\gamma(x)\leq\sup_{x\in\R}\gamma(x)<\infty$.
                  \end{enumerate}
}
\end{remark}

\section{Discrete state case}
In this section we derive the same recurrence and transience
criteria as in Theorems \ref{tm1.1} and \ref{tm1.2} for discrete
version of the stable-like chains $\chain{X^{(\alpha,\beta)}}$ and
$\chain{X^{p}}$ (the chains given by (\ref{eq:1.3}) and
(\ref{eq:1.4})).   Without loss of generality, we treat the case on
the state space $\ZZ$. Let $\alpha:\ZZ\longrightarrow(0,2)$ and
$c:\ZZ\longrightarrow(0,\infty)$ be  arbitrary functions and let
$\{f_{i}\}_{i\in\ZZ}$ be a family of probability functions on $\ZZ$
which satisfies $f_{i}(j)\sim c(i)|j|^{-\alpha(i)-1},$ when
$|j|\longrightarrow\infty$. Let $\chain{X^{d}}$ be a Markov chain on
$\ZZ$ given by the following transition kernel
$$p(i,j):=f_{i}(j-i).$$ The chain $\chain{X^{d}}$ can be
understood as a discrete version of the stable-like chain
$\chain{X}$, i.e., the probability functions $f_{i}(j)$ are discrete
versions of densities $f_{x}(y).$ It is clear that if $f_{i}(j)>0$
for all $i,j\in\ZZ$, then the chain $\chain{X^{d}}$ is irreducible.
Therefore, it is
                                  either
                                  recurrent or transient.
                                  If  the  following
conditions are satisfied
\begin{description}
\item[\textbf{(CD1)}] $f_i(j)\sim c(i)|j|^{-\alpha(i)-1},$ when
$|j|\longrightarrow\infty$, for all $i\in\ZZ$;
                                    \item [\textbf{(CD2)}]there exists $k\in \N$ such that
\begin{align*}\lim_{|j|\longrightarrow\infty}\sup_{i\in\{-k,\ldots,k\}^{c}}\left|f_{i}(j)\frac{|j|^{\alpha(i)+1}}{c(i)}-1\right|=0,\end{align*}

                                  \end{description}
 then the chain $\chain{X^{d}}$ is recurrent
                                    if $\liminf_{|i|\longrightarrow\infty}\alpha(i)>1$, and
                                    it is transient if
$\limsup_{|i|\longrightarrow\infty}\alpha(i)<1$ (see \cite{ja}).
Note that  conditions (CD1) and (CD2) also implies irreducibility of
the chain $\chain{X^{d}}$ in the case when $f_{i}(j)>0$ is not
satisfied for all $i,j\in\ZZ$.

\subsection{Step case}

Let $\chain{X^{d(\alpha,\beta)}}$ be a discrete version of the
stable-like chain $\chain{X^{(\alpha,\beta)}}$ given by
(\ref{eq:1.3}), i.e., a special case of the chain $\chain{X^{d}}$
given by the following step functions
 $$\alpha(i)=\left\{\begin{array}{cc}
                                                      \alpha, & i<0 \\
                                                      \beta,&
                                                      i\geq0
                                                    \end{array}\right. \quad \textrm{and} \quad c(i)=\left\{\begin{array}{cc}
                                                      c, & i<0 \\
                                                      d,&
                                                      i\geq0,
                                                    \end{array}\right.$$
                                                    where
                                                    $\alpha,\beta\in(0,2)$
                                                    and
                                                    $c,d\in(0,\infty)$.

Recall that a random walk $\chain{S}$   is \emph{attracted} to a
random variable $X$  if there exist sequences of real numbers
$\{A_n\}_{n\in\N}$ and $\{B_n\}_{n\in\N}$, $B_n>0$ for all $n\in\N$,
such that
$$\frac{S_n}{B_n}-A_n\stackrel{\hbox{\scriptsize d}}{\longrightarrow} X.$$ Here $\stackrel{\hbox{\scriptsize d}}{\longrightarrow}$ denotes convergence in distribution.
 Furthermore, if $A_n=0$ for all $n\in\N$, then we say that the random walk  $\chain{S}$ is \emph{strongly attracted} to $X$.
The random variable $X$ can only have a stable distribution
 (see \cite[Theorem 2.1.1]{ibragimov-linnik}). Now,
from \cite[Theorem 35.2]{gnedenko} which gives necessary and
sufficient conditions in order that a random walk $\chain{S}$ is
attracted to a random variable with stable distribution with the
index of stability $\alpha\in(0,2)$, we easily derive:
\begin{proposition}\label{p5.2}
 Let $\alpha\in(0,2)$ and $c\in(0,\infty)$ be arbitrary and let
$f_{(\alpha,c)}:\ZZ\longrightarrow\R$ be an arbitrary probability
function such that $f_{(\alpha,c)}(j)\sim c|j|^{-\alpha-1},$ when
$|j|\longrightarrow\infty$. Let us assume that
$f_{(\alpha,c)}(-j)=f_{(\alpha,c)}(j)$ holds for all $j\in\ZZ$ if
$\alpha=1$, and  $\sum_{j\in\ZZ}jf_{(\alpha,c)}(j)=0$ holds if
$\alpha>1$. Then the random walk $\chain{S}$ with the jump
distribution
$$\left(\begin{array}{ll}
                                                                 \ldots\ \ -1 \ \ \ \ \ \  \ \ \ \  \ 0 \ \ \ \ \ \ \ \ \  \ 1 \ \ \ \ \ldots \\
\ldots f_{(\alpha,c)}(-1) \ f_{(\alpha,c)}(0) \ f_{(\alpha,c)}(1) \
\ldots
                                                               \end{array}
\right)$$ is strongly  attracted to S$\alpha$S distribution.
\end{proposition}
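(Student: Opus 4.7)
The plan is to verify the hypotheses of Gnedenko's classical attraction theorem \cite[Theorem 35.2]{gnedenko} and then to check that the resulting centering constants may be chosen equal to zero. This reduces the statement to three routine case distinctions keyed to the value of $\alpha$.

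First I would translate the pointwise asymptotics $f_{(\alpha,c)}(j) \sim c|j|^{-\alpha-1}$ into tail asymptotics for the distribution function $F$ of a single jump. Summation gives
\[
1 - F(x) = \sum_{j > x} f_{(\alpha,c)}(j) \sim \frac{c}{\alpha}\, x^{-\alpha}, \qquad F(-x) \sim \frac{c}{\alpha}\, x^{-\alpha},
\]
as $x \to \infty$. Hence $1 - F(x) + F(-x)$ is regularly varying of index $-\alpha$, and the ratio $(1 - F(x))/(1 - F(x) + F(-x))$ tends to $1/2$. These are exactly the hypotheses of Gnedenko's theorem for attraction to a stable law of index $\alpha$ with skewness parameter $\beta = 0$, i.e.\ to an S$\alpha$S distribution. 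It follows that with a normalizer $B_n$ of order $n^{1/\alpha}$ and some centering sequence $A_n$, one has $S_n/B_n - A_n \longrightarrow Z$ in distribution, where $Z$ is S$\alpha$S.

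Next I would verify that $A_n$ may be taken to be zero. For $\alpha > 1$, Gnedenko's formula gives $A_n \sim n \mathbb{E}[X_1]/B_n$, which vanishes by the hypothesis $\sum_j j f_{(\alpha,c)}(j) = 0$. For $\alpha = 1$, the symmetry $f_{(\alpha,c)}(-j) = f_{(\alpha,c)}(j)$ makes each $S_n$ a symmetric random variable, so the limit must be symmetric and no translation is possible, forcing $A_n \equiv 0$. For $\alpha < 1$, the centering takes the form $A_n \sim n \mathbb{E}[X_1 \mathbf{1}_{\{|X_1| \leq B_n\}}]/B_n$; the equality of the leading tail constants $c$ on both sides forces cancellation of the dominant terms in the truncated mean, and what remains is of order $o(B_n/n) = o(n^{1/\alpha - 1})$, so $A_n \to 0$.

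I expect the main obstacle to lie in the $\alpha < 1$ case: the hypothesis gives only \emph{asymptotic} symmetry of the tails, not exact symmetry, so controlling the truncated mean $\mathbb{E}[X_1 \mathbf{1}_{\{|X_1|\leq B_n\}}] = \sum_{0 < |j| \leq B_n} j f_{(\alpha,c)}(j)$ requires pairing positive and negative indices and bounding the remainder $\sum_{0 < j \leq B_n} j\bigl(f_{(\alpha,c)}(j) - f_{(\alpha,c)}(-j)\bigr)$ against the growth rate $B_n/n$. The remaining two cases reduce to mechanical verifications from Gnedenko's explicit formulas for the centering constants.
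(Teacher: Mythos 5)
Your proposal is correct and follows exactly the route the paper takes: the paper gives no written proof beyond citing Gnedenko's Theorem 35.2 and asserting the proposition "easily" follows, and your verification of the regular-variation/tail-balance hypotheses plus the case-by-case check that the centering constants $A_n$ may be taken to vanish is precisely that omitted derivation. The only remark worth adding is that in the case $\alpha<1$ the vanishing of the centering is standard and does not really hinge on pairing the two tails: the truncated-mean contribution $n B_n^{-1}\mathbb{E}\bigl[|X_1|\mathbf{1}_{\{|X_1|\leq\varepsilon B_n\}}\bigr]=O(\varepsilon^{1-\alpha})$ is uniformly small, so $S_n/B_n$ converges with no centering and the symmetry of the limit comes from the balance of the tails alone.
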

From Proposition \ref{p5.2}, as a special case of    \cite[Theorem
                                                    2]{rogozin},
we
                                                    have:
                                                    \begin{theorem}If
                                                    the probability
                                                    functions
                                                    $f_{(\alpha,c)}(j):=f_i(j)$,
                                                    for $i<0$,
                                                    and
                                                    $f_{(\beta,d)}(j):=f_i(j)$,
                                                    for $i\geq0$,
                                                    appearing in the
                                                    definition of
                                                    the chain
                                                    $\chain{X^{d(\alpha,\beta)}}$,
                                                    satisfy
                                                    $f_{(\alpha,c)}(j)=f_{(\alpha,c)}(-j)$
                                                    and
                                                    $f_{(\beta,d)}(j)=f_{(\beta,d)}(-j)$
                                                    for all
                                                    $j\in\ZZ$,
                                                    then
                                                    the chain
                                                    $\chain{X^{d(\alpha,\beta)}}$
                                                    is recurrent
                                                    if
                                                    $\alpha+\beta>2$,
                                                    and it is
                                                    transient if
                                                    $\alpha+\beta<2.$\end{theorem}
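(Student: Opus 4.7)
The plan is to reduce the statement directly to Rogozin's theorem on recurrence and transience of \emph{oscillating random walks}, i.e., Markov chains on $\ZZ$ whose one-step jump law depends only on the sign of the current state. Our chain $\chain{X^{d(\alpha,\beta)}}$ is precisely of this form: the jump law is $f_{(\alpha,c)}$ when the current state is negative and $f_{(\beta,d)}$ when the current state is non-negative. Rogozin's criterion expresses recurrence and transience of such a chain in terms of the indices of stability of the two limiting stable laws to which the two jump distributions are attracted, and the announced dichotomy $\alpha+\beta>2$ (recurrence) versus $\alpha+\beta<2$ (transience) is exactly the statement of that theorem once the two attraction indices are identified as $\alpha$ and $\beta$.

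Concretely, first I would invoke Proposition \ref{p5.2} twice. Under the symmetry hypothesis $f_{(\alpha,c)}(-j)=f_{(\alpha,c)}(j)$ the tail assumption $f_{(\alpha,c)}(j)\sim c|j|^{-\alpha-1}$ (which is part of the definition of $\chain{X^{d}}$) automatically gives $\sum_{j}jf_{(\alpha,c)}(j)=0$ whenever $\alpha>1$, and the symmetry makes the $\alpha=1$ condition trivial; hence Proposition \ref{p5.2} applies and the random walk with increments distributed as $f_{(\alpha,c)}$ is strongly attracted to an S$\alpha$S law. The same argument applied to $f_{(\beta,d)}$ yields strong attraction to an S$\beta$S law. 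Thus the two one-sided jump laws satisfy exactly the attraction-to-stable hypotheses required by Rogozin's Theorem 2.

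Next I would check the remaining bookkeeping hypotheses of Rogozin's theorem: that the chain is irreducible (which follows from (CD1)--(CD2) and the strict positivity that will actually be guaranteed because the symmetric $\alpha$-stable-like densities give positive mass to every integer), that the two jump distributions have mean zero in the appropriate sense (again provided by symmetry), and that the limiting stable distributions are symmetric so that the recurrence/transience criterion takes its cleanest form. After that verification the conclusion is immediate: Rogozin's theorem then says that the oscillating walk is recurrent when the sum of the two indices of stability exceeds $2$ and transient when the sum is less than $2$, which is exactly our claim.

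The main obstacle I expect is the verification step for Rogozin's theorem rather than any genuinely new probabilistic argument: one must carefully match the normalizing constants and centering sequences in the attraction statement of Proposition \ref{p5.2} with those used by Rogozin, and confirm that the symmetric case we are in really lands in the ``nice'' part of his classification, so that the critical case $\alpha+\beta=2$ is the only one excluded. Once this matching is done there is nothing further to prove, since the theorem to be established is stated explicitly as a special case of Rogozin's result.
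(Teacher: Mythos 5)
Your proposal matches the paper's argument exactly: the paper proves this theorem by applying Proposition \ref{p5.2} to each of the two jump laws (symmetry plus the tail asymptotics give strong attraction to S$\alpha$S and S$\beta$S laws) and then citing the result as a special case of Rogozin's Theorem 2 on oscillating random walks. The only minor caveat is that strict positivity of $f_{(\alpha,c)}$ and $f_{(\beta,d)}$ at every integer is not guaranteed by the hypotheses; irreducibility instead follows from conditions (CD1)--(CD2), as the paper notes earlier in the section.
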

Note that previous theorem does not say anything about the case when
$\alpha+\beta=2.$ This case is not covered by \cite{rogozin} and it
seems to be much more complicated.

\subsection{Periodic case}

In this subsection we consider a  discrete version of the periodic
stable-like chain $\chain{X^{p}}$  given by (\ref{eq:1.4}).
 Let  $\chain{X^{dp}}$ be  a Markov chain on $\ZZ$  given by
 $$\alpha(i)=\left\{\begin{array}{cc}
                                                      \alpha, & i\in 2\ZZ \\
                                                      \beta,&
                                                      i\in2\ZZ+1
                                                    \end{array}\right.\quad \textrm{and} \quad c(i)=\left\{\begin{array}{cc}
                                                      c, & i\in 2\ZZ \\
                                                      d,&
                                                      i\in2\ZZ+1,
                                                    \end{array}\right.$$where
                                                    $\alpha,\beta\in(0,2)$
                                                    and
                                                    $c,d\in(0,\infty)$,
                                                    and let us
                                                    assume that
                                                    probability
                                                    functions
                                                    $f_{(\alpha,c)}(j):=f_{2i}(j)$
                                                    and
                                                    $f_{(\beta,d)}(j):=f_{2i+1}(j)$,
                                                    $i\in\ZZ$,
                                                    satisfy $f_{(\alpha,c)}(-j)=f_{(\alpha,c)}(j)$ and $f_{(\beta,d)}(-j)=f_{(\beta,d)}(j)$ for all $j\in\ZZ$.
 Let us define the following stopping times
inductively:
                                                    $T_0^{\alpha}:=0$,
                                                    $T_0^{\beta}:=0$,
                                                    $T_n^{\alpha}:=\inf\{k>T_{n-1}^{\alpha}:X^{dp}_k\in2\ZZ\}$
                                                    and
                                                    $T_n^{\beta}:=\inf\{k>T_{n-1}^{\beta}:X^{dp}_k\in2\ZZ+1\},$
                                                    for $n\in\N$.
\begin{proposition}\label{p5.4}
                                                    $\mathbb{P}^{i}(T_n^{\alpha}<\infty)=\mathbb{P}^{i}(T_n^{\beta}<\infty)=1$
                                                   for all
                                                    $i\in\ZZ$ and
                                                    all
                                                     $n\in\N.$
\end{proposition}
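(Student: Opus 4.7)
My plan is to reduce the claim to recurrence of a simple two-state Markov chain. The key observation is that the transition kernel of $\chain{X^{dp}}$ depends on $i$ only through the parity $i \bmod 2$, since $\alpha(i)$, $c(i)$, and hence $f_i$ take only two values (one on $2\ZZ$ and one on $2\ZZ+1$). So I would introduce the parity process $Y_n := X^{dp}_n \bmod 2 \in \{0,1\}$ and verify that $\chain{Y}$ is itself a Markov chain on $\{0,1\}$. Indeed, if $X^{dp}_n = i \in 2\ZZ$ then $X^{dp}_{n+1} = i + Z_\alpha$ with $Z_\alpha \sim f_{(\alpha,c)}$, so the parity of $X^{dp}_{n+1}$ equals the parity of $Z_\alpha$, independently of the exact value of $i$; the case $i \in 2\ZZ+1$ is symmetric with $Z_\beta \sim f_{(\beta,d)}$.

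Next I would set $p := \mathbb{P}(Z_\alpha \text{ is odd})$ and $q := \mathbb{P}(Z_\beta \text{ is odd})$, which are the parity-flip probabilities $0 \to 1$ and $1 \to 0$ of $\chain{Y}$. Using the tail asymptotic $f_{(\alpha,c)}(j) \sim c|j|^{-\alpha-1}$ as $|j| \to \infty$, there exists an arbitrarily large odd integer $j^*$ with $f_{(\alpha,c)}(j^*) > 0$, whence $p \geq f_{(\alpha,c)}(j^*) > 0$; the same argument applied to $f_{(\beta,d)}$ gives $q > 0$. Therefore $\chain{Y}$ has both off-diagonal transition probabilities strictly positive, so it is irreducible on a finite state space and hence positive recurrent.

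To conclude, I would invoke the standard fact that every irreducible Markov chain on a finite state space visits each of its states infinitely often almost surely, from any starting distribution. Applied to $\chain{Y}$, this means that for every $i \in \ZZ$, $\mathbb{P}^i$-almost surely the event $\{X^{dp}_n \in 2\ZZ\}$ occurs for infinitely many $n$ and the event $\{X^{dp}_n \in 2\ZZ+1\}$ occurs for infinitely many $n$. By the inductive definitions of $T_n^{\alpha}$ and $T_n^{\beta}$, this translates directly into $\mathbb{P}^i(T_n^{\alpha} < \infty) = \mathbb{P}^i(T_n^{\beta} < \infty) = 1$ for every $n \in \N$ and every $i \in \ZZ$.

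I do not foresee a serious obstacle: the only nontrivial inputs are the Markovian character of $\chain{Y}$ (which follows from the fact that the jump law depends on the state only through its parity) and the heavy-tail positivity that forces $p, q > 0$. The rest is a one-line application of finite-state Markov chain recurrence.
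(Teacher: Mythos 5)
Your proof is correct. It rests on exactly the same quantitative fact as the paper's proof, namely that the one-step probability of staying inside a parity class is a constant strictly less than $1$ (equivalently, your $p=\mathbb{P}(Z_\alpha\ \textrm{odd})>0$ and $q=\mathbb{P}(Z_\beta\ \textrm{odd})>0$, which in the paper appears as $p(2i+1,2\ZZ+1)=\sum_{j\in2\ZZ}f_{(\beta,d)}(j)<1$), and in both cases the positivity is forced by the tail asymptotics $f_{(\alpha,c)}(j)\sim c|j|^{-\alpha-1}$. Where you differ is in how this fact is converted into the conclusion: the paper bounds $\mathbb{P}^{i}(X^{dp}_l\in2\ZZ+1,\ 1\leq l\leq k)$ directly by $C_iC^{k-1}$ to get $\mathbb{P}^{i}(T_1^{\alpha}=\infty)=0$, and then handles general $n$ by induction via the strong Markov property; you instead observe that the transition kernel is lumpable with respect to the partition $\{2\ZZ,2\ZZ+1\}$, so the parity process is itself an irreducible two-state Markov chain, and you invoke recurrence of finite irreducible chains to get all $T_n^{\alpha},T_n^{\beta}$ finite at once. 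Your route buys a cleaner endgame (no explicit induction or strong Markov step), at the small cost of having to verify lumpability — i.e., that $p(i,2\ZZ)$ depends on $i$ only through its parity — which holds here because the jump law depends only on the parity of the current state and the jump's parity determines the parity of the next state. Both arguments are complete and essentially elementary; yours generalizes verbatim to the period-$m$ case considered at the end of Section 5, where the lumped chain lives on $\ZZ/m\ZZ$.
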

 \begin{proof}
 Let us prove that $\mathbb{P}^{i}(T_n^{\alpha}<\infty)=1$
                                                    for all
                                                    $i\in\ZZ$ and all $n\in\N$
                                                    by induction.
Let $i\in\ZZ$ be arbitrary and let $n=1$.  We have
\begin{align*}\mathbb{P}^{i}(T_1^{\alpha}=\infty)=&\mathbb{P}^{i}(X^{pd}_k\in2\ZZ+1, \  \forall
k\in\N)=\displaystyle\lim_{k\longrightarrow\infty}\mathbb{P}^{i}(X^{dp}_l\in2\ZZ+1,\
1\leq l\leq k)\\
=&\displaystyle\lim_{k\longrightarrow\infty}\displaystyle\sum_{i_1\in2\ZZ+1}p(i,i_1)\displaystyle\sum_{i_2\in2\ZZ+1}p(i_1,i_2)
\ldots\displaystyle\sum_{i_{k-1}\in2\ZZ+1}p(i_{k-2},i_{k-1})p(i_{k-1},2\ZZ+1).\end{align*}
Note that $p(2i+1,2\ZZ+1)=\sum_{j\in2\ZZ}f_{(\beta,d)}(j)<1$ for all
$i\in\ZZ$. Therefore, if we put $C:=\sum_{j\in2\ZZ}f_{(\beta,d)}(j)$
and $C_i:=p(i,2\ZZ+1)$, we have
\begin{align*}\mathbb{P}^{i}(T_1^{\alpha}=\infty)=\displaystyle\lim_{k\longrightarrow\infty}C_iC^{k-1}=0,\end{align*}
i.e., $\mathbb{P}^{i}(T^{\alpha}_1<\infty)=1$. Let us assume that
$\mathbb{P}^{i}(T_{n-1}^{\alpha}<\infty)=1$ and let us prove that
$\mathbb{P}^{i}(T_n^{\alpha}<\infty)=1$. By denoting
$N:=T_{n-1}^{\alpha}$ and using strong Markov property we have
\begin{align*}\mathbb{P}^{i}(T_n^{\alpha}<\infty)=\mathbb{E}^{i}[\mathbb{E}^{i}[1_{\{T_1^{\alpha}<\infty\}}\circ\theta_N|\mathcal{F}_N]]
=\mathbb{E}^{i}[\mathbb{E}^{X_N}[1_{\{T^{\alpha}_1<\infty\}}]]=\sum_{j\in
2\ZZ}\mathbb{E}^{i}[1_{\{X_N=j\}}]=1,\end{align*} where $\theta_n$
is the shift operator on the canonical state space
$\ZZ^{\{0,1,\ldots\}}$. In the completely analogously way we prove
that $\mathbb{P}^{i}(T_n^{\beta}<\infty)=1$
                                                    for all
                                                    $i\in\ZZ$ and all $n\in\N$.\end{proof}
For  $n\geq0$, let us put $Y^{\alpha}_n=X^{dp}_{T_n^{\alpha}}$ and
$Y^{\beta}_n=X^{dp}_{T_n^{\beta}}$, then, from  Proposition
\ref{p5.4},
 $\chain{Y^{\alpha}}$ and $\chain{Y^{\beta}}$ are well defined  Markov chains.
Let $i\in\ZZ$ and let us define the following stopping times:
$\tau_i:=\inf\{n\geq1: X^{dp}_n=i\}$,
$\tau^{\alpha}_i:=\inf\{n\geq1: Y^{\alpha}_n=i\}$ and
$\tau^{\beta}_i=\inf\{n\geq1: Y^{\beta}_n=i\}$.
\begin{proposition}\label{p5.5} For all $i\in\ZZ$,  $n\in\N$,  $j_1,\ldots,j_n\in2\ZZ$ and all $k_1,\ldots,k_n\in2\ZZ+1$ we have $\mathbb{P}^{i}(Y_1^{\alpha}=j_1,\ldots, Y_n^{\alpha}=j_n)>0$ and $\mathbb{P}^{i}(Y_1^{\beta}=k_1,\ldots, Y_n^{\beta}=k_n)>0$. In particular, the chains $\chain{Y^{\alpha}}$ and $\chain{Y^{\beta}}$ are irreducible on their state spaces.
\end{proposition}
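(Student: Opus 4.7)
The plan is to reduce the joint positivity to a one-step positivity statement for $\chain{Y^\alpha}$ (and $\chain{Y^\beta}$) via the strong Markov property of $\chain{X^{dp}}$, and then to witness each one-step positivity by an explicit two-step path of $\chain{X^{dp}}$ through a sufficiently distant intermediate state of the opposite parity.

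First I would apply the strong Markov property at the stopping times $T_k^\alpha$ (finite almost surely by Proposition \ref{p5.4}) to factor
\[
\mathbb{P}^{i}(Y_1^\alpha = j_1,\ldots,Y_n^\alpha = j_n) = \mathbb{P}^{i}(Y_1^\alpha = j_1)\prod_{k=1}^{n-1}\mathbb{P}^{j_k}(Y_1^\alpha = j_{k+1}),
\]
which reduces the claim to showing $\mathbb{P}^{j}(Y_1^\alpha = j') > 0$ for every $j \in \ZZ$ and every $j' \in 2\ZZ$ (and analogously, $\mathbb{P}^{j}(Y_1^\beta = k') > 0$ for every $j \in \ZZ$ and every $k' \in 2\ZZ+1$).

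Next, the tail asymptotics $f_{(\alpha,c)}(l) \sim c|l|^{-\alpha-1}$ and $f_{(\beta,d)}(l) \sim d|l|^{-\beta-1}$ as $|l|\longrightarrow\infty$ produce a constant $M > 0$ such that $f_{(\alpha,c)}(l) > 0$ and $f_{(\beta,d)}(l) > 0$ whenever $|l| \geq M$. Given $j \in \ZZ$ and $j' \in 2\ZZ$, I would pick $m \in 2\ZZ+1$ with $|m - j| \geq M$ and $|m - j'| \geq M$; such an $m$ exists because the two inequalities exclude only a finite set of odd integers. The event $\{X^{dp}_1 = m,\, X^{dp}_2 = j'\}$ is contained in $\{Y_1^\alpha = j'\}$, since $m \notin 2\ZZ$ forces the first positive-time visit of $\chain{X^{dp}}$ to $2\ZZ$ to occur at time $2$ with value $j'$. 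Hence
\[
\mathbb{P}^{j}(Y_1^\alpha = j') \geq f_j(m-j)\, f_m(j'-m) > 0,
\]
because $f_m = f_{(\beta,d)}$ (as $m$ is odd), $f_j$ equals $f_{(\alpha,c)}$ or $f_{(\beta,d)}$ depending on the parity of $j$, and both arguments have absolute value at least $M$.

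The statement for $Y^\beta$ follows from the same construction with the parities swapped, i.e., choosing $m \in 2\ZZ$ with $|m-j|, |m-k'| \geq M$. Irreducibility of $\chain{Y^\alpha}$ on $2\ZZ$ and of $\chain{Y^\beta}$ on $2\ZZ+1$ is then an immediate consequence of specializing the one-step positivity to initial points in the respective state spaces. I do not anticipate a genuine obstacle; the only point that must be stated carefully is that the two-step excursion actually realizes the event $\{Y_1^\alpha = j'\}$, which is automatic once $m$ is chosen of opposite parity to $j'$ so that $\chain{X^{dp}}$ cannot have visited $2\ZZ$ at time $1$.
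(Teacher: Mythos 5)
Your proposal is correct and follows essentially the same route as the paper: both reduce the joint probability to one-step positivity via the (strong) Markov property and then witness $\mathbb{P}^{j}(Y_1^{\alpha}=j')>0$ by a two-step excursion of $\chain{X^{dp}}$ through an intermediate state of the opposite parity at which both transition probabilities are positive. Your version is in fact slightly more careful than the paper's, since you justify the existence of the intermediate state explicitly from the tail asymptotics rather than merely asserting it.
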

\begin{proof} The set $2\ZZ$ is the state space of the chain $\chain{Y^{\alpha}}$,
and the set $2\ZZ+1$ is the state space of the chain
$\chain{Y^{\beta}}$ . Let $i\in\ZZ$ and $j_1\in2\ZZ$ be arbitrary,
then we have
\begin{align*}\mathbb{P}^{i}(Y_1^{\alpha}=j_1)=&p(i,j_1)+\displaystyle\sum_{i_1\in2\ZZ+1}p(i,i_1)p(i_1,j_1)+\displaystyle\sum_{i_1\in2\ZZ+1}p(i,i_1)\displaystyle\sum_{1_2\in2\ZZ+1}p(i_1,i_2)p(i_2,j_1)+\ldots\\
\geq&\displaystyle\sum_{i_1\in2\ZZ+1}p(i,i_1)p(i_1,j_1).\end{align*}
If $i\in2\ZZ$, then we take $i_1\in2\ZZ+1$ such that
$f_{(\alpha,c)}(i_1-i)>0$ and $f_{(\beta,d)}(j_1-i_1)>0$. Therefore,
$$\mathbb{P}^{i}(Y_1^{\alpha}=j_1)\geq f_{(\alpha,c)}(i_1-i)f_{(\beta,d)}(j_1-i_1)>0.$$ If
$i\in2\ZZ+1$, then we take $i_1\in2\ZZ+1$ such that
$f_{(\beta,d)}(i_1-i)>0$ and $f_{(\beta,d)}(j_1-i_1)>0$.  Hence, we
have
$$\mathbb{P}^{i}(Y_1^{\alpha}=j_1)\geq f_{(\beta,d)}(i_1-i)f_{(\beta,d)}(j_1-i_1)>0.$$
Let $i\in\ZZ$ and $j_1,j_2\in2\ZZ$ be arbitrary, then we have
$$\mathbb{P}^{i}(Y_1^{\alpha}=j_1,Y_2^{\alpha}=j_2)=\mathbb{P}^{i}(Y_2^{\alpha}=j_2|Y_1^{\alpha}=j_1)\mathbb{P}^{i}(Y_1^{\alpha}=j_1)=\mathbb{P}^{j_1}(Y_1^{\alpha}=j_2)\mathbb{P}^{i}(Y_1^{\alpha}=j_1)>0.$$
Let $n>2$. Let us suppose that for all $i\in\ZZ$ and for all
$j_1,\ldots j_{n-1}\in2\ZZ$ we have
$$\mathbb{P}^{i}(Y_1^{\alpha}=j_1,\ldots,Y^{\alpha}_{n-1}=j_{n-1})>0.$$ Let  $j_n\in2\ZZ$
be arbitrary, then we have
\begin{align*}\mathbb{P}^{i}(Y_1^{\alpha}=j_1,\ldots,Y^{\alpha}_{n}=j_{n})=&\mathbb{P}^{i}(Y_n^{\alpha}=j_n|Y_{n-1}^{\alpha}=j_{n-1},\ldots,
Y_1^{\alpha}=j_1)\mathbb{P}^{i}(Y_1^{\alpha}=j_1,\ldots,Y_{n-1}^{\alpha}=j_{n-1})\\=&\mathbb{P}^{j_{n-1}}(Y_1^{\alpha}=j_{n})\mathbb{P}^{i}(Y_1^{\alpha}=j_1,\ldots,Y^{\alpha}_{n-1}=j_{n-1})>0.\end{align*}
Analogously we prove the claim for the chain $\chain{Y^{\beta}}$.
Let $i,j\in 2\ZZ$ be arbitrary, then we have
 $$\mathbb{P}^{i}(\tau_j^{\alpha}<\infty)\geq
 \mathbb{P}^{i}(\tau_j^{\alpha}=1)=\mathbb{P}^{i}(Y_1^{\alpha}=j)>0.$$
Similarly, for arbitrary $i,j\in2\ZZ+1$ we have
 $$\mathbb{P}^{i}(\tau_j^{\beta}<\infty)>0.$$ Hence, the chains $\chain{Y^{\alpha}}$ and $\chain{Y^{\beta}}$
 are irreducible.
\end{proof}

\begin{proposition}\label{p5.6} The Markov chains $\chain{X^{dp}}$, $\chain{Y^{\alpha}}$ and $\chain{Y^{\beta}}$ have the same recurrence
property.
\end{proposition}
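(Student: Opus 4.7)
The plan is to reduce the equivalence to a single identity between the number of visits of $\chain{X^{dp}}$ and of $\chain{Y^{\alpha}}$ (respectively $\chain{Y^{\beta}}$) to a fixed state, and then invoke irreducibility of each chain on its state space. Recall that for an irreducible Markov chain on a countable state space, recurrence is equivalent to $U(i,\{i\})=\infty$ for one (and hence every) state $i$ in its state space.

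First I would note that the chain $\chain{X^{dp}}$ is itself irreducible on $\ZZ$: this follows by the same kind of two-step reachability argument used in the proof of Proposition \ref{p5.5}, namely that given any $i,j\in\ZZ$ one can find an intermediate state of the appropriate parity through which the chain moves from $i$ to $j$ with positive probability in at most two steps, using the positivity of $f_{(\alpha,c)}$ and $f_{(\beta,d)}$. Together with Proposition \ref{p5.5} this gives irreducibility of $\chain{X^{dp}}$, $\chain{Y^{\alpha}}$ and $\chain{Y^{\beta}}$ on their respective state spaces $\ZZ$, $2\ZZ$ and $2\ZZ+1$, so recurrence of each reduces to checking $U(i,\{i\})=\infty$ at a single state.

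Next, fixing an arbitrary $i\in 2\ZZ$, I would establish the almost sure identity
\begin{equation*}
\sum_{n=0}^{\infty}1_{\{X_n^{dp}=i\}}=\sum_{m=0}^{\infty}1_{\{Y_m^{\alpha}=i\}}
\end{equation*}
under $\mathbb{P}^{i}$. This is because, by Proposition \ref{p5.4}, every $T_m^{\alpha}$ is almost surely finite, and so the sequence $\{T_m^{\alpha}\}_{m\geq 0}$ enumerates in strictly increasing order exactly those indices $n$ for which $X_n^{dp}\in 2\ZZ$; at these indices $Y_m^{\alpha}=X_{T_m^{\alpha}}^{dp}$, so $X_n^{dp}=i$ holds if and only if $Y_m^{\alpha}=i$ for the corresponding $m$. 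Taking $\mathbb{E}^{i}$ of the identity above gives $U^{X^{dp}}(i,\{i\})=U^{Y^{\alpha}}(i,\{i\})$, whence $i$ is a recurrent state of $\chain{X^{dp}}$ if and only if it is a recurrent state of $\chain{Y^{\alpha}}$. By irreducibility this promotes at once to equivalence of recurrence for the full chains. Applying the same argument to an arbitrary $j\in 2\ZZ+1$ and the stopping times $T_m^{\beta}$ gives the analogous equivalence with $\chain{Y^{\beta}}$, which completes the three-way equivalence.

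The main obstacle will be the verification that $\chain{X^{dp}}$ is itself irreducible, since this is not stated explicitly in the preceding propositions and has to be extracted from the same positivity hypotheses on $f_{(\alpha,c)}$ and $f_{(\beta,d)}$ that powered Proposition \ref{p5.5}; once that is in hand, the rest of the proof is pure bookkeeping through the visit-counting identity and the standard equivalence between recurrence of an irreducible countable-state chain and $U(i,\{i\})=\infty$.
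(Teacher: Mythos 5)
Your proof is correct and follows essentially the same route as the paper: both rest on the observation that $\chain{Y^{\alpha}}$ is the trace of $\chain{X^{dp}}$ on $2\ZZ$, so for a fixed $i\in2\ZZ$ the two chains visit $i$ at exactly the same (time-changed) instants, and then irreducibility promotes the single-state statement to the whole chain. The only difference is cosmetic: the paper equates the first-return probabilities $\mathbb{P}^{i}(\tau_i=\infty)=\mathbb{P}^{i}(\tau_i^{\alpha}=\infty)$, while you equate the occupation counts and hence $U(i,\{i\})$ — two equivalent characterizations of recurrence for an irreducible countable-state chain.
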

\begin{proof} Let $i\in2\ZZ$ be arbitrary, then we have
$$\mathbb{P}^{i}(\tau^{\alpha}_i=\infty)=\mathbb{P}^{i}(Y^{\alpha}_n\in2\ZZ\setminus\{i\},\ n\in\N)=\mathbb{P}^{i}(X^{dp}_n\in\ZZ\setminus\{i\},\
n\in\N)=\mathbb{P}^{i}(\tau_i=\infty).$$ Similarly, for arbitrary
$i\in2\ZZ+1$ we have
$\mathbb{P}^{i}(\tau_i=\infty)=\mathbb{P}^{i}(\tau^{\beta}_i=\infty).$
\end{proof}
\begin{proposition}\label{p5.7}Chains $\chain{Y^{\alpha}}$ and $\chain{Y^{\beta}}$ are symmetric  random walks with jump distributions $\mathbb{P}^{0}(Y_1^{\alpha}\in\cdot)$
and $\mathbb{P}^{1}(Y_1^{\beta}-1\in\cdot)$.
\end{proposition}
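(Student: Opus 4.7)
My plan is to establish Proposition 5.7 by proving two things about each of the chains $\chain{Y^{\alpha}}$ and $\chain{Y^{\beta}}$: (a) the one-step transition probabilities are translation invariant on the appropriate parity sublattice, and (b) the one-step jump distribution is symmetric. Combined with the strong Markov property (which we already have from the strong Markov property of $\chain{X^{dp}}$ at the stopping times $T^{\alpha}_n$, $T^{\beta}_n$, which are finite a.s.\ by Proposition \ref{p5.4}), this immediately yields the random walk structure with the stated jump laws.

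To prove (a) for $\chain{Y^{\alpha}}$, fix $i \in 2\ZZ$ and $j \in 2\ZZ$ and expand, as was done in Proposition \ref{p5.5},
\begin{align*}
\mathbb{P}^{i}(Y^{\alpha}_1=j)
=f_{(\alpha,c)}(j-i)+\sum_{k=1}^{\infty}\sum_{i_1,\dots,i_k\in 2\ZZ+1} f_{(\alpha,c)}(i_1-i)\prod_{l=1}^{k-1}f_{(\beta,d)}(i_{l+1}-i_l)\,f_{(\beta,d)}(j-i_k).
\end{align*}
Because $i\in 2\ZZ$, the change of variables $i_l'=i_l-i$ sends $2\ZZ+1$ bijectively onto itself, and the whole expression (by Tonelli, all summands are nonnegative) turns into exactly the same series evaluated at starting point $0$ with target $j-i$; hence $\mathbb{P}^{i}(Y^{\alpha}_1=j)=\mathbb{P}^{0}(Y^{\alpha}_1=j-i)$. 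The corresponding identity for $\chain{Y^{\beta}}$ is obtained by the identical argument with the roles of $2\ZZ$ and $2\ZZ+1$ swapped, which explains the shift ``$Y^{\beta}_1-1$'' in the statement (we start the random walk at $0$ by translating the odd sublattice).

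To prove (b), I apply the substitution $i_l\mapsto -i_l$ in every term of the above series for $\mathbb{P}^{0}(Y^{\alpha}_1=j)$. Since $-i_l$ ranges over $2\ZZ+1$ as $i_l$ does, and since the symmetry hypotheses $f_{(\alpha,c)}(-y)=f_{(\alpha,c)}(y)$, $f_{(\beta,d)}(-y)=f_{(\beta,d)}(y)$ convert each factor $f_{(\alpha,c)}(i_1)$, $f_{(\beta,d)}(i_{l+1}-i_l)$ and $f_{(\beta,d)}(j-i_k)$ into $f_{(\alpha,c)}(i_1)$, $f_{(\beta,d)}(i_{l+1}-i_l)$ and $f_{(\beta,d)}(-j-i_k)$ respectively, the full sum is mapped to $\mathbb{P}^{0}(Y^{\alpha}_1=-j)$. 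The same computation with $f_{(\alpha,c)}$ and $f_{(\beta,d)}$ interchanged gives symmetry of the jump distribution $\mathbb{P}^{1}(Y^{\beta}_1-1\in\cdot)$ of $\chain{Y^{\beta}}$.

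Finally, to upgrade the one-step identities into the full random walk statement, I invoke the strong Markov property of $\chain{X^{dp}}$ at $T^{\alpha}_n$, exactly as in the last step of the proof of Proposition \ref{p5.5}: conditioning on $\mathcal{F}_{T^{\alpha}_n}$ and using $\mathbb{P}^{j}(Y^{\alpha}_1\in\cdot)=\mathbb{P}^{0}(Y^{\alpha}_1-0\in\cdot-j)$ gives that the increments $Y^{\alpha}_{n+1}-Y^{\alpha}_n$ are i.i.d.\ with the symmetric law $\mathbb{P}^{0}(Y^{\alpha}_1\in\cdot)$, and analogously for $\chain{Y^{\beta}}$. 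The one slightly delicate point I expect to be the main technical obstacle is keeping the bookkeeping of parities correct throughout the infinite-series manipulations; this is purely combinatorial, but Tonelli (nonnegativity of all summands) legitimises every rearrangement, so no convergence issue arises.
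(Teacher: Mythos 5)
Your proposal is correct and follows essentially the same route as the paper: expand the one-step transition probability of the embedded chain as a series over excursions through the odd (resp.\ even) sublattice, use translation invariance of the kernel (which holds on each parity class by periodicity) to reduce to the starting point $0$, exploit the symmetry hypotheses on $f_{(\alpha,c)}$ and $f_{(\beta,d)}$ term by term, and then derive the i.i.d.\ increment structure. The only cosmetic differences are that you make the symmetry computation explicit where the paper declares it ``obvious,'' and you invoke the strong Markov property at $T^{\alpha}_n$ abstractly where the paper verifies independence by directly factorizing the joint law of two consecutive increments.
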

\begin{proof} Note first that for arbitrary $i,j\in\ZZ$ we have
\begin{align*}\mathbb{P}^{0}(Y^{\alpha}_{n+1}=2i-2j|Y^{\alpha}_{n}=0)=&p(0,2i-2j)+\displaystyle\sum_{k_1\in2\ZZ+1}p(0,k_1)p(k_1,2i-2j)\\&+\displaystyle\sum_{k_1\in2\ZZ+1}p(0,k_1)\displaystyle\sum_{k_2\in2\ZZ+1}p(k_1,k_2)p(k_2,2i-2j)+\ldots\\=&p(2i,2j)+\displaystyle\sum_{k_1\in2\ZZ+1}p(2j,k_1+2j)p(k_1+2j,2i)\\&+\displaystyle\sum_{k_1\in2\ZZ+1}p(2j,k_1+2j)\displaystyle\sum_{k_2\in2\ZZ+1}p(k_1+2j,k_2+2j)p(k_2+2j,2i)+\ldots\\=&\mathbb{P}^{0}(Y^{\alpha}_{n+1}=2i|Y^{\alpha}_{n}=2j).\end{align*}
Let us prove that the random variables
$Y^{\alpha}_{n+1}-Y^{\alpha}_n$, $n\geq0$, are symmetric i.i.d.
random variables with respect to the probability measure
$\mathbb{P}^{0}(\cdot)$. Let $n\geq0$. Then we have
\begin{align*}\mathbb{P}^{0}(Y^{\alpha}_{n+1}-Y^{\alpha}_n=2i)=&\displaystyle\sum_{j\in\ZZ}\mathbb{P}^{0}(Y^{\alpha}_{n+1}=2i+2j,
\
Y^{\alpha}_n=2j)\\=&\displaystyle\sum_{j\in\ZZ}\mathbb{P}^{0}(Y^{\alpha}_{n+1}=2i+2j|Y^{\alpha}_n=2j)\mathbb{P}^{0}(Y^{\alpha}_n=2j)=\mathbb{P}^{0}(Y_1^{\alpha}=2i).\end{align*}
 Let  $n\geq1$. Then we have
\begin{align*}&\mathbb{P}^{0}(Y^{\alpha}_{n+1}-Y^{\alpha}_n=2i,\
Y^{\alpha}_{n}-Y^{\alpha}_{n-1}=2j)\\&=\displaystyle\sum_{k\in\ZZ}\mathbb{P}^{0}(Y^{\alpha}_{n+1}=2i+2j,\
Y^{\alpha}_n=2k,\
Y^{\alpha}_{n-1}=2k-2j)\\&=\displaystyle\sum_{k\in\ZZ}\mathbb{P}^{0}(Y^{\alpha}_{n+1}=2i+2k|Y^{\alpha}_n=2k)\mathbb{P}^{0}(Y^{\alpha}_{n}=2k|Y^{\alpha}_{n-1}=2k-2j)
\mathbb{P}^{0}(Y^{\alpha}_{n-1}=2k-2j)\\&=\mathbb{P}^{0}(Y^{\alpha}_1=2i)\mathbb{P}^{0}(Y^{\alpha}_1=2j)=\mathbb{P}^{0}(Y^{\alpha}_{n+1}-Y^{\alpha}_n=2i)\mathbb{P}^{0}(Y^{\alpha}_{n}-Y^{\alpha}_{n-1}=2j).\end{align*}
This proves that the random variables
$Y^{\alpha}_{n+1}-Y^{\alpha}_n$, $n\geq0$, are i.i.d. random
variables. Symmetry is obvious. Completely analogously we prove that
the random variables $Y^{\beta}_{n+1}-Y^{\beta}_n$, $n\geq0$, are
i.i.d. symmetric random variables with respect to the probability
measure $\mathbb{P}^{1}(\cdot)$.\end{proof}
\begin{proposition}\label{p5.8} If $\alpha\wedge\beta<1$, then the chain
 $\chain{X^{dp}}$ is transient.
\end{proposition}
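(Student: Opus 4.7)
The plan is to exploit the sub-chains constructed in Propositions \ref{p5.4}--\ref{p5.7}. By Proposition \ref{p5.6}, the chain $\chain{X^{dp}}$ has the same recurrence/transience behavior as the embedded chains $\chain{Y^{\alpha}}$ and $\chain{Y^{\beta}}$. Without loss of generality I will assume $\alpha \leq \beta$, so that $\alpha_{0} := \alpha < 1$; the case $\beta < \alpha$ is handled symmetrically by working with $\chain{Y^{\beta}}$ instead. By Proposition \ref{p5.7}, $\chain{Y^{\alpha}}$ is a symmetric random walk on $2\ZZ$, irreducible by Proposition \ref{p5.5}, with jump law $\mu(\cdot) := \mathbb{P}^{0}(Y_{1}^{\alpha} \in \cdot)$.

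The central step is to establish the tail asymptotic
\[
 \mu(\{2k\}) = \mathbb{P}^{0}(Y_{1}^{\alpha} = 2k) \sim K |k|^{-\alpha-1} \quad \text{as } |k| \to \infty,
\]
for some constant $K > 0$. For this I would use the excursion representation $Y_{1}^{\alpha} = Z_{1} + Z_{2} + \cdots + Z_{T}$, where (conditionally on $T$) the $Z_{i}$ are independent, $Z_{1} \sim f_{(\alpha, c)}$, $Z_{i} \sim f_{(\beta, d)}$ for $i \geq 2$, and $T$ is the first $n \geq 1$ with $Z_{1} + \cdots + Z_{n} \in 2\ZZ$. The stopping time $T$ has exponentially bounded tails: conditionally on $T \geq 2$, the variable $T - 1$ is geometric with success probability $\sum_{j \in 2\ZZ + 1} f_{(\beta, d)}(j) < 1$. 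Since $\alpha \leq \beta$, the tail of $Z_{1}$ (of index $\alpha + 1$) is heavier than those of $Z_{2}, Z_{3}, \ldots$ (of index $\beta + 1$), and a standard single-big-jump argument for sums of independent heavy-tailed variables yields the asymptotic above.

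Given the tail asymptotic, Proposition \ref{p5.2} places $\chain{Y^{\alpha}}$ in the domain of normal attraction of an $S\alpha S$ distribution. Since $\alpha < 1$, transience of $\chain{Y^{\alpha}}$ then follows from the Chung--Fuchs integral test: the asymptotic gives $1 - \phi_{\mu}(\xi) \asymp |\xi|^{\alpha}$ as $\xi \to 0$, with $\phi_{\mu}$ real by symmetry, so $\int_{0}^{\pi} (1 - \phi_{\mu}(\xi))^{-1} d\xi < \infty$ when $\alpha < 1$. By Proposition \ref{p5.6}, transience of $\chain{Y^{\alpha}}$ transfers to $\chain{X^{dp}}$.

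The main obstacle is the second step, the precise tail asymptotic for $\mu(\{2k\})$. A cheap lower bound $\mu(\{j : |j| > t\}) \geq c\, t^{-\alpha}$ is immediate from the $T = 1$ contribution (where $Y_{1}^{\alpha} = Z_{1}$), but one needs the matching upper bound for a genuine asymptotic of index $-\alpha-1$. For this one must sum over $T \geq 2$ and control the contribution of each $Z_{i}$ uniformly; the exponential tail of $T$ makes this summation converge and ensures that the heavier tail of $Z_{1}$ indeed dominates in the limit.
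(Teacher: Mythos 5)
Your overall skeleton --- reduce to the embedded walk $\chain{Y^{\alpha}}$ via Proposition \ref{p5.6}, use Proposition \ref{p5.7} to see it is a symmetric random walk, and conclude transience for $\alpha<1$ by the Chung--Fuchs integral test --- is exactly the paper's route. But your proof as written has a genuine gap: the step you yourself flag as ``the main obstacle,'' namely the local tail asymptotic $\mu(\{2k\})\sim K|k|^{-\alpha-1}$, is never established. The single-big-jump sketch does not go through as stated: when $\alpha=\beta$ the tail of $Z_1$ is \emph{not} heavier than those of $Z_2,Z_3,\dots$ (same index), so the dominance argument fails and the limiting constant would have to collect contributions from every summand; moreover you need a \emph{local} (pointwise) version of the single-big-jump principle, not just the integrated-tail version, and that requires additional regularity hypotheses on the $f_{(\alpha,c)}$, $f_{(\beta,d)}$ beyond what is assumed. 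The detour through Proposition \ref{p5.2} and domains of attraction is likewise doing no work for you.

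The fix --- and this is what the paper does --- is to notice that for \emph{transience} you never need the matching upper bound on the tail of $\mu$, only the lower bound you already call ``cheap.'' The Chung--Fuchs test requires $\int_0^{\pi}\mathrm{Re}\bigl((1-\varphi(\xi))^{-1}\bigr)\,d\xi<\infty$; since $\varphi$ is real by symmetry and
$$1-\varphi(\xi)=\sum_{j\in\ZZ}(1-\cos(2j\xi))\,\mathbb{P}^{0}(Y_1^{\alpha}=2j)\geq\sum_{j\in\ZZ}(1-\cos(2j\xi))\,f_{(\alpha,c)}(2j),$$
where the inequality uses $1-\cos\geq0$ together with the one-step bound $\mathbb{P}^{0}(Y_1^{\alpha}=2i)\geq f_{(\alpha,c)}(2i)$ (the $T=1$ contribution), you get
$$\mathrm{Re}\left(\frac{1}{1-\varphi(\xi)}\right)\leq\frac{1}{\sum_{j\in\ZZ}(1-\cos(2j\xi))f_{(\alpha,c)}(2j)}.$$
The right-hand denominator is the ``$1-\widehat{\mu}$'' of a fixed symmetric sub-probability measure with exact tail index $\alpha$, and the standard Spitzer-type estimate gives that it is bounded below by a constant times $|\xi|^{\alpha}$ near $0$; hence the integral converges precisely because $\alpha<1$. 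This removes the unproven asymptotic entirely and with it the $\alpha=\beta$ difficulty. If you restructure your argument around this monotonicity observation, the rest of what you wrote stands.
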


\begin{proof}  Without loss of generality, let us suppose that $\alpha\wedge\beta=\alpha<1$. By Proposition \ref{p5.6}, it is enough to prove that the chain $\chain{Y^{\alpha}}$ is transient.
From Proposition \ref{p5.7} we know that the chain
$\chain{Y^{\alpha}}$ is symmetric random walk on $2\ZZ$ with respect
to the probability measure $\mathbb{P}^{0}(\cdot).$  For every
$i\in\ZZ$ we have
$$\mathbb{P}^{0}(Y_1^{\alpha}=2i)=p(0,2i)+\displaystyle\sum_{j\in2\ZZ+1}p(0,j)p(j,2i)+\ldots\geq
f_{(\alpha,c)}(2i).$$ Let $\varphi(\xi)$ be the characteristic
function of the distribution $\mathbb{P}^{0}(Y_1^{\alpha}\in\cdot)$.
From the symmetry property of the distribution
$\mathbb{P}^{0}(Y_1^{\alpha}\in\cdot)$, we have
                                                   $$\mathrm{Re}\left(\frac{1}{1-\varphi(\xi)}\right)=\frac{1}{\displaystyle\sum_{j\in\ZZ}(1-\cos (2j\xi))\mathbb{P}^{0}(Y_1^{\alpha}=2j)}\leq\frac{1}{\displaystyle\sum_{j\in\ZZ}(1-\cos (2j\xi))f_{(\alpha,c)}(2j)}.$$
                                                    Note that $\sum_{j\in\ZZ}\cos (2j\xi)f_{(\alpha,c)}(2j)$
                 is                                   the  Fourier
                 transform
                                                    of the symmetric
                                                    sub-probability
                                                    measure on
                                                    $2\ZZ$. Using
                                                    completely the
                                                    same arguments as
                                                    in \cite[page 88]{spitzer}, from
\cite[Theorem 3.2.9]{durrett} we
                                                    get the desired result.
 \end{proof}
Let $m\geq1$, $\alpha_0,\ldots,\alpha_{m-1}\in(0,2)$ and
$c_0,\ldots,c_{m-1}\in(0,\infty)$ be arbitrary. Let $\chain{X^{dp}}$
be a Markov chain on $\ZZ$ given by
$$\alpha(i)=\alpha_j\quad \textrm{and} \quad c(i)=c_j$$ for $i\equiv j\,  \mathrm{mod}\,(m)$, i.e., the
functions $\alpha:\ZZ\longrightarrow(0,2)$ and
$c:\ZZ\longrightarrow(0,\infty)$ are periodic functions with period
$m$. Furthermore, let us suppose that probability functions
$f_{(\alpha_i,c_i)}(j)$, $i=0,\dots,m-1$, satisfy
$f_{(\alpha_i,c_i)}(-j)=f_{(\alpha_i,c_i)}(j)$ for all $j\in\ZZ$ and
 $i=0,\dots,m-1$. Then, it is not hard to prove that Propositions \ref{p5.4},
\ref{p5.5}, \ref{p5.6} and \ref{p5.7}, except perhaps the symmetry
property of related chains (random walks) $\chain{Y^{\alpha_i}}$,
$i=0,\dots,m,$ are also valid in this periodic case. Therefore,
analogously as in Proposition \ref{p5.8} using
$$\mathrm{Re}
                                                    \left(\frac{1}{1-z}\right)=\frac{1-a}{(1-a)^{2}+b^{2}}\leq\frac{1}{1-a}$$  for
all
                                                    $z=a+ib\in\CC$
                                                    such that
                                                     $|z|\leq1$, we have:
\begin{theorem}\label{t5.9} If
$\alpha_0\wedge\alpha_1\wedge\dots\wedge\alpha_{m-1}<1$, then the
chain $\chain{X^{dp}}$ is transient.
\end{theorem}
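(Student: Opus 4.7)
Without loss of generality, assume $\alpha_0=\alpha_0\wedge\alpha_1\wedge\cdots\wedge\alpha_{m-1}<1$. The strategy is exactly the one used in Proposition \ref{p5.8}, the only new ingredient being that the induced ``skeleton'' chain is no longer guaranteed to be symmetric once $m\geq3$. First I invoke the periodic extensions of Propositions \ref{p5.4} and \ref{p5.6}: the stopping times $T_n^{\alpha_0}:=\inf\{k>T^{\alpha_0}_{n-1}:X^{dp}_k\in m\ZZ\}$ are a.s.\ finite, the trace chain $Y^{\alpha_0}_n:=X^{dp}_{T_n^{\alpha_0}}$ is a well-defined irreducible Markov chain on $m\ZZ$, and $\chain{X^{dp}}$ is recurrent if and only if $\chain{Y^{\alpha_0}}$ is recurrent. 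By the periodic extension of Proposition \ref{p5.7}, $\chain{Y^{\alpha_0}}$ is a random walk on $m\ZZ$ with step distribution $\mu(\cdot):=\mathbb{P}^{0}(Y^{\alpha_0}_1\in\cdot)$. Thus the task reduces to showing $\chain{Y^{\alpha_0}}$ is transient as a random walk.

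To do so I apply the Chung--Fuchs criterion \cite[Theorem 3.2.9]{durrett}: it suffices to prove that
\[
\int_{-\pi/m}^{\pi/m}\mathrm{Re}\!\left(\frac{1}{1-\varphi(\xi)}\right)d\xi<\infty,
\]
where $\varphi(\xi)=\sum_{j\in\ZZ}e^{ijm\xi}\mu(\{jm\})$ is the characteristic function of the step distribution. Since $|\varphi(\xi)|\leq1$, writing $\varphi(\xi)=a(\xi)+ib(\xi)$ and using the bound
\[
\mathrm{Re}\!\left(\frac{1}{1-\varphi(\xi)}\right)=\frac{1-a(\xi)}{(1-a(\xi))^{2}+b(\xi)^{2}}\leq\frac{1}{1-a(\xi)}=\frac{1}{\sum_{j\in\ZZ}(1-\cos(jm\xi))\mu(\{jm\})}
\]
supplied in the statement, I can work with the real part alone (this is where the possible lack of symmetry is absorbed).

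Next I obtain a lower bound on $\mu(\{jm\})$ exactly as in Proposition \ref{p5.8}: isolating the direct one-step transition,
\[
\mu(\{jm\})=\mathbb{P}^{0}(Y_1^{\alpha_0}=jm)\geq p(0,jm)=f_{(\alpha_0,c_0)}(jm).
\]
Consequently
\[
\mathrm{Re}\!\left(\frac{1}{1-\varphi(\xi)}\right)\leq\frac{1}{\sum_{j\in\ZZ}(1-\cos(jm\xi))f_{(\alpha_0,c_0)}(jm)}.
\]
The denominator is (up to a factor) $1-\psi(\xi)$, where $\psi$ is the Fourier transform of the symmetric sub-probability measure on $m\ZZ$ with mass $f_{(\alpha_0,c_0)}(jm)$ at $jm$. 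Because $f_{(\alpha_0,c_0)}(j)\sim c_0|j|^{-\alpha_0-1}$ and $\alpha_0<1$, the classical Tauberian estimate (see \cite[page 88]{spitzer}) yields $1-\psi(\xi)\asymp|\xi|^{\alpha_0}$ as $\xi\to0$. Hence the integrand is $O(|\xi|^{-\alpha_0})$ near the origin, and since $\alpha_0<1$ the integral converges. Away from the origin the denominator is bounded below by a positive constant by irreducibility. By Chung--Fuchs, $\chain{Y^{\alpha_0}}$ is transient, and therefore $\chain{X^{dp}}$ is transient.

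The main technical obstacle is the loss of symmetry for the trace chain when $m\geq3$; this is precisely why the statement comes equipped with the complex-analytic inequality $\mathrm{Re}(1/(1-z))\leq1/(1-a)$, which lets the symmetric-case Spitzer estimate be reused on the real part only. Everything else is a routine adaptation of Proposition \ref{p5.8}.
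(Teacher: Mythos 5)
Your proposal is correct and follows essentially the same route as the paper: extend Propositions \ref{p5.4}--\ref{p5.7} to the period-$m$ trace chain on $m\ZZ$ (losing only symmetry), lower-bound the step distribution by the one-step term $f_{(\alpha_0,c_0)}(jm)$, and conclude transience via the Chung--Fuchs criterion using the inequality $\mathrm{Re}(1/(1-z))\leq 1/(1-a)$ to handle the possibly asymmetric characteristic function. This is precisely the argument the paper sketches, so there is nothing further to add.
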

Clearly, the above statement should be an if and only if statement,
i.e., there is no reason not to believe that
$\alpha_0\wedge\alpha_1\wedge\dots\wedge\alpha_{m-1}=1$ implies
recurrence of the chain $\chain{X^{dp}}$. But this case is not
covered by \cite{ja} and, again, it seems to be much more
complicated.

\section*{Acknowledgement} The author would like to thank Prof. Zoran
Vondra\v{c}ek   for many discussions on the topic and for helpful
comments on the presentation of the results.

\bibliographystyle{alpha}
\bibliography{References}

\end{document}